\newtheorem{theorem}{Theorem}[section]
\newtheorem{lemma}{Lemma}[section]
\newtheorem{proposition}{Proposition}[section]
\newtheorem{corollary}{Corollary}[section]
\newenvironment{proof}
      {\medskip\noindent{\bf Proof:}\hspace{1mm}}
      {\hfill$\Box$\medskip}
\def\Ddots{\mathinner{\mkern1mu\raise\p@
\vbox{\kern7\p@\hbox{.}}\mkern2mu
\raise4\p@\hbox{.}\mkern2mu\raise7\p@\hbox{.}\mkern1mu}}
\def\d{\delta}
\def\e{\epsilon}
\newcommand{\abs}[1]{\left\lvert#1\right\rvert}
\newcommand{\CC}{\mathbb{C}}
\title{\vspace{-0.7cm}Graph removal lemmas}
\author{David Conlon\thanks{Mathematical Institute, Oxford OX1 3LB, United Kingdom. Email: {\tt david.conlon@maths.ox.ac.uk}. Supported by a Royal Society University Research Fellowship.} \and Jacob Fox\thanks{Department of Mathematics, MIT, Cambridge,
MA 02139-4307. Email: {\tt fox@math.mit.edu}. Supported by a Simons Fellowship and NSF Grant DMS-1069197.}}
\date{}
\begin{document}
\maketitle

\begin{abstract}
The graph removal lemma states that any graph on $n$ vertices with $o(n^{v(H)})$ copies of a fixed graph $H$ may be made $H$-free by removing $o(n^2)$ edges. Despite its innocent appearance, this lemma and its extensions have several important consequences in number theory, discrete geometry, graph theory and computer science. In this survey we discuss these lemmas, focusing in particular on recent improvements to their quantitative aspects.
\end{abstract}

\section{Introduction}

The triangle removal lemma states that for all $\e > 0$ there exists $\d > 0$ such that any graph on $n$ vertices with at most $\d n^3$ triangles may be made triangle-free by removing at most $\e n^2$ edges. This result, proved by Ruzsa and Szemer\'edi \cite{RuSz} in 1976, was originally stated in rather different language. 

The original formulation was in terms of the $(6,3)$-problem.\footnote{The two results are not exactly equivalent, though the triangle removal lemma may be proved by their method. A weak form of the triangle removal lemma, already sufficient for proving Roth's theorem, is equivalent to the Ruzsa-Szemer\'edi theorem. This weaker form states that any graph on $n$ vertices in which every edge is contained in exactly one triangle has $o(n^2)$ edges. This is also equivalent to another attractive formulation, known as the induced matching theorem. This states that any graph on $n$ vertices which is the union of at most $n$ induced matchings has $o(n^2)$ edges.} This asks for the maximum number of edges $f^{(3)}(n, 6, 3)$ in a $3$-uniform hypergraph on $n$ vertices such that no $6$ vertices contain $3$ edges. Answering a question of Brown, Erd\H{o}s and S\'os \cite{BES73}, Ruzsa and Szemer\'edi showed that $f^{(3)}(n, 6, 3) = o(n^2)$. Their proof used several iterations of an early version of Szemer\'edi's regularity lemma \cite{Sz76}. 

This result, developed by Szemer\'edi in his proof of the Erd\H{o}s-Tur\'an conjecture on arithmetic progressions in dense sets \cite{Sz1}, states that every graph may be partitioned into a small number of vertex sets so that the graph between almost every pair of vertex sets is random-like. Though this result now occupies a central position in graph theory, its importance only emerged over time. The resolution of the $(6,3)$-problem was one of the first indications of its strength. 

The Ruzsa-Szemer\'edi theorem was generalized by Erd\H{o}s, Frankl and R\"odl \cite{EFR86}, who showed that $f^{(r)}(n, 3r-3,3) = o(n^2)$, where $f^{(r)}(n, 3r-3,3)$ is the maximum number of edges in an $r$-uniform hypergraph such that no $3r-3$ vertices contain $3$ edges. One of the tools used by Erd\H{o}s, Frankl and R\"odl in their proof was a striking result stating that if a graph on $n$ vertices contains no copy of a graph $H$ then it may be made $K_r$-free, where $r = \chi(H)$ is the chromatic number of $H$, by removing $o(n^2)$ edges. The proof of this result used the modern formulation of Szemer\'edi's regularity lemma and is already very close, both in proof and statement, to the following generalization of the triangle removal lemma, known as the graph removal lemma.\footnote{The phrase `removal lemma' is a comparatively recent coinage. It seems to have come into vogue in about 2005 when the hypergraph removal lemma was first proved (see, for example, \cite{KNRSS05, NRS06, So05, T062}).} This was first stated explicitly in the literature by Alon, Duke, Lefmann, R\"odl and Yuster \cite{ADLRY94} and by F\"uredi \cite{Fu95} in 1994.\footnote{This was also the first time that the triangle removal lemma was stated explicitly, though the weaker version concerning graphs where every edge is contained in exactly one triangle had already appeared in the literature. The Ruzsa-Szemer\'edi theorem was usually \cite{FF87, FGR87, Fu92} phrased in the following suggestive form: if a $3$-uniform hypergraph is linear, that is, no two edges intersect on more than a single vertex, and triangle-free, then it has $o(n^2)$ edges. A more explicit formulation may be found in \cite{CEMcCSz91}.} Note that we use $v(H)$ to denote the number of vertices in a graph (or hypergraph) $H$.

\begin{theorem} \label{thm:graphremoval}
For any graph $H$ and any $\e > 0$, there exists $\d > 0$ such that any graph on $n$ vertices which contains at most $\d n^{v(H)}$ copies of $H$ may be made $H$-free by removing at most $\e n^2$ edges. 
\end{theorem}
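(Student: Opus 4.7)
The plan is to deduce Theorem~\ref{thm:graphremoval} from Szemer\'edi's regularity lemma by the standard three-step ``regularity + cleaning + counting'' argument. Fix $H$ on $h = v(H)$ vertices and $\e > 0$. I will choose an auxiliary density threshold $d = \e/4$ and an auxiliary regularity parameter $\e' \ll d$ small in terms of $h$ and $\e$, to be pinned down at the end.

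First, I apply Szemer\'edi's regularity lemma to $G$ with parameter $\e'$ to obtain an equitable partition $V(G) = V_1 \cup \cdots \cup V_M$ such that all but at most $\e' M^2$ pairs $(V_i, V_j)$ are $\e'$-regular, where $M \le M(\e')$ is bounded by a tower function of $1/\e'$. Next I perform the cleaning step: define a subgraph $G' \subseteq G$ by deleting every edge of $G$ that lies (i) inside some part $V_i$, (ii) in an irregular pair, or (iii) in a regular pair $(V_i, V_j)$ of density less than $d$. A direct count shows these three families contribute at most $n^2/M + \e' n^2 + d n^2$ edges, which, for $M$ large and $\e'$ small, is at most $\e n^2$. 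Thus it suffices to show $G'$ is $H$-free whenever $G$ contains fewer than $\d n^{h}$ copies of $H$.

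The heart of the argument is the contrapositive of this last implication, the embedding/counting lemma: if $G'$ contains even one copy of $H$, then $G$ contains at least $\d n^{h}$ copies of $H$. Suppose $H$ embeds in $G'$ via a map $\phi$, and let $V_{i_1}, \ldots, V_{i_h}$ be the parts hosting $\phi(1), \ldots, \phi(h)$ (with repetitions allowed). By construction every pair of parts used for an edge of $H$ is $\e'$-regular of density at least $d$. I now embed the vertices of $H$ greedily, one at a time, into $V_{i_1}, \ldots, V_{i_h}$: at each step, I use the defining property of $\e'$-regularity to show that, apart from at most $\e'|V_{i_j}|$ ``bad'' vertices, a positive fraction (at least $(d-\e')^{\deg}$) of the candidates in $V_{i_j}$ have the required neighborhoods in the previously embedded parts. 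Provided $\e' \ll d^{h}$, this produces at least $(d/2)^{e(H)} (n/M)^{h}$ copies of $H$ in $G$, which is at least $\d n^{h}$ once $\d$ is chosen small enough relative to $d$, $h$, and $M$.

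The main obstacle is the inductive execution of the embedding lemma: one must carry forward, at each stage, a lower bound on the size of the ``still good'' subset of each unused part, and verify that the regularity condition continues to apply to these shrinking subsets. This forces the quantitative choice $\e' \le d^{h}/(2h)$ or thereabouts, and it is what ultimately makes $\d$ depend on $\e$ through a tower function of height polynomial in $h$ and $\log(1/\e)$---the infamous bound whose improvement is one of the central themes of the survey.
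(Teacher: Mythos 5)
Your proposal is correct and follows essentially the same route as the paper's own proof in Section~\ref{sec:usualremoval}: apply Szemer\'edi's regularity lemma, delete edges in irregular or low-density pairs (and, in your version, inside parts), and invoke the greedy-embedding counting lemma to show that any surviving copy of $H$ forces $\delta n^{v(H)}$ copies in $G$. The only cosmetic difference is that the paper handles vertices of $H$ landing in a common part via a counting lemma for not-necessarily-disjoint sets rather than by deleting internal edges; both devices are standard and equivalent here.
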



It was already observed by Ruzsa and Szemer\'edi that the $(6,3)$-problem (and, thereby, the triangle removal lemma) is related to Roth's theorem on arithmetic progressions \cite{Ro}. This theorem states that for any $\delta > 0$ there exists an $n_0$ such that if $n \geq n_0$ then any subset of the set $[n] := \{1, 2, \dots, n\}$ of size at least $\delta n$ contains an arithmetic progression of length $3$. Letting $r_3(n)$ be the largest integer such that there exists a subset of the set $\{1, 2, \dots, n\}$ of size $r_3(n)$ containing no arithmetic progression of length $3$, this is equivalent to saying that $r_3(n) = o(n)$. Ruzsa and Szemer\'edi observed that $f^{(3)}(n,6,3) = \Omega(r_3(n) n)$. In particular, since $f^{(3)}(n,6,3) = o(n^2)$, this implies that $r_3(n) = o(n)$, yielding a proof of Roth's theorem.

It was further noted by Solymosi \cite{So} that the Ruzsa-Szemer\'edi theorem yields a stronger result of Ajtai and Szemer\'edi \cite{AjSz}. This result states that for any $\delta > 0$ there exists an $n_0$ such that if $n \geq n_0$ then any subset of the set $[n] \times [n]$ of size at least $\delta n^2$ contains a set of the form $\{(a, b), (a+d, b), (a, b+d)\}$. That is, dense subsets of the $2$-dimensional grid contain axis-parallel isosceles triangles. Roth's theorem is a simple corollary of this statement.

Roth's theorem is the first case of a famous result known as Szemer\'edi's theorem. This result, to which we alluded earlier, states that for any natural number $k \geq 3$ and any $\delta > 0$ there exists $n_0$ such that if $n \geq n_0$ then any subset of the set $[n]$ of size at least $\delta n$ contains an arithmetic progression of length $k$. This was first proved by Szemer\'edi \cite{Sz1} in the early seventies using combinatorial techniques and since then several further proofs have emerged. The most important of these are that by Furstenberg \cite{F77, FKO82} using ergodic theory and that by Gowers \cite{G98, G01}, who found a way to extend Roth's original Fourier analytic argument to general $k$. Both of these methods have been highly influential. 

Yet another proof technique was suggested by Frankl and R\"odl \cite{FR02}. They showed that Szemer\'edi's theorem would follow from the following generalization of Theorem~\ref{thm:graphremoval}, referred to as the hypergraph removal lemma. They proved this theorem for the specific case of $K_4^{(3)}$, the complete $3$-uniform hypergraph with $4$ vertices. This was then extended to all $3$-uniform hypergraphs in~\cite{NR03} and to $K_5^{(4)}$ in~\cite{RSk05}. Finally, it was proved for all hypergraphs by Gowers \cite{G06, G07} and, independently, by Nagle, R\"odl, Schacht and Skokan \cite{NRS06, RSk04}. Both proofs rely on extending Szemer\'edi's regularity lemma to hypergraphs in an appropriate fashion. 

\begin{theorem} \label{thm:hyperremoval}
For any $k$-uniform hypergraph $\mathcal{H}$ and any $\e > 0$, there exists $\d > 0$ such that any $k$-uniform hypergraph on $n$ vertices which contains at most $\d n^{v(\mathcal{H})}$ copies of $\mathcal{H}$ may be made $\mathcal{H}$-free by removing at most $\e n^k$ edges. 
\end{theorem}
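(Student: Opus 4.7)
The plan is to follow the same three-step template used for the graph removal lemma (regularize, clean, count), but carried out within the hypergraph regularity framework of Gowers and of Nagle--R\"odl--Schacht--Skokan. Throughout, let $\mathcal{G}$ be a $k$-uniform hypergraph on $n$ vertices with at most $\d n^{v(\mathcal{H})}$ copies of $\mathcal{H}$, and fix $\e > 0$. We aim to show that for $\d$ small enough (depending on $\e$ and $\mathcal{H}$), one can remove at most $\e n^k$ edges of $\mathcal{G}$ and destroy every copy of $\mathcal{H}$.

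First, I would apply a strong form of the hypergraph regularity lemma to $\mathcal{G}$. Unlike the graph case, this is not a single partition of the vertex set; rather, one produces a nested family of partitions $\mathcal{P}_1, \mathcal{P}_2, \dots, \mathcal{P}_{k-1}$, where $\mathcal{P}_j$ partitions the $j$-element subsets of $V(\mathcal{G})$ so that $\mathcal{P}_j$ refines the partition induced on $j$-sets by $\mathcal{P}_{j-1}$. The output guarantees that, with respect to the \emph{polyads} built from these partitions, the edge set of $\mathcal{G}$ is $\eta$-regular on all but an $\eta$-fraction of the polyads, where $\eta$ is a function of $\e$ and $\mathcal{H}$ chosen by the counting lemma below. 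The parameters are delicate: $\eta$ must be polynomially small in the densities of the lower-level partitions, and the whole hierarchy must be built with this in mind.

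Next, I would clean $\mathcal{G}$ by deleting edges of three kinds: (i) edges that lie in a polyad on which $\mathcal{G}$ is not $\eta$-regular; (ii) edges in polyads of density less than some $\alpha = \alpha(\e, \mathcal{H})$; and (iii) edges whose supporting polyad at level $k-1$ has abnormally small underlying density (so that typical counts are reliable). A standard pigeonhole and edge-counting argument shows that each category accounts for at most $(\e/3) n^k$ edges, so in total at most $\e n^k$ edges are removed. Call the resulting hypergraph $\mathcal{G}'$.

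Finally, suppose for contradiction that $\mathcal{G}'$ still contains a copy of $\mathcal{H}$. Its vertices are distributed across some $v(\mathcal{H})$ classes of $\mathcal{P}_1$, and its edges are each supported by a $\eta$-regular polyad of density at least $\alpha$. Here I would invoke the hypergraph counting lemma (the difficult companion to hypergraph regularity), which states that whenever one has such a regular, sufficiently dense configuration of polyads, the number of copies of $\mathcal{H}$ they contain is at least a constant fraction of the product of the relevant densities and the sizes of the vertex classes. This yields at least $c(\e, \mathcal{H}) n^{v(\mathcal{H})}$ copies of $\mathcal{H}$ in $\mathcal{G}$ itself. Choosing $\d < c(\e, \mathcal{H})$ then contradicts the hypothesis, completing the proof.

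The main obstacle, and the reason this result took a decade longer than its graph analogue, is the counting lemma. Straightforward analogues of Szemer\'edi's notion of regularity fail to support any counting for $k \ge 3$: one really needs to regularize the $(k-1)$-uniform scaffolding beneath $\mathcal{G}$ and to control the interaction between the different levels. Formulating the regularity hierarchy so that a counting lemma can actually be proved, and then proving it by induction on $k$, is the crux of the Gowers and NRSS papers cited above; once both the regularity lemma and its matching counting lemma are in hand, the argument sketched here is the routine removal-lemma reduction.
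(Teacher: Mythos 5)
The survey does not prove Theorem~\ref{thm:hyperremoval} at all: it is stated as background and attributed to Gowers \cite{G06, G07} and to Nagle, R\"odl, Schacht and Skokan \cite{NRS06, RSk04}, so there is no in-paper argument to compare yours against. Your sketch is a faithful outline of the regularize--clean--count reduction that those papers carry out, and you correctly locate the entire difficulty in the two ingredients you black-box: a regularity lemma for the nested hierarchy of partitions of $j$-sets, $1 \le j \le k-1$, with the regularity parameter at the top level allowed to depend on (and be much smaller than) the densities of the lower levels, and the matching counting lemma. As a proof, however, the proposal is not self-contained, and the part you defer is not a routine verification but the content of two long papers; you should be explicit that you are reducing the theorem to two precisely stated external results rather than proving it. Two smaller points where the sketch is too casual: (i) in the cleaning step, the claim that irregular polyads account for at most $(\e/3)n^k$ edges requires the regularity lemma to be stated in a form where the irregular polyads carry only an $\eta$-fraction of all $k$-sets (not merely that an $\eta$-fraction of polyads are irregular, since polyads can have very different sizes); and (ii) in the counting step, a surviving copy of $\mathcal{H}$ does not just place each edge in some dense regular polyad in isolation --- one must check that it determines a \emph{compatible} system of cells and polyads across all edges of $\mathcal{H}$ (agreeing on shared vertices and shared lower-level faces), which is the configuration to which the counting lemma actually applies. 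Both points are standard, but they are exactly the places where a naive transcription of the graph argument breaks down for $k \ge 3$.
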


As well as reproving Szemer\'edi's theorem, the hypergraph removal lemma allows one to reprove the multidimensional Szemer\'edi theorem. This theorem, originally proved by Furstenberg and Katznelson \cite{FK78}, states that for any natural number $r$, any finite subset $S$ of $\mathbb{Z}^r$ and any $\delta > 0$ there exists $n_0$ such that if $n \geq n_0$ then any subset of $[n]^r$ of size at least $\delta n^r$ contains a subset of the form $a\cdot S + d$, that is, a dilated and translated copy of $S$. That it follows from the hypergraph removal lemma was first observed by Solymosi \cite{So04}. This was the first non-ergodic proof of this theorem. A new proof of the special case $S=\{(0,0), (1,0), (0,1)\}$, corresponding to the Ajtai-Szemer\'edi theorem,  was given by Shkredov \cite{Shk06} using a Fourier analytic argument. Recently, a combinatorial proof of the density Hales-Jewett theorem, which is an extension of the multidimensional Szemer\'edi theorem, was discovered as part of the polymath project \cite{polymath}. 

As well as its implications in number theory, the removal lemma and its extensions are central to the area of computer science known as property testing. In this area, one would like to find fast algorithms to distinguish between objects which satisfy a certain property and objects which are far from satisfying that property. This field of study was initiated by Rubinfield and Sudan \cite{RuSu} and, subsequently, Goldreich, Goldwasser and Ron \cite{GGR} started the investigation of such property testers for combinatorial objects. Graph property testing has attracted a particular degree of interest. 

A classic example of property testing is to decide whether a given graph $G$ is $\e$-far from being triangle-free, that is, whether at least $\e n^2$ edges will have to removed in order to make it triangle-free. The triangle removal lemma tells us that if $G$ is $\e$-far from being triangle free then it must contain at least $\d n^3$ triangles for some $\d > 0$ depending only on $\e$. This furnishes a simple probabilistic algorithm for deciding whether $G$ is $\e$-far from being triangle-free. We choose $t = 2 \delta^{-1}$ triples of points from the vertices of $G$ uniformly at random. If $G$ is $\e$-far from being triangle-free then the probability that none of these randomly chosen triples is a triangle is $(1 - \delta)^t < e^{-t \delta} < \frac{1}{3}$. That is, if $G$ is $\e$-far from being triangle-free we will find a triangle with probability at least $\frac{2}{3}$, whereas if $G$ is triangle-free we will clearly find no triangles. The graph removal lemma may be used to derive a similar test for deciding whether $G$ is $\e$-far from being $H$-free for any fixed graph $H$.

In property testing, it is often of interest to decide not only whether a graph is far from being $H$-free but also whether it is far from being induced $H$-free. A subgraph $H'$ of a graph $G$ is said to be an induced copy of $H$ if there is a one-to-one map $f: V(H) \rightarrow V(H')$ such that $(f(u), f(v))$ is an edge of $H'$ if and only if $(u, v)$ is an edge of $H$. A graph $G$ is said to be induced $H$-free if it contains no induced copies of $H$ and $\e$-far from being induced $H$-free if we have to add and/or delete at least $\e n^2$ edges to make it induced $H$-free. Note that it is not enough to delete edges since, for example, if $H$ is the empty graph on two vertices and $G$ is the complete graph minus an edge, then $G$ contains only one induced copy of $H$, but one cannot simply delete edges from $G$ to make it induced $H$-free.

By proving an appropriate strengthening of the regularity lemma, Alon, Fischer, Krivelevich and Szegedy \cite{AFKS} showed how to modify the graph removal lemma to this setting. This result, which allows one to test for induced $H$-freeness, is known as the induced removal lemma.

\begin{theorem} \label{thm:inducedremoval}
For any graph $H$ and any $\e > 0$, there exists a $\d > 0$ such that any graph on $n$ vertices which contains at most $\d n^{v(H)}$ induced copies of $H$ may be made induced $H$-free by adding and/or deleting at most $\e n^2$ edges. 
\end{theorem}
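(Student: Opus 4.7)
The plan is to follow the Alon--Fischer--Krivelevich--Szegedy approach by first proving and then applying a strengthened version of Szemerédi's regularity lemma. The ordinary lemma is insufficient here because a single pair of density, say, $1/2$ contains roughly $(n/k)^2/2$ edges, which is far too many to either delete wholesale or fill in within the budget of $\e n^2$ edge modifications. The remedy is a pair of nested equipartitions $\mathcal{P} = \{V_1,\dots,V_k\}$ and $\mathcal{Q} = \{W_1,\dots,W_m\}$, with $\mathcal{Q}$ refining $\mathcal{P}$, such that $\mathcal{Q}$ is $\e'$-regular for some $\e' = \e'(k)$ arbitrarily small compared to $1/k$, and such that, for all but an $\e$-fraction of pairs $(W_a,W_b)$ with $W_a \subseteq V_i$, $W_b \subseteq V_j$, the density $d(W_a,W_b)$ lies within $\e$ of $d(V_i,V_j)$. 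Such a pair of partitions can be produced by iterating Szemerédi's lemma, using a mean-square-density potential to bound the number of iterations.

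Given $\mathcal{P} \prec \mathcal{Q}$, I would clean $G$ to a graph $G^*$ at the coarse scale. Fix a threshold $\tau$ with $\e' \ll \tau \ll \e$. For each pair of parts $(V_i,V_j)$ of $\mathcal{P}$ with $d(V_i,V_j) < \tau$, delete every edge of $G$ between them; for each pair with $d(V_i,V_j) > 1-\tau$, insert every missing edge; and leave all other pairs alone. In addition, delete the edges incident to the irregular $\mathcal{Q}$-pairs, to atypical $\mathcal{Q}$-sub-pairs whose density fails to track that of its $\mathcal{P}$-parent, and to the small exceptional vertex set. A direct count shows that the total number of edge modifications is at most $\e n^2$. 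If $G^*$ is already induced $H$-free, we are done.

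Otherwise, $G^*$ contains an induced copy of $H$ on vertices $x_1,\dots,x_h$ with $x_r \in V_{f(r)}$. Classify each pair $(V_{f(r)},V_{f(s)})$ as \emph{empty} (density $<\tau$), \emph{full} (density $>1-\tau$), or \emph{mixed}; by construction of $G^*$, an $H$-edge $rs$ can only be embedded into a full or mixed pair (with $x_r x_s \in E(G)$ in the mixed case), and similarly an $H$-non-edge only into an empty or mixed pair. Using the density consistency of $\mathcal{Q}$ inside $\mathcal{P}$, choose refined sub-parts $W_{g(r)} \subseteq V_{f(r)}$ so that each of the $\binom{h}{2}$ induced sub-pairs is $\e'$-regular with density within $\e$ of its coarse counterpart; then every sub-pair density is bounded away from $0$ and from $1$ in the direction compatible with the corresponding adjacency in $H$. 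The induced graph counting lemma now produces at least $c(H,\tau)\,|W|^h \geq \d n^{v(H)}$ induced copies of $H$ in $G$, provided $\d$ has been chosen sufficiently small, contradicting the hypothesis.

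The main obstacle is proving the strengthened regularity lemma, since the density consistency between $\mathcal{Q}$ and $\mathcal{P}$ does not follow from the ordinary lemma and requires its own energy-increment iteration applied on top of Szemerédi's. A more conceptual subtlety is the treatment of mixed pairs: one must verify that when $G^*$ locates an induced copy of $H$ that exploits such a pair, the corresponding density in $G$ is still bounded away from both $0$ and $1$, so that the induced counting lemma delivers many copies rather than just one.
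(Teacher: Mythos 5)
Your outline is the standard Alon--Fischer--Krivelevich--Szegedy argument and matches the proof in Section~\ref{sec:usualinduced}: iterate Szemer\'edi's lemma with a mean-square-density increment to get the nested pair $\mathcal{P} \prec \mathcal{Q}$ with the closeness property, clean at the coarse scale, and use the existence of a surviving induced copy to locate regular sub-parts whose densities are bounded away from $0$ (for edges of $H$) and from $1$ (for non-edges), whence the induced counting lemma yields $\geq \delta n^{v(H)}$ induced copies. Your worry about mixed pairs is in fact unproblematic: if $\tau \le d(V_i,V_j) \le 1-\tau$ then density-consistent sub-pairs have density in $[\tau-\e,\,1-\tau+\e]$, which is exactly what the counting lemma needs. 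The cleaning rule based on $d(V_i,V_j)$ rather than on the representative sub-pair densities is a harmless variant; either way the closeness of $\mathcal{Q}$ to $\mathcal{P}$ is what keeps the modification count at $O(\e n^2)$.

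There is one genuine gap: you never address the case in which two (or more) vertices of the induced copy of $H$ found in $G^*$ lie in the \emph{same} part $V_i$ of $\mathcal{P}$. This case cannot be excluded, and your argument breaks there in two places. First, the cleaning rule is only specified for pairs of distinct parts, so nothing controls the adjacencies realized inside a single $V_i$. Second, and more seriously, the counting step requires the chosen sub-parts hosting those two $H$-vertices to form a regular pair with controlled density; if you pick a single representative $W_i \subseteq V_i$ for both, you need $(W_i,W_i)$ to be regular with itself, which does not follow from Szemer\'edi regularity of $\mathcal{Q}$ (that only concerns distinct parts) and is precisely the point where extra work is required --- in the survey's Lemma~\ref{strongeasycor} the sets $W_i$ are arranged to be $f(k)$-regular with themselves, which costs an additional application of regularity inside each piece together with Ramsey's theorem. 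Alternatively, you could assign \emph{distinct} parts of $\mathcal{Q}$ inside $V_i$ to distinct $H$-vertices mapped there, but then you must also verify that the closeness condition covers the diagonal pairs $(V_i,V_i)$ so that these within-part sub-pair densities track $d(V_i,V_i)$, and you must specify the cleaning rule for edges inside each $V_i$ accordingly. Either fix works, but some version of it has to be supplied for the proof to be complete.
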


A substantial generalization of this result, known as the infinite removal lemma, was proved by Alon and Shapira \cite{AlSh08} (see also \cite{LS10}). They showed that for each (possibly infinite) family  $\mathcal{H}$ of graphs and $\epsilon>0$ there is $\delta=\delta_{\mathcal{H}}(\epsilon)>0$ such that 
if a graph $G$ on $n$ vertices contains at most $\delta n^{v(H)}$ induced copies of $H$ for every graph $H$ in $\mathcal{H}$, then $G$ may be made induced $H$-free, for every $H \in \mathcal{H}$, by adding and/or deleting at most $\epsilon n^2$ edges. They then used this result to show that every hereditary graph property is testable, where a graph property is hereditary if it is closed under removal of vertices. These results were extended to $3$-uniform hypergraphs by Avart, R\"odl and Schacht \cite{ARS07} and to $k$-uniform hypergraphs by R\"odl and Schacht \cite{RS09}.

In this survey we will focus on recent developments, particularly with regard to the quantitative aspects of the removal lemma. In particular, we will discuss recent improvements on the bounds for the graph removal lemma, Theorem~\ref{thm:graphremoval}, and the induced graph removal lemma, Theorem~\ref{thm:inducedremoval}, each of which bypasses a natural impediment. 

The usual proof of the graph removal lemma makes use of the regularity lemma and gives bounds for the removal lemma which are of tower-type in $\e$. To be more specific, let $T(1) = 2$ and, for each $i \geq 1$, $T(i+1) = 2^{T(i)}$. The bounds that come out of applying the regularity lemma to removal then say that if $\delta^{-1} = T(\epsilon^{-c_H})$ then any graph with at most $\d n^{v(H)}$ copies of $H$ may be made $H$-free by removing at most $\e n^2$ edges. Moreover, this tower-type dependency is inherent in any proof employing regularity. This follows from an important result of Gowers \cite{G97} (see also \cite{CF12}) which states that the bounds that arise in the regularity lemma are necessarily of tower type. We will discuss this in more detail in Section~\ref{sec:usualremoval} below. 

Despite this obstacle, the following improvement was made by Fox \cite{F11}.

\begin{theorem}\label{thm:removalimproval}
For any graph $H$, there exists a constant $a_H$ such that if $\d^{-1} = T(a_H \log \e^{-1})$ then any graph on $n$ vertices which contains at most $\d n^{v(H)}$ copies of $H$ may be made $H$-free by removing at most $\e n^2$ edges. 
\end{theorem}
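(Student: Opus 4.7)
The plan is to sidestep the usual regularity-based proof, whose tower height $T(\epsilon^{-c_H})$ is essentially forced by the known tower lower bound for Szemer\'edi's regularity lemma itself. Instead, I would build copies of $H$ directly by an iterative embedding in which each step produces only a single exponential blow-up, rather than a full tower, so that the total tower height is $O_H(\log \epsilon^{-1})$.

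\textbf{Reduction.} First reduce to $H = K_h$ with $h = v(H)$. A standard coloring and blow-up argument converts a graph which is $\epsilon$-far from being $H$-free into one which is $\epsilon'$-far from being $K_h$-free with $\epsilon' \geq \epsilon / O_H(1)$, and copies of $K_h$ in the new graph project to copies of $H$ in the original. This loses only a constant factor depending on $H$ in the final bound on $\delta$.

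\textbf{Iterative embedding.} Let $G$ be $\epsilon$-far from $K_h$-free with at most $\delta n^h$ copies of $K_h$. First delete every edge not contained in any copy of $K_h$; since this removes fewer than $\epsilon n^2 / 2$ edges, we may assume every remaining edge lies in at least one such copy, and in particular the remaining graph still contains many copies of $K_h$ in a weighted sense. Then build a clique $v_1, v_2, \ldots, v_h$ together with nested common-neighborhood sets $U_1 \supseteq U_2 \supseteq \cdots \supseteq U_h$ as follows: at step $i$, weight each vertex of $U_i$ by the number of $(h-i)$-cliques it extends, choose $v_{i+1}$ of large weight, and pass to a subset $U_{i+1}$ of its neighborhood in $U_i$ in which all bipartite densities needed for subsequent steps remain above carefully chosen thresholds. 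The crucial quantitative point is that each step shrinks $|U_i|$ by only a single exponential in the reciprocal of the current density, rather than by a full tower as in a regularity refinement. After $h$ steps this yields at least $n^h / F(\epsilon)$ copies of $K_h$, where $F(\epsilon)$ is a tower of height $O_H(\log \epsilon^{-1})$, contradicting the hypothesis once $\delta^{-1} \geq T(a_H \log \epsilon^{-1})$.

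\textbf{Main obstacle.} The technical heart is maintaining both size and density bounds on $U_i$ through all $h$ iterations simultaneously: the surviving sets must remain large enough to continue the iteration while the bipartite densities against the already-embedded clique stay above the thresholds required at subsequent steps. Balancing these so that the compounded loss is bounded by a tower of height $O_H(\log \epsilon^{-1})$, rather than $O_H(\epsilon^{-c_H})$ as in the naive regularity approach, is exactly where the quantitative improvement arises. A secondary issue is the initial step: to guarantee that some starting edge lies in many copies of $K_h$ in the weighted sense, one needs a preliminary averaging argument showing that, after deleting edges in no $K_h$, the remaining weighted density is still of order $\epsilon$.
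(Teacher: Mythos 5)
Your proposal does not follow the paper's route (which iterates the Frieze--Kannan weak regularity lemma and measures progress with the entropy potential $q(x)=x\log x$), and as it stands it has a gap at its core rather than being a genuinely different complete argument. The step ``pass to a subset $U_{i+1}$ of the neighborhood of $v_{i+1}$ in $U_i$ in which all bipartite densities needed for subsequent steps remain above carefully chosen thresholds'' is the entire difficulty of the theorem, and you give no mechanism for it. A graph that is $\e$-far from $H$-free need not be quasirandom in any sense: the extremal examples are Behrend-type constructions, in which a typical edge lies in only about $\e^{c\log \e^{-1}}n$ triangles. So already $|U_2|/n$ is quasi-polynomially small in $\e$, not ``a single exponential in the reciprocal of the current density'' starting from density $\e$, and the density inside $U_2$ can again be arbitrarily structured. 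Extracting a large subset of a neighborhood on which all relevant densities are controlled is exactly what regularity-type decompositions are for; doing it by a naive density-increment argument at each of the $h$ stages reintroduces the very losses you are trying to avoid, and nothing in the proposal bounds the compounded loss by a tower of height $O_H(\log\e^{-1})$. The accounting is also suspicious in the other direction: if each of the $h$ embedding steps truly cost only one exponential, you would obtain a tower of \emph{bounded} height $O(h)$, far stronger than the theorem and than anything known; the $\log\e^{-1}$ in the true bound does not come from the $h$ embedding steps but from the number of iterations of a weak regularity lemma, which the paper controls by the entropy potential (the mean-entropy density lives in an interval of length $d\log(1/d)$ and increases by $\Omega(d/m^2)$ each time the counting lemma fails).

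Two further points. First, your reduction of general $H$ to $K_h$ is not a standard step for removal lemmas and is itself problematic: in the natural auxiliary blow-up, the edges placed between classes corresponding to non-edges of $H$ do not correspond to edges of $G$, so ``$\e$-far from $K_h$-free'' in the auxiliary graph does not transfer cleanly back to ``$\e'$-far from $H$-free'' in $G$; the paper avoids this by treating general $H$ directly. Second, for comparison, the paper's actual engine is: replace $G$ by the union $G'$ of $\ge \e n^2/m$ edge-disjoint copies of $H$; build a short sequence of partitions via iterated weak (Frieze--Kannan) regularity, stopping when the entropy $q(Q)\le q(P)+\gamma$; then show, via the global counting lemma for weak regular partitions, that few homomorphisms of $H$ force many pairs of parts of $P$ to be shattered by $Q$, and convert shattering into an entropy increment by a Jensen-type defect inequality, yielding a contradiction. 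If you want to salvage an embedding-flavoured argument, you would still need some decomposition statement of this kind to justify the existence of the sets $U_{i+1}$; the obstacle you flag at the end is not a technical detail to be balanced but the theorem itself.
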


As is implicit in the bounds, the proof of this theorem does not make an explicit appeal to Szemer\'edi's regularity lemma. However, many of the ideas used are similar to ideas used in the proof of the regularity lemma. The chief difference lies in the fact that the conditions of the removal lemma (containing few copies of a given graph $H$) allow us to say more about the structure of these partitions. A simplified proof of this theorem will be the main topic of Section~\ref{sec:improvedremoval}.

Though still of tower-type, Theorem~\ref{thm:removalimproval} improves substantially on the previous bound. However, it remains very far from the best known lower bound on $\delta^{-1}$. The observation of Ruzsa and Szemer\'edi \cite{RuSz} that $f^{(3)}(n,6,3) = \Omega(r_3(n) n)$ allows one to transfer lower bounds for $r_3(n)$ to a corresponding lower bound for the triangle removal lemma. The best construction of a set containing no arithmetic progression of length $3$ is due to Behrend \cite{Be46} and gives a subset of $[n]$ with density $e^{-c\sqrt{\log n}}$. Transferring this to the graph setting yields a graph containing $\e^{c \log \e^{-1}} n^3$ triangles which cannot be made triangle-free by removing fewer than $\e n^2$ edges. This quasi-polynomial lower bound, $\delta^{-1} \geq \e^{-c \log \e^{-1}}$, remains the best known.\footnote{It is worth noting that the best known upper bound for Roth's theorem, due to Sanders \cite{San11}, is considerably better than the best upper bound for $r_3(n)$ that follows from triangle removal. This upper bound is $r_3(n) = O\left(\frac{(\log \log n)^5}{\log n} n\right)$. A recent result of Schoen and Shkredov \cite{SchShk}, building on further work of Sanders \cite{San12}, shows that any subset of $[n]$ of density $e^{-c (\frac{\log n}{\log \log n})^{1/6}}$ contains a solution to the equation $x_1 + \cdots + x_5 = 5 x_6$. Since arithmetic progressions correspond to solutions of $x_1 + x_2 = 2 x_3$, this suggests that the answer should be closer to the Behrend bound. The bounds for triangle removal are unlikely to impinge on these upper bounds for some time, if at all.}

The standard proof of the induced removal lemma uses the strong regularity lemma of Alon, Fischer, Krivelevich and Szegedy \cite{AFKS}. We will speak at length about this result in Section~\ref{sec:usualinduced}. Here it will suffice to say that, like the ordinary regularity lemma, the bounds which an application of this theorem gives for the induced removal lemma are necessarily very large. Let $W(1) = 2$ and, for $i \geq 1$, $W(i+1) = T(W(i))$. This is known as the wowzer function and its values dwarf those of the usual tower function.\footnote{To give some indication, we note that $W(2) = 4$, $W(3) = 65536$ and $W(4)$ is a tower of $2$s of height $65536$.} By using the strong regularity lemma, the standard proof shows that we may take $\delta^{-1} = W(a_H \epsilon^{-c})$ in the induced removal lemma, Theorem \ref{thm:inducedremoval}. Moreover, as with the ordinary removal lemma, such a bound is inherent in the application of the strong regularity lemma. This follows from recent results of Conlon and Fox \cite{CF12} and, independently, Kalyanasundaram and Shapira \cite{KSh12} showing that the bounds arising in strong regularity are necessarily of wowzer type.

In the other direction, Conlon and Fox \cite{CF12} showed how to bypass this obstacle and prove that the bounds for $\delta^{-1}$ are at worst a tower in a power of $\epsilon^{-1}$.

\begin{theorem} \label{thm:inducedimproval}
There exists a constant $c>0$ such that, for any graph $H$, there exists a constant $a_H$ such that if $\d^{-1} = T(a_H \e^{-c})$ then any graph on $n$ vertices which contains at most $\d n^{v(H)}$ induced copies of $H$ may be made induced $H$-free by adding and/or deleting at most $\e n^2$ edges.
\end{theorem}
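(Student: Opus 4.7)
The plan is to mimic the Fox-style approach that gave Theorem~\ref{thm:removalimproval} for the ordinary removal lemma, but to run it on a ``colored'' reduced graph that remembers densities, so that induced copies can be counted. The main obstacle to directly copying the Alon--Fischer--Krivelevich--Szegedy proof is that it iterates the regularity lemma with an error parameter that depends on the current partition size, producing wowzer bounds; we must avoid any such iteration.

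First I would fix $H$ with $h = v(H)$ vertices and, given $G$ on $n$ vertices with at most $\delta n^h$ induced copies of $H$, look for a vertex partition $\mathcal{P} = \{V_1, \ldots, V_k\}$ together with, for each ordered pair $(i,j)$, a ``target density'' $p_{ij} \in \{0,1\} \cup \{\text{intermediate}\}$ obtained by discretizing the true density $d(V_i,V_j)$ on a grid of mesh $\Theta(\epsilon)$. The ``reduced colored graph'' $R$ on $[k]$ records these $p_{ij}$. Define the \emph{modified graph} $G'$ by, for each pair $(V_i,V_j)$ whose color is $0$, deleting all edges between $V_i$ and $V_j$, and for each pair with color $1$, adding all missing edges; pairs with intermediate color are left alone. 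The total number of edge changes is $O(\epsilon n^2)$ provided that the discretization is fine enough and that only few pairs are modified. The goal is a partition for which $G'$ is induced $H$-free; this would finish the proof.

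The core loop is the following. Start with a trivial partition $\mathcal{P}_0$, and at stage $t$ maintain $\mathcal{P}_t$ together with the colored reduced graph $R_t$. If $R_t$ already contains no \emph{weighted} induced embedding of $H$ (i.e., no $h$-tuple of parts $(V_{i_1},\ldots,V_{i_h})$ with $p_{i_ai_b}$ equal to $1$ or intermediate when $(a,b) \in E(H)$, and equal to $0$ or intermediate when $(a,b) \notin E(H)$, and with $\prod |V_{i_a}|$ non-negligible), then a counting-lemma argument on the intermediate-color pairs (using that these pairs are bounded away from $0$ and $1$) shows that $G'$ is induced $H$-free, and we are done. Otherwise, fix one such potential embedding; by the hypothesis that $G$ contains at most $\delta n^h$ induced copies of $H$, for $\delta$ small enough the actual induced $H$-count inside the corresponding $h$-tuple of parts is far smaller than the ``expected'' count based on $R_t$. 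Following Fox, this discrepancy can be localized to a single pair $(V_i,V_j)$ where the bipartite graph between them, restricted to typical neighborhoods for the other $h-2$ parts, deviates substantially from the color $p_{ij}$; refining $V_i$ and $V_j$ according to these neighborhoods yields a mean-square density increment. Crucially, the refinement only replaces two parts with a bounded (polynomial in $\epsilon^{-1}$) number of sub-parts, so the partition size grows at most by a polynomial factor of $\epsilon^{-O(1)}$ per step, not exponentially.

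Since the mean-square density is bounded by $1$, the loop halts after at most $\epsilon^{-O(1)}$ stages, producing a partition of size $T(a_H \epsilon^{-c})$ as claimed. The hard part of carrying this out is the counting-lemma step: to go from a color-respecting partition directly to induced $H$-freeness of $G'$ one needs a quantitative counting lemma for induced copies that works under a very weak notion of regularity, weaker than any $\epsilon$-regularity condition whose verification would require internal regularization of the parts. This is where the induced setting is genuinely harder than the non-induced one of Theorem~\ref{thm:removalimproval}, because both edges and non-edges must be tracked simultaneously, and even a small fraction of ``bad'' pairs in the wrong places can create or destroy many induced copies. Overcoming this essentially amounts to quantifying that intermediate-color pairs are, on average, sufficiently pseudorandom to count induced $H$, an averaged statement that holds directly for colored partitions obtained by our refinement rule, and does not require strong regularity.
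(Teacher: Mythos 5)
Your proposal diverges from the paper's route and, as written, has a genuine gap at exactly the point you flag as ``the hard part.'' The paper does \emph{not} try to run the entropy-increment argument of Theorem~\ref{thm:removalimproval} on a colored reduced graph with only weak regularity information. Instead, it isolates Lemma~\ref{strongeasycor} --- an equitable partition $V_1\cup\cdots\cup V_k$ together with subsets $W_i\subset V_i$ such that \emph{every} pair $(W_i,W_j)$, including $i=j$, is genuinely $f(k)$-regular and the densities $d(W_i,W_j)$ track $d(V_i,V_j)$ --- and reproves that statement with tower-type bounds by iterating the single-exponential Duke--Lefmann--R\"odl cylinder regularity lemma (Lemmas~\ref{dukelefrod}, \ref{epsdeltaone}, \ref{scr2}, \ref{cylinderclose}). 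It then finishes with the standard AFKS rounding argument and the induced counting lemma, Lemma~\ref{lem:AFKScount}, which requires genuine $\gamma$-regularity of all pairs $(W_i,W_j)$ with $\gamma\le\eta^h/4h$. Your proposal replaces this with the assertion that intermediate-color pairs are ``on average sufficiently pseudorandom to count induced $H$'' without any regularity hypothesis. That assertion is not justified and is the entire difficulty: a pair of density bounded away from $0$ and $1$ can be a disjoint union of a complete and an empty bipartite graph, and a partition in which all pairs are colored ``intermediate'' can still have zero induced copies of $H$ after rounding while the rounded graph $G'$ is far from $G$ in the wrong way. This is precisely why both the AFKS proof and the paper's improvement insist on honest regularity (including self-regularity of the $W_i$, obtained via Lemma~\ref{oneepsilonregularsubset}) before invoking Lemma~\ref{lem:AFKScount}.

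Two further steps of your loop do not close. First, the increment: you fix \emph{one} potential embedding and refine a \emph{single} pair $(V_i,V_j)$, which raises the mean-square density by at most $O(|V_i||V_j|/n^2)=O(1/k^2)$; this is not bounded below by a function of $\epsilon$ alone, so the bound of $\epsilon^{-O(1)}$ on the number of stages does not follow. In the non-induced proof this is rescued by the reservoir of $\epsilon n^2/m$ \emph{edge-disjoint} copies of $H$, which forces a constant fraction of the edge weight to lie in shattered pairs (Lemma~\ref{key} applied to every $h$-tuple simultaneously); the induced hypothesis ``$\epsilon n^2$ additions and deletions are needed'' provides no analogous edge-disjoint witness structure, and your argument never actually uses that hypothesis to drive the increment. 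Second, the claimed growth rate is self-refuting: if each stage multiplied the number of parts by only $\poly(\epsilon^{-1})$ and there were $\epsilon^{-O(1)}$ stages, the final partition would have $\epsilon^{-\epsilon^{-O(1)}}$ parts, i.e.\ you would have proved a quasi-polynomial bound for the induced removal lemma, far stronger than Theorem~\ref{thm:inducedimproval}. The tower arises, even in the improved proofs, because the regularity parameter needed at stage $t$ must be roughly $(2k_t)^{-h}$ where $k_t$ is the current number of parts (so that counting errors do not swamp the main term inside a single $h$-tuple of parts), forcing at least exponential growth per stage. To repair your approach you would have to either prove an induced counting lemma under a Frieze--Kannan-type hypothesis (which fails for the reason above) or reintroduce genuine regularization of the parts --- at which point you have essentially arrived at the paper's Lemma~\ref{easycor2}.
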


A discussion of this theorem will form the subject of Section~\ref{sec:improvedinduced}. The key observation here is that the strong regularity lemma is used to prove an intermediate statement (Lemma~\ref{strongeasycor} below) which then implies the induced removal lemma. This intermediate statement may be proved without recourse to the full strength of the strong regularity lemma. There are also some strong parallels with the proof of Theorem~\ref{thm:removalimproval} which we will draw attention to in due course. 

In Section~\ref{sec:infinite}, we present the proof of Alon and Shapira's infinite removal lemma. In another paper, Alon and Shapira \cite{AlSh08a} showed that the dependence in the infinite removal lemma can depend heavily on the family $\mathcal{H}$. They proved that for every function $\delta:(0,1) \rightarrow (0,1)$,  there exists a family $\mathcal{H}$ of graphs such that any $\delta_{\mathcal{H}}:(0,1) \rightarrow (0,1)$ which satisfies the infinite removal lemma for $\mathcal{H}$ satisfies $\delta_{\mathcal{H}}=o(\delta)$. However, such examples are rather unusual and the proof presented in Section~\ref{sec:infinite} of the infinite removal lemma implies that for many commonly studied families $\mathcal{H}$ of graphs the bound on $\delta_{\mathcal{H}}^{-1}$ is only tower-type, improving the wowzer-type bound from the original proof. 

Our discussions of the graph removal lemma and the induced removal lemma will occupy the bulk of this survey but we will also talk about some further recent developments in the study of removal lemmas. These include arithmetic removal lemmas (Section~\ref{sec:arithmeticremoval}) and the recently developed sparse removal lemmas which hold for subgraphs of sparse random and pseudorandom graphs (Section~\ref{sec:sparseremoval}). We will conclude with some further comments on related topics.

\section{The graph removal lemma} \label{sec:removal}

In this section we will discuss the two proofs of the removal lemma, Theorem~\ref{thm:graphremoval}, at length. In Section~\ref{sec:usualremoval}, we will talk about the regularity lemma and the usual proof
 of the removal proof. Then, in Section~\ref{sec:improvedremoval}, we will consider a simplified variant of the second author's recent proof \cite{F11}, showing how it connects to the weak regularity lemma of Frieze and Kannan \cite{FrKa, FrKa1}.

\subsection{The standard proof} \label{sec:usualremoval}

We begin with the proof of the regularity lemma and then deduce the removal lemma. For vertex subsets $S,T$ of a graph $G$, we let $e_G(S,T)$ denote the number of pairs in $S \times T$ that are edges of $G$ and $d_G(S,T)=\frac{e_G(S,T)}{|S||T|}$ denote the fraction of pairs in $S \times T$ that are edges of $G$. For simplicity of notation, we drop the subscript if the graph $G$ is clear from context. A pair $(S,T)$  of subsets is {\it $\epsilon$-regular} if, for all subsets $S' \subset S$ and $T' \subset T$ with $|S'| \geq \epsilon|S|$ and $|T'|\geq \epsilon |T|$, we have $|d(S',T')-d(S,T)| \leq \epsilon$. Informally, a pair of subsets is $\epsilon$-regular with a small $\epsilon$ if the edges between $S$ and $T$ are uniformly distributed among large subsets.  

Let $G=(V,E)$ be a graph and $P:V=V_1 \cup \ldots \cup V_k$ be a vertex partition of $G$. The partition of $P$ is {\it equitable} if each pair of parts differ in size by at most $1$. The partition $P$ is {\it $\epsilon$-regular} if all but at most $\epsilon k^2$ pairs of parts $(V_i,V_j)$ are $\epsilon$-regular. We next state Szemer\'edi's regularity lemma. 

\begin{lemma}\label{szemreg}
For every $\epsilon>0$, there is $K=K(\epsilon)$ such that every graph $G=(V,E)$ has an equitable, $\epsilon$-regular vertex partition into at most $K$ parts. Moreover, we may take $K$ to be a tower of height  $O(\epsilon^{-5})$. 
\end{lemma}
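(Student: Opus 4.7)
The plan is to follow Szemer\'edi's original energy-increment argument, which is the now-standard route. Define, for a partition $P: V = V_1 \cup \ldots \cup V_k$, the \emph{energy} (also called mean-square density or index)
$$q(P) = \sum_{i,j=1}^{k} \frac{|V_i||V_j|}{|V|^2} d(V_i,V_j)^2.$$
Two immediate observations set up the whole argument: first, since each $d(V_i,V_j) \in [0,1]$, we have $0 \le q(P) \le 1$; second, by (the defect form of) the Cauchy--Schwarz inequality, refining $P$ to a partition $P'$ never decreases the energy, i.e.\ $q(P') \ge q(P)$.

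The heart of the proof is an energy-increment lemma: if $(A,B)$ is \emph{not} $\epsilon$-regular, witnessed by subsets $A' \subset A$, $B' \subset B$ with $|A'| \ge \epsilon|A|$, $|B'|\ge \epsilon|B|$ and $|d(A',B')-d(A,B)|>\epsilon$, then the $2\times 2$ refinement of $(A,B)$ into $(A',A\setminus A')$ and $(B',B\setminus B')$ has energy contribution exceeding that of $(A,B)$ by at least $\epsilon^4 \frac{|A||B|}{|V|^2}$. This is a one-line calculation using Cauchy--Schwarz (or Jensen) applied to the conditional probabilities of being an edge. Summing over the at least $\epsilon k^2$ irregular pairs, a simultaneous common refinement $P'$ of $P$ (with at most $2^k$ parts per original part of $P$) satisfies $q(P') \ge q(P) + \epsilon^5$.

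Starting from the trivial one-part partition with energy at most $1$, this increment step can be iterated at most $\epsilon^{-5}$ times before the partition must be $\epsilon$-regular. Because each step can multiply the number of parts by a factor of $2^k$, the number of parts grows like a tower of height $O(\epsilon^{-5})$, giving the stated bound on $K(\epsilon)$.

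The main (though routine) obstacle is the bookkeeping needed to keep the partition \emph{equitable} throughout. One standard way is to begin with an equitable partition into $\lceil \epsilon^{-1}\rceil$ parts, and after each refinement step repartition each part into pieces of a common small size (with a single leftover ``junk'' class of total mass at most $\epsilon|V|$), then verify that this extra chopping costs at most $\epsilon/2$ in the regularity parameter and a negligible amount in the energy, so the increment argument still drives termination. Absorbing these losses into constants preserves the tower bound of height $O(\epsilon^{-5})$.
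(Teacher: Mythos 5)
Your proposal is correct and follows essentially the same route as the paper: the mean-square density (energy) $q(x)=x^2$, the Cauchy--Schwarz defect inequality giving an increment of at least $\epsilon^4|A||B|/|V|^2$ per irregular pair, the common refinement into at most $2^{k}$ pieces per part yielding a total gain of $\epsilon^5$, iteration at most $O(\epsilon^{-5})$ times, and a separate re-equitization step whose losses are absorbed into constants. This matches the paper's Lemma~\ref{keyregclaim}, Lemma~\ref{makeequip} and Corollary~\ref{keyregclaim2} almost verbatim.
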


Let $q:[0,1] \rightarrow \mathbb{R}$ be a convex function. For vertex subsets $S,T \subset V$ of a graph $G$, let $q(S,T)=q(d(S,T))\frac{|S||T|}{|V|^2}$. For partitions $\mathcal{S}:S =S_1 \cup \ldots \cup S_a$ and $\mathcal{T}:T=T_1 \cup \ldots \cup T_b$, let $q(\mathcal{S},\mathcal{T})=\sum_{1 \leq i \leq a, 1 \leq j \leq b}q(S_i,T_j)$. For a vertex partition $P:V=V_1 \cup \ldots \cup V_k$ of $G$, define the mean-$q$ density to be $$q(P)=q(P,P)=\sum_{1 \leq i,j \leq k} q(V_i,V_j).$$

We next state some simple properties which follow from Jensen's inequality using the convexity of $q$. A {\it refinement} of a partition $P$ of a vertex set $V$ is another partition $Q$ of $V$ such that every part of $Q$ is a subset of a part of $P$. 

\begin{proposition} \label{firstprop}
\begin{enumerate} 
\item For partitions $\mathcal{S}$ and $\mathcal{T}$ of vertex subsets $S$ and $T$, we have $q(\mathcal{S},\mathcal{T}) \geq q(S,T)$. 
\item If $Q$ is a refinement of $P$, then $q(Q) \geq q(P)$. 
\item If $d=d(G)=d(V,V)$ is the edge density of $G$, then, for any vertex partition $P$, $$q(d) \leq q(P) \leq dq(1)+(1-d)q(0).$$
\end{enumerate}
\end{proposition}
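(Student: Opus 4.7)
The plan is to derive all three parts from convexity of $q$, essentially as applications of Jensen's inequality; the only delicate bookkeeping is making sure the density-weighted normalization matches up correctly.

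For part 1, I would set $p_{ij} = \frac{|S_i||T_j|}{|S||T|}$, observing that these are nonnegative weights summing to $1$. Then
\[
 q(\mathcal{S},\mathcal{T}) \;=\; \sum_{i,j} q(d(S_i,T_j))\,\frac{|S_i||T_j|}{|V|^2} \;=\; \frac{|S||T|}{|V|^2}\sum_{i,j} p_{ij}\, q(d(S_i,T_j)).
\]
Jensen applied to the convex $q$ gives $\sum_{i,j} p_{ij}\,q(d(S_i,T_j)) \geq q\!\left(\sum_{i,j} p_{ij}\, d(S_i,T_j)\right)$. The key check is that the inner average of densities collapses to the global density: since $p_{ij}\,d(S_i,T_j) = e(S_i,T_j)/(|S||T|)$, summing gives $e(S,T)/(|S||T|) = d(S,T)$. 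Hence $q(\mathcal{S},\mathcal{T}) \geq \frac{|S||T|}{|V|^2} q(d(S,T)) = q(S,T)$, as claimed.

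Part 2 is now immediate. Given a refinement $Q$ of $P \colon V = V_1 \cup \cdots \cup V_k$, let $\mathcal{V}_i$ denote the partition of $V_i$ inherited from $Q$. Then $q(Q) = \sum_{i,j} q(\mathcal{V}_i, \mathcal{V}_j)$ and $q(P) = \sum_{i,j} q(V_i, V_j)$, so applying part 1 to each pair $(V_i,V_j)$ and summing yields $q(Q) \geq q(P)$.

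For part 3, the lower bound is simply part 1 applied with $S = T = V$ and $\mathcal{S} = \mathcal{T} = P$, which gives $q(P) \geq q(V,V) = q(d)$. For the upper bound, I would use convexity pointwise: for any $x \in [0,1]$, $q(x) \leq x\, q(1) + (1-x)\, q(0)$. Applying this to each $d(V_i,V_j)$ and weighting by $|V_i||V_j|/|V|^2$,
\[
 q(P) \;\leq\; q(1)\sum_{i,j} \frac{e(V_i,V_j)}{|V|^2} \;+\; q(0)\sum_{i,j} \frac{|V_i||V_j| - e(V_i,V_j)}{|V|^2} \;=\; d\, q(1) + (1-d)\, q(0),
\]
using $\sum_{i,j} |V_i||V_j| = |V|^2$ and $\sum_{i,j} e(V_i,V_j) = e(V,V) = d\,|V|^2$. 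There is no real obstacle here; the only thing to get right is recognizing that the $|V|^{-2}$ normalization built into $q(S,T)$ is exactly what converts the pointwise Jensen/convexity inequalities into the stated global inequalities.
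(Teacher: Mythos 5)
Your proof is correct and is exactly the argument the paper has in mind: the paper states that Proposition \ref{firstprop} follows from Jensen's inequality via the convexity of $q$, and your write-up supplies precisely those Jensen computations, with the weights $p_{ij}$ and the normalization by $|V|^2$ handled correctly. Nothing further is needed.
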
 

The first and second part of Proposition \ref{firstprop} show that by refining a vertex partition the mean-$q$ density cannot decrease, while the last part gives the range of possible values for $q(P)$ if we only know the edge density $d$ of $G$.

The convex function $q(x)=x^2$ for $x \in [0,1]$ is chosen in the standard proof of the graph regularity lemma and we will do the same for the rest of this subsection. The following lemma is the key claim for the proof of the regularity lemma. The set-up is that we have a partition $P$ which is not $\epsilon$-regular. For each pair $(V_i,V_j)$ of parts of $P$ which is not $\epsilon$-regular, there are a pair of witness subsets $V_{ij},V_{ji}$ to the fact that the pair of parts is not $\epsilon$-regular. We consider the coarsest refinement $Q$ of $P$ so that each witness subset is the union of parts of $Q$. The lemma concludes that the number of parts of $Q$ is at most exponential in the number of parts of $P$ and, using a Cauchy-Schwarz defect inequality, that the mean-$q$ density of the partition $Q$ is substantially larger than the mean-$q$ density of $P$. Because it simplifies our calculations a little, we will assume, when we say a partition is equitable, that it is exactly equitable, that is, that all parts have precisely the same size. This does not affect our results substantially but simplifies the presentation.

\begin{lemma} \label{keyregclaim}
If an equitable partition $P:V=V_1 \cup \ldots \cup V_k$ is not $\epsilon$-regular then there is a refinement $Q$ of $P$ into at most $k2^k$ parts for which $q(Q) \geq q(P)+\epsilon^5$. 
\end{lemma}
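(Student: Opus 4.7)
The plan is to run the standard energy-increment argument. For each pair $(V_i,V_j)$ that witnesses the failure of $\epsilon$-regularity, fix subsets $A_{ij}\subset V_i$ and $A_{ji}\subset V_j$ with $|A_{ij}|\geq\epsilon|V_i|$, $|A_{ji}|\geq\epsilon|V_j|$ and $|d(A_{ij},A_{ji})-d(V_i,V_j)|>\epsilon$. Construct $Q$ by refining each $V_i$ into the atoms of the Boolean algebra generated by the (at most $k$) sets $\{A_{ij}\}_j$. Each $V_i$ is then split into at most $2^k$ atoms, so $Q$ has at most $k2^k$ parts, which immediately gives the bound on the number of parts.

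The heart of the argument is a defect Cauchy--Schwarz estimate on each irregular pair. Fix such a pair $(V_i,V_j)$ with witnesses $A=A_{ij}$ and $B=A_{ji}$, and set $\alpha=|A|/|V_i|$, $\beta=|B|/|V_j|$. Writing $d(V_i,V_j)$ as the convex combination of the four block densities $d(A,B),\ d(A,V_j\!\setminus\! B),\ d(V_i\!\setminus\! A,B),\ d(V_i\!\setminus\! A,V_j\!\setminus\! B)$ with weights $\alpha\beta,\alpha(1-\beta),(1-\alpha)\beta,(1-\alpha)(1-\beta)$, the variance identity for $q(x)=x^2$ gives
$$q(\{A,V_i\!\setminus\! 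A\},\{B,V_j\!\setminus\! B\}) - q(V_i,V_j) \;\geq\; \alpha\beta\bigl(d(A,B)-d(V_i,V_j)\bigr)^2\frac{|V_i||V_j|}{|V|^2} \;>\; \epsilon^4\cdot\frac{1}{k^2},$$
where we used $\alpha,\beta\geq\epsilon$, the witnessing inequality $|d(A,B)-d(V_i,V_j)|>\epsilon$, and equitability of $P$ (so $|V_i||V_j|/|V|^2=1/k^2$).

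To finish, sum the local gains. By construction, $Q$ refines the two-by-two partition $\{A_{ij},V_i\!\setminus\! A_{ij}\}\times\{A_{ji},V_j\!\setminus\! A_{ji}\}$ of every irregular pair, so by the monotonicity in Proposition~\ref{firstprop}(1) the contribution $q(Q|_{V_i},Q|_{V_j})$ dominates the two-by-two quantity computed above, while for the regular pairs the same monotonicity gives $q(Q|_{V_i},Q|_{V_j})\geq q(V_i,V_j)$. Summing over all ordered pairs, the unrefined contributions recover $q(P)$, and the surplus of at least $\epsilon^4/k^2$ coming from each of the at least $\epsilon k^2$ irregular pairs contributes an extra $\epsilon^5$, yielding $q(Q)\geq q(P)+\epsilon^5$.

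The only real obstacle is the defect Cauchy--Schwarz step: one must carefully account for the five factors of $\epsilon$ (two from $\alpha,\beta\geq\epsilon$, two from squaring the density defect, and one from the density of irregular pairs) and verify that ignoring the three non-witness blocks in the variance inequality is harmless. Everything else is bookkeeping or a direct appeal to Proposition~\ref{firstprop}.
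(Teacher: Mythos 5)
Your proposal is correct and follows essentially the same route as the paper's proof: the same witness sets, the common refinement into at most $2^{k}$ atoms per part, the exact variance identity for $q(x)=x^2$ with only the witness block retained, and the same accounting of the five factors of $\epsilon$ (two from the sizes of the witness sets, two from the density defect, one from the number of irregular pairs). No gaps.
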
 
\begin{proof}
For each pair $(V_i,V_j)$ which is not $\epsilon$-regular, there are subsets $V_{ij} \subset V_i$ and $V_{ji} \subset V_j$ with $|V_{ij}| \geq \epsilon|V_i|$ and $|V_{ji}| \geq \epsilon |V_j|$ such that $|d(V_{ij},V_{ji})-d(V_i,V_j)| \geq \epsilon$. For each part $V_j$ such that $(V_i,V_j)$ is not $\epsilon$-regular, we have a partiton $P_{ij}$ of $V_i$ into two parts $V_{ij}$ and $V_i \setminus V_{ij}$. Let $P_i$ be the partition of $V_i$ which is the common refinement of these at most $k-1$ partitions of $V_i$, so $P_i$ has at most $2^{k-1}$ parts. We let $Q$ be the partition of $V$ which is the union of the $k$ partitions of the form $P_i$, so $Q$ has at most $k2^{k-1}$ parts. 
We have \begin{eqnarray*} q(Q)-q(P) & = & \sum_{i,j}\left(q(P_i,P_j)-q(V_i,V_j)\right)   
\\ & \geq &  \sum_{(V_i,V_j)~\textrm{irregular}}\left(q(P_i,P_j)-q(V_i,V_j)\right)  
\\ & \geq &  \sum_{(V_i,V_j)~\textrm{irregular}}\left(q(P_{ij},P_{ji})-q(V_i,V_j)\right)  
 \\ & = &  \sum_{(V_i,V_j)~\textrm{irregular}}\sum_{U \in P_{ij},W \in P_{ji}}\frac{|U||W|}{|V|^2}\left(d(U,W)-d(V_i,V_j)\right)^2
\\ & \geq  &  \sum_{(V_i,V_j)~\textrm{irregular}}\frac{|V_{ij}||V_{ji}|}{|V|^2}\left(d(V_{ij},V_{ji})-d(V_i,V_j)\right)^2 
\\ & \geq & \epsilon k^2\left(\frac{\epsilon}{k}\right)^2\epsilon^2 \\ & = & \epsilon^5,
\end{eqnarray*}
where the first and third inequalities are by noting that the summands are nonnegative and the second inequality follows from the first part of Proposition \ref{firstprop}, which shows that the mean-$q$ density cannot decrease when taking a refinement. In the fourth inequality, we used that $|V_{ij}| \geq \e |V_i| \geq \frac{\e}{k} |V|$ and similarly for $|V_{ji}|$.  Finally, the equality in the fourth line follows from the identity 
\[\sum_{U \in P_{ij}, W \in P_{ji}} |U||W| d(V_i, V_j) = \sum_{U \in P_{ij}, W \in P_{ji}} |U||W| d(U, W),\]
which counts $e(V_i,V_j)$ in two different ways. This completes the proof.
\end{proof} 

The next lemma, which is rather standard, shows that for any vertex partition
$Q$, there is a vertex equipartition $P'$ with a similar number of parts to $Q$
and mean-square density not much smaller than the mean-square density of $Q$.
It is useful in density increment arguments where at each stage one would like
to work with an equipartition. It is proved by first arbitrarily partitioning each part of $Q$ into parts of order $|V|/t$, except possibly one additional remaining smaller part, and then arbitrarily partitioning the 
union of the smaller remaining parts into parts of order $|V|/t$. 

\begin{lemma}\label{makeequip} Let $G=(V,E)$ be a graph and $Q:V=V_1 \cup
\ldots \cup V_\ell$ be a vertex partition into $k$ parts.  Then, for $q(x) = x^2$, there is an equitable
partition $P'$ of $V$ into $t$ parts such that $q(P') \geq q(Q)-2\frac{\ell}{t}$.
\end{lemma}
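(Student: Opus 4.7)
The plan is to construct $P'$ directly by the procedure sketched in the paragraph preceding the lemma statement, and then bound the loss in mean-square density via a good part / bad part split combined with the Cauchy--Schwarz defect identity already used in the proof of Lemma \ref{keyregclaim}. Concretely, I would first carve each part $V_i$ of $Q$ greedily into as many chunks of size $\lfloor |V|/t\rfloor$ or $\lceil |V|/t\rceil$ as possible, leaving a single remainder piece $R_i\subset V_i$ of size less than $|V|/t$; then I would gather $L=\bigcup_i R_i$, which has $|L|\le \ell\cdot |V|/t$, and chop $L$ arbitrarily into further parts of equitable size. The resulting $P'$ is an equitable partition into $t$ parts. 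Call a part of $P'$ \emph{good} if it is contained in some $V_i$, and \emph{bad} otherwise; the bad parts are exactly the pieces obtained by chopping $L$.

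To compare $q(P')$ with $q(Q)$, I would consider the common refinement $P''$ of $P'$ and $Q$. Since $P''$ refines $Q$, Proposition~\ref{firstprop}(2) gives $q(P'')\ge q(Q)$, so it suffices to prove that $q(P')\ge q(P'')-2\ell/t$. For any ordered pair of parts $A,B$ of $P'$, the same Jensen computation that appears in the proof of Lemma~\ref{keyregclaim} yields
$$
\sum_{U\subseteq A,\, U'\subseteq B}\frac{|U||U'|}{|V|^2}\bigl(d(U,U')-d(A,B)\bigr)^2
= q(A\cap P'',\, B\cap P'') - q(A,B) \le \frac{|A||B|}{|V|^2},
$$
where the sums run over parts of $P''$ refining $A$ and $B$ and the last inequality uses that each squared deviation is at most $1$. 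Crucially, if both $A$ and $B$ are good, then each is contained in a single $V_i$, so the refinement of $A$ and $B$ by $Q$ is trivial and the left side vanishes; hence only pairs with at least one bad endpoint contribute.

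It remains to sum $|A||B|/|V|^2$ over pairs $(A,B)$ with at least one of $A,B$ bad. Writing $s=|L|/|V|$, this sum equals $1-(1-s)^2 \le 2s \le 2\ell/t$, so summing the previous display gives $q(P'')-q(P')\le 2\ell/t$, and therefore $q(P')\ge q(Q)-2\ell/t$, as required.

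I do not expect a serious obstacle here. The only mildly delicate point is the separation into good and bad parts, which is what makes $2\ell/t$ (rather than a constant) appear; if one tried to bound the loss pair by pair without this split, the naive bound would be much too weak. A tiny amount of care is also needed because $t$ need not divide $|V|$, but allowing chunk sizes $\lfloor |V|/t\rfloor$ and $\lceil |V|/t\rceil$ leaves the counting bound $|L|\le \ell\cdot |V|/t$ and the final estimate unchanged.
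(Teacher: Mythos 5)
Your construction is exactly the one the paper sketches in the paragraph before the lemma (carve each part of $Q$ into chunks of order $|V|/t$, pool the remainders, and chop the pool), and your good/bad split together with the Cauchy--Schwarz defect identity correctly supplies the density bound that the paper leaves to the reader. The argument is correct as written.
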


Combining Lemmas \ref{keyregclaim} and \ref{makeequip} with $t=4\epsilon^{-5}|Q| \leq \epsilon^{-5}k2^{k+2}$, we obtain the following corollary. 

\begin{corollary} \label{keyregclaim2}
If an equitable partition $P:V=V_1 \cup \ldots \cup V_k$ is not $\epsilon$-regular then there is an equitable refinement $P'$ of $P$ into at most $\epsilon^{-5}k2^{k+2}$ parts for which $q(P') \geq q(P)+\epsilon^5/2$. 
\end{corollary}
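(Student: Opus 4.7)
The plan is a direct composition of the two preceding lemmas, with parameters chosen so that the density gain from refinement dominates the density loss from equitizing. I would first invoke Lemma \ref{keyregclaim} on the non-$\epsilon$-regular equitable partition $P$ to obtain a refinement $Q$ with $\ell \leq k 2^k$ parts satisfying $q(Q) \geq q(P) + \epsilon^5$. This $Q$, coming from arbitrary witness-subset refinements, is generally not equitable, so the second step is to feed $Q$ into Lemma \ref{makeequip} to produce an equitable partition $P'$ into $t$ parts with $q(P') \geq q(Q) - 2\ell/t$.

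The remaining arithmetic is to choose $t$ so that the equitizing loss is absorbed into half of the $\epsilon^5$ gain. Taking $t = 4 \epsilon^{-5} \ell$ gives $2\ell/t = \epsilon^5/2$, and hence
\[
q(P') \;\geq\; q(Q) - \epsilon^5/2 \;\geq\; q(P) + \epsilon^5/2,
\]
while $t \leq 4 \epsilon^{-5} \cdot k 2^k = \epsilon^{-5} k 2^{k+2}$, matching the claimed part-count bound. Thus, as stated in the lines preceding the corollary, the choice $t = 4\epsilon^{-5}|Q|$ is exactly what is needed.

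The one subtlety I want to address carefully is ensuring that $P'$ is genuinely a \emph{refinement of $P$}, not merely an equitable partition whose mean-$q$ density is close to $q(Q)$. The equitizing construction sketched after Lemma \ref{makeequip} potentially violates this, because the final step reassembles leftover sub-pieces from possibly different parts of $P$. The fix is to run the equitizing procedure \emph{within each part $V_i$ of $P$ separately}: since $P$ is equitable with parts of size $|V|/k$, and $Q$ refines $P$, restricting $Q$ to each $V_i$ gives a partition of $V_i$ which we equitize into $t/k$ sub-parts of size $|V|/t$ (assuming divisibility; otherwise one loses negligible additive error). Taking the union of these local equipartitions yields an equitable refinement $P'$ of $P$ with exactly $t$ parts, and the estimate in Lemma \ref{makeequip} still applies summand-by-summand. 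This is the only nontrivial bookkeeping point; everything else is arithmetic.
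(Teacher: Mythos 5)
Your proof is correct and is essentially identical to the paper's, which obtains the corollary by exactly this combination of Lemmas \ref{keyregclaim} and \ref{makeequip} with $t = 4\epsilon^{-5}|Q| \leq \epsilon^{-5}k2^{k+2}$. Your additional observation that the equitizing step must be performed within each part of $P$ separately to preserve the refinement property is a legitimate point that the paper glosses over, and your fix is the right one.
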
 

We next show how Szemer\'edi's regularity lemma, Lemma \ref{szemreg}, can be quickly deduced from this result. 

\begin{proof}To prove the regularity lemma, we start with the trivial partition $P_0$ into one part, and iterate the above corollary to obtain a sequence $P_0,P_1,\ldots,P_s$ of equitable partitions with $q(P_{i+1}) \geq q(P_{i})+\epsilon^5/2$ until we arrive at an equitable $\epsilon$-regular partition $P_s$. As the mean-square density of each partition has to lie between $0$ and $1$, after at most $2\epsilon^{-5}$ iterations we arrive at the equitable $\epsilon$-regular partition $P_s$ with $s \leq 2\epsilon^{-5}$. The number of parts increases by one exponential in each iteration, giving the desired number of parts in the regularity partition. This completes the proof of Szemer\'edi's regularity lemma.
\end{proof}

The constructions of Gowers \cite{G97} and the authors \cite{CF12} show that the tower-type bound on the number of parts in Szemer\'edi's regularity lemma is indeed necessary. In particular, the construction in \cite{CF12} shows that $K(\epsilon)$ in Lemma \ref{szemreg} is at least a tower of twos of height $\Omega(\epsilon^{-1})$. The constructions are formed by reverse engineering the upper bound proof. We construct a sequence $P_0,\ldots,P_s$ of partitions with $s=\Omega(\epsilon^{-1})$. As in the upper bound proof, each partition in the sequence uses exponentially more parts than the previous partition in the sequence. We may choose the edges using these partitions and some randomness so as to guarantee that none of these partitions (except the last) are $\epsilon$-regular. Furthermore, we can guarantee that any partition that is $\epsilon$-regular must be close to being a refinement of the last partition in this sequence. This implies that the number of parts must be at least roughly $|P_s|$. 

We next prove the graph removal lemma, Theorem \ref{thm:graphremoval}, from the regularity lemma. 

\begin{proof}
Let $m$ denote the number of edges of $H$, so $m \leq {h \choose 2}$. Let $\gamma=\frac{\epsilon^h}{4h}$ and $\delta=(2h)^{-2h}\epsilon^{m}K^{-h}$, where $K=K(\gamma)$ is as in the regularity lemma. We apply the regularity lemma to $G$ and obtain an equitable, $\gamma$-regular partition into $k \leq  K$ parts. If the number $n$ of vertices of $G$ satisfies $n<\delta^{-1/h}$, then the number of copies of $H$ in $G$ is at most $\delta n^h < 1$ and $G$ is $H$-free, in which case there is nothing to prove. So we may assume $n \geq \delta^{-1/h}$. We obtain a subgraph $G'$ of $G$ by removing  edges of $G$ between all pairs of parts which are not $\gamma$-regular or which have edge density at most $\epsilon$. As there are at most $\gamma k^2$ ordered pairs of parts which are not $\gamma$-regular and each part has order at most $2n/k$, at most $(\gamma k^2/2)(2n/k)^2=2\gamma n^2$ edges are deleted between pairs of parts which are not $\gamma$-regular.  The number of edges between parts which have edge density at most $\epsilon$ is at most $\epsilon n^2/2$. Hence, the number of edges of $G$ deleted to obtain $G'$ is at most $2\gamma n^2+\epsilon n^2 /2 < \epsilon n^2$. If $G'$ is $H$-free, then we are done. 

Assume for contradiction that $G'$ is not $H$-free. A copy of $H$ in $G'$ must have its edges going between pairs of parts which are both $\gamma$-regular and have density at least $\epsilon$. Hence, there is a mapping from $V(H)$ to the partition of $V(G)$ so that each edge of $H$ maps to a pair of parts which is both $\gamma$-regular and have edge density at least $\epsilon$. But the following standard counting lemma (see, e.g., Lemma 3.2 in Alon, Fischer, Krivelevich and Szegedy \cite{AFKS} for a minor variant) shows that the number of labeled copies of $H$ in $G'$ (and hence in $G$) is at least $2^{-h}\epsilon^m(n/2k)^h > h!\delta n^h$. This contradicts that $G$ has at most $\delta n^h$ copies of $H$, completing the proof. 
\end{proof}

\begin{lemma} \label{lem:AFKSusualcount}

If $H$ is a graph with vertices $1,\ldots,h$ and $m$ edges and $G$ is a graph with not necessarily disjoint vertex subsets $W_1,\ldots,W_h$ such that $|W_i| \geq \gamma^{-1}$ for $1 \leq i \leq h$ and, for every edge $(i,j)$ of $H$, the pair $(W_i,W_j)$ is $\gamma$-regular with density $d(W_i,W_j)>\epsilon$ and  $\gamma \leq \frac{\epsilon^h}{4h}$, then $G$ contains at least
$2^{-h}\epsilon^{m}|W_1| \times \cdots \times |W_h|$
labeled copies of $H$ with the copy of vertex $i$ in $W_i$.
\end{lemma}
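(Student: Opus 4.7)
The plan is the standard greedy embedding argument. I label the vertices of $H$ as $1,\dots,h$ and build labeled copies of $H$ by choosing the image $x_i\in W_i$ of vertex $i$ one vertex at a time, in this order, so that all already-required adjacencies hold in $G$.

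For each partial embedding $x_1,\dots,x_i$ and each not-yet-embedded vertex $j>i$, I track the candidate set
\[
C_j^{(i)} \;:=\; W_j \cap \bigcap_{k\le i,\,(k,j)\in E(H)} N_G(x_k)
\]
of vertices in $W_j$ that remain compatible. At step $i$ the aim is to select $x_i$ from $C_i^{(i-1)}$ while ensuring that for every $j>i$ with $(i,j)\in E(H)$ the updated candidate set $C_j^{(i)}=C_j^{(i-1)}\cap N_G(x_i)$ satisfies $|C_j^{(i)}|\ge (d(W_i,W_j)-\gamma)|C_j^{(i-1)}|$. The key input is $\gamma$-regularity of $(W_i,W_j)$: provided $|C_j^{(i-1)}|\ge \gamma|W_j|$, the set of $x\in W_i$ failing this inequality for a fixed $j$ has size at most $\gamma|W_i|$, so summing over the at most $h$ relevant neighbors $j$ forbids at most $h\gamma|W_i|$ choices for $x_i$.

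An easy induction then gives
\[
|C_j^{(i)}|\;\ge\;\prod_{k\le i,\,(k,j)\in E(H)}\!\!(d(W_k,W_j)-\gamma)\;|W_j|\;\ge\;(\epsilon-\gamma)^{h-1}|W_j|
\]
at every stage, and the hypotheses $|W_j|\ge\gamma^{-1}$ and $\gamma\le \epsilon^h/(4h)$ are calibrated exactly so that (a) every $|C_j^{(i)}|$ stays above the threshold $\gamma|W_j|$, permitting regularity to be reapplied, and (b) the number $|C_i^{(i-1)}|-h\gamma|W_i|$ of valid choices for $x_i$ is at least $\tfrac12|C_i^{(i-1)}|$. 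Multiplying these step-wise lower bounds over $i=1,\dots,h$ and collecting the factor $(d(W_k,W_i)-\gamma)$ for each edge $(k,i)\in E(H)$ exactly once yields a lower bound of $2^{-h}\prod_i|W_i|\prod_{(k,i)\in E(H)}(d(W_k,W_i)-\gamma)$ on the number of labeled copies, which, after absorbing the gap between $(d(W_k,W_i)-\gamma)$ and $\epsilon$ into the slack in the hypothesis on $\gamma$, is at least $2^{-h}\epsilon^m|W_1|\cdots|W_h|$, as required.

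The main obstacle is the numerical bookkeeping: the single hypothesis $\gamma\le \epsilon^h/(4h)$ (together with $|W_j|\ge\gamma^{-1}$) must simultaneously control three accumulated error terms---the candidate sets must remain above the $\gamma$-regularity threshold throughout, the forbidden set of size $h\gamma|W_i|$ must be at most half of $|C_i^{(i-1)}|$ at every step, and the product $\prod(d-\gamma)$ over the edges of $H$ must not lose appreciably against $\epsilon^m$. No cleverness is needed in choosing the ordering of the vertices of $H$, since each edge of $H$ contributes its $(d-\gamma)$-factor exactly once regardless of the order selected.
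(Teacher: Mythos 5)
Your proposal is the standard greedy embedding argument, which is exactly the proof the paper has in mind (the paper itself only sketches this strategy and defers the details to Lemma 3.2 of Alon--Fischer--Krivelevich--Szegedy); the numerical bookkeeping you defer does check out, provided the two losses (the forbidden sets of size $h\gamma|W_i|$ and the gap between $d(W_k,W_i)-\gamma$ and $\epsilon$) are absorbed jointly at each step via $(\epsilon-\gamma)^{d_i}-h\gamma\geq\tfrac12\epsilon^{d_i}$, where $d_i$ is the back-degree of $i$, rather than bounding $\prod_e(d_e-\gamma)$ below by $\epsilon^m$ on its own (which is false). One point worth making explicit: since the $W_i$ may overlap, counting \emph{labeled} copies requires also excluding the previously embedded vertices $x_1,\dots,x_{i-1}$ at step $i$; this removes at most $h-1\leq h\gamma|W_i|$ further candidates by the hypothesis $|W_i|\geq\gamma^{-1}$ (which is present in the statement precisely for this purpose) and is absorbed by the same slack.
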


The standard proof of this counting lemma uses a greedy embedding strategy. One considers embedding the vertices one at a time, using the regularity condition to maintain the property that at each step where vertex $i$ of $H$ is not yet embedded the set of vertices of $G$ which could potentially be used to embed vertex $i$ is large. 

\subsection{An improved bound} \label{sec:improvedremoval}

A partition $P:V=V_1 \cup \ldots \cup V_k$ of the vertex set of a graph $G=(V,E)$ is 
{\it weak $\epsilon$-regular} if, for all subsets $S,T \subset V$, we have $$\left|e(S,T)-\sum_{1 \leq i,j \leq k} |S \cap V_i||T \cap V_j|d(V_i,V_j)\right| \leq \epsilon |V|^2.$$ 
That is, the density between two sets may be approximated by taking a weighted average over the densities between the sets which they intersect.

The Frieze-Kannan weak regularity lemma \cite{FrKa, FrKa1} states that any graph has such a weak regular partition. 

\begin{lemma} \label{lem:FK} 
Let $R(\epsilon)=2^{c\epsilon^{-2}}$, where $c$ is an absolute constant. For every graph $G=(V,E)$ and every equitable partition $P$ of $G$ into $k$ parts, there is an equitable partition $P'$ which is a refinement of $P$ into at most $kR(\epsilon)$ parts which is weak $\epsilon$-regular. 
\end{lemma}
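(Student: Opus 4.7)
The plan is to mimic the energy-increment proof of Szemer\'edi's regularity lemma given in Lemma \ref{keyregclaim}, but with a much cheaper refinement step. Because weak regularity only demands control of the single bilinear form $\langle A - A_P,\, 1_S \otimes 1_T\rangle$ for each pair $(S,T)$---rather than a uniform density condition across every pair of parts simultaneously---a single witness $(S,T)$ to the failure of weak regularity can be neutralised by intersecting each current part with $\{S,S^c\}$ and $\{T,T^c\}$, which multiplies the number of parts by only $4$ instead of the $2^k$ blow-up in the Szemer\'edi proof. I work again with the mean-square density $q(P)=\sum_{i,j} d(V_i,V_j)^2 |V_i||V_j|/|V|^2 = |V|^{-2}\|A_P\|_F^2$, where $A_P$ denotes the step-function approximation of the adjacency matrix $A$. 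Each refinement of this shape will be shown to gain at least $\epsilon^2$ in $q$, so the iteration must halt after at most $\epsilon^{-2}$ rounds.

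Explicitly, starting from $P_0 := P$, if $P_i$ is not weak $\epsilon$-regular then there exist $S, T \subset V$ with $|\langle A - A_{P_i},\, 1_S \otimes 1_T\rangle| > \epsilon|V|^2$. Let $P_{i+1}$ be the common refinement of $P_i$ with $\{S,S^c\}$ and $\{T,T^c\}$, so $|P_{i+1}| \le 4|P_i|$. Because $1_S \otimes 1_T$ is now a step function over $P_{i+1}$ and $A_{P_{i+1}}$ is the conditional expectation of $A$ on $P_{i+1}$-blocks,
\[
\langle A_{P_{i+1}} - A_{P_i},\, 1_S \otimes 1_T\rangle \;=\; \langle A - A_{P_i},\, 1_S \otimes 1_T\rangle.
\]
Cauchy--Schwarz together with $\|1_S \otimes 1_T\|_2 = \sqrt{|S||T|} \le |V|$ yields $\|A_{P_{i+1}} - A_{P_i}\|_2 > \epsilon|V|$, and the Pythagorean identity (valid because $A_{P_{i+1}} - A_{P_i}$ is orthogonal to every step function over $P_i$, in particular to $A_{P_i}$) gives $q(P_{i+1}) - q(P_i) > \epsilon^2$. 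Since $q \in [0,1]$, the procedure terminates after at most $\epsilon^{-2}$ steps with a refinement $Q$ of $P$ that is weak $\epsilon$-regular and has at most $k \cdot 4^{\epsilon^{-2}} = k \cdot 2^{2\epsilon^{-2}}$ parts, but is not yet equitable.

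The main obstacle is the final equitable-ification. One cannot just apply Lemma \ref{makeequip} to $Q$, since its output is not in general a refinement of $Q$ and weak regularity is not preserved under arbitrary equitable approximation. Instead I would equitable-ify within each block $V_i$ of the original partition $P$ separately: since $Q$ refines $P$, each $V_i$ inherits its own partition from $Q$, and equitably splitting $V_i$ into pieces of common size $s := \epsilon|V|/|Q|$ (absorbing any small leftover inside $V_i$ into a few extra pieces, as in the proof of Lemma \ref{makeequip}) produces an equitable partition $P'$ that still refines $P$. A short averaging/concentration argument shows that the new piecewise densities concentrate around the corresponding $Q$-densities up to $O(\epsilon)$, hence $\|A_{P'}-A_Q\|_\square = O(\epsilon)$, so $P'$ is weak $O(\epsilon)$-regular. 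Running the whole argument at level $\epsilon/C$ for a suitable constant $C$ absorbs this loss, and the final part-count is $|V|/s = O(|Q|/\epsilon) = k \cdot 2^{O(\epsilon^{-2})}$, as claimed. (The degenerate case in which $|V|$ is too small for the concentration to apply is handled by the singleton partition, which is already equitable and weak $0$-regular.)
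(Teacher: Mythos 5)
The paper does not prove Lemma \ref{lem:FK} itself (it is quoted from Frieze and Kannan), so there is no internal proof to compare against; your energy-increment iteration --- refine by the four cells $S\cap T$, $S\cap T^c$, $S^c\cap T$, $S^c\cap T^c$ of a witness pair, gain $\epsilon^2$ in mean-square density by orthogonality, stop after $\epsilon^{-2}$ rounds --- is the standard argument and that part of your write-up is correct in every detail.

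The gap is in the equitablization. Your claim that ``the new piecewise densities concentrate around the corresponding $Q$-densities up to $O(\epsilon)$'' is false: weak $\epsilon$-regularity of $Q$ only controls $|d(W_a,W'_b)-d(W,W')|$ for sub-pieces $W_a\subset W$, $W'_b\subset W'$ up to $\epsilon|V|^2/(|W_a||W'_b|)$, which for pieces of size $s=\epsilon|V|/|Q|$ is $|Q|^2/\epsilon\gg 1$; and indeed a half-graph between two parts of $Q$ is perfectly compatible with weak regularity while its sub-block densities range over all of $[0,1]$. Fortunately no pointwise density control is needed. The correct (and still short) route is: if $P'$ refines $Q$, then $A_{P'}-A_Q=E_{P'}(A-A_Q)$, so $\langle A_{P'}-A_Q,\,1_S\otimes 1_T\rangle=\langle A-A_Q,\,E_{P'}(1_S)\otimes E_{P'}(1_T)\rangle$; since the bilinear form $(f,g)\mapsto\langle A-A_Q,\,f\otimes g\rangle$ on $[0,1]^V\times[0,1]^V$ attains its maximum modulus at extreme points, i.e.\ at indicator functions, this is at most $\epsilon|V|^2$ in absolute value, whence any refinement of a weak $\epsilon$-regular partition is weak $2\epsilon$-regular. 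Your actual $P'$ fails to refine $Q$ only on the $\le\eta|V|$ regrouped leftover vertices (with $s=\eta|V|/|Q|$), and these change $A_{P'}$ on a set of blocks of total measure at most $2\eta|V|^2$, contributing at most $2\eta|V|^2$ to the cut-norm error since all entries lie in $[0,1]$. Running the iteration at level $\epsilon/4$ and taking $\eta=\epsilon/4$ then yields a weak $\epsilon$-regular equitable refinement of $P$ with $k\cdot 2^{O(\epsilon^{-2})}$ parts, as required. (Grouping the leftovers within each original part $V_i$, as you propose, is the right move to keep $P'$ a refinement of $P$; note also that the alternative of re-equitablizing after every refinement step, as in Corollary \ref{keyregclaim2}, would cost an extra $\log\epsilon^{-1}$ factor in the exponent.)
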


Unlike the usual regularity lemma, the bounds in Lemma \ref{lem:FK} are quite reasonable.\footnote{They are also sharp, that is, there are graphs for which the minimum number of parts in any weak $\e$-regular partition is $2^{\Omega(\e^{-2})}$. This was proved in \cite{CF12} (see also \cite{AFKK}).} It is therefore natural to try to apply it to prove the removal lemma. However, it seems unlikely that this lemma is itself sufficient to prove the removal lemma, since it only gives control over edge densities of a global nature. 

However, as noted by Tao \cite{T06} (see also \cite{RS10}), one can prove a stronger theorem by simply iterating the Frieze-Kannan weak regularity lemma.\footnote{We will say more about this sort of iteration in Section~\ref{sec:usualinduced} below.} Tao developed this lemma to give an alternative proof of the regularity lemma\footnote{More recently, Conlon and Fox \cite{CF12} showed that it is also closely related to the regular approximation lemma. This lemma, which arose in the study of graph limits by Lov\'asz and Szegedy \cite{LS07} and also in work on the hypergraph generalization of the regularity lemma by R\"odl and Schacht \cite{RS07}, says that by adding and/or deleting a small number of edges in a graph $G$, we may find another graph $G'$ which admits very fine regular partitions. We refer the reader to \cite{CF12} and \cite{RS10} for further details.} which extended more easily to hypergraphs \cite{T062}. Here we use it to improve the bounds for removal.

\begin{lemma} \label{generalreg}
Let $q:[0,1] \rightarrow \mathbb{R}$ be a convex function, $G$ be a graph with $d=d(G)$, $f:\mathbb{N} \rightarrow [0,1]$ be a decreasing function and $r=\left(dq(1)+(1-d)q(0)-q(d)\right)/ \gamma$. Then there are equitable partitions $P$ and $Q$ with $Q$ a refinement of $P$ satisfying $q(Q) \leq q(P)+\gamma$,  $Q$ is weak $f(|P|)$-regular and $|Q| \leq t_r$, where $t_0=1$, $t_{i}=t_{i-1}R(f(t_{i-1}))$ for $1 \leq i \leq r$ and $R(x)=2^{cx^{-2}}$ as in the Frieze-Kannan weak regularity lemma. 
\end{lemma}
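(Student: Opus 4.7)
The plan is to iterate the Frieze-Kannan weak regularity lemma (Lemma \ref{lem:FK}), using the mean-$q$ density as a monovariant much as in the standard proof of Szemer\'edi's regularity lemma in Section \ref{sec:usualremoval}. The difference is that, rather than stopping when an $\epsilon$-regular partition is reached, we would stop as soon as one further Frieze-Kannan refinement fails to increase the mean-$q$ density by more than $\gamma$. Proposition \ref{firstprop}(3) then caps the number of iterations at $r$.

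Concretely, set $P_0$ to be the trivial partition of $V$ into a single part, so $|P_0|=1=t_0$ and $q(P_0)=q(d)$. Given an equitable partition $P_i$ with $|P_i|\leq t_i$, apply Lemma \ref{lem:FK} to $P_i$ with error parameter $f(|P_i|)$ to obtain an equitable refinement $P_{i+1}$ which is weak $f(|P_i|)$-regular and satisfies $|P_{i+1}|\leq |P_i|\, R(f(|P_i|))$. Since $f$ is decreasing and $R(x)=2^{cx^{-2}}$ is also decreasing in $x$, we obtain $|P_{i+1}|\leq t_i R(f(t_i))=t_{i+1}$, so the size bound propagates by induction. If at some step $i$ we find $q(P_{i+1})\leq q(P_i)+\gamma$, halt and output $P=P_i$, $Q=P_{i+1}$; by construction these satisfy the required equitability, refinement, weak regularity, and density-increment properties.

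To bound the number of iterations, note that by Proposition \ref{firstprop}(3) every partition has $q(P_i)\leq dq(1)+(1-d)q(0)$, while $q(P_0)=q(d)$. If the halting condition were to fail for every $i\in\{0,1,\ldots,r-1\}$, then we would have $q(P_r)>q(P_0)+r\gamma = dq(1)+(1-d)q(0)$, a contradiction. Hence the procedure halts with some $i\leq r-1$, giving $|Q|=|P_{i+1}|\leq t_{i+1}\leq t_r$ (the $t_j$ are nondecreasing since $R\geq 1$). The one slightly fiddly step is the propagation of the size bound $|P_i|\leq t_i$, which depends on the monotonicity of both $f$ and $R$; everything else is a clean energy-increment argument modelled directly on the proof of Lemma \ref{szemreg}.
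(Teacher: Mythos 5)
Your proposal is correct and follows essentially the same route as the paper: iterate the Frieze--Kannan weak regularity lemma starting from the trivial partition, stop the first time the mean-$q$ density fails to increase by more than $\gamma$, and use Proposition \ref{firstprop}(3) to bound the number of iterations by $r$. Your extra care with the propagation of the size bound $|P_i|\leq t_i$ via the monotonicity of $f$ and $R$ is a detail the paper leaves implicit, but it is exactly right.
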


The proof of Lemma \ref{generalreg} is quite similar to the proof of Szemer\'edi's regularity lemma discussed in the previous subsection. One starts with the trivial partition $P_0$ of $V$ into one part. We then apply Lemma \ref{lem:FK} repeatedly to construct a sequence of partitions $P_0,P_1,\ldots$ so that $P_{i+1}$ is weak $f(|P_i|)$-regular. If $q(P_{i+1}) > q(P_i)+\gamma$, then we continue with this process. Otherwise, $q(P_{i+1}) \leq q(P_i)+\gamma$, so we set $Q=P_{i+1}$ and $P=P_i$ and stop the process. This process must stop within $r$ iterations as the third part of Proposition \ref{firstprop} shows that the mean-$q$ density lies in an interval of length $r\gamma$. 

Rather than using the usual $q(x) = x^2$, we will use the convex function $q$ on $[0,1]$ defined by $q(0)=0$ and $q(x)=x \log x$ for $x \in (0,1]$. This entropy function is central to the proof since it captures the extra structural information coming from Lemma~\ref{key} below in a concise fashion. Note that the last part of Proposition \ref{firstprop} implies that $d\log d \leq q(P) \leq 0$ for every partition $P$.

The next lemma is a counting lemma that complements the Frieze-Kannan weak regularity lemma. As one might expect, this lemma gives a global count for the number of copies of $H$, whereas the counting lemma associated with the usual regularity lemma gives a means of counting copies of $H$ between any $v(H)$ parts of the partition which are pairwise regular. Its proof, which we omit, is by a simple telescoping sum argument.

\begin{lemma}\label{weakcount} (\cite{BCLSV}, Theorem 2.7 on page 1809) Let $H$ be a graph on $\{1,\ldots,h\}$ with $m$ edges. 
Let $G=(V,E)$ be a graph on $n$ vertices and $Q:V=V_1 \cup \ldots \cup V_t$ be a vertex partition which is weak $\epsilon$-regular. The number of homomorphisms from $H$ to $G$ is within $\epsilon m n^h$ of $$\sum_{1 \leq i_1,\ldots,i_h \leq t} \, \prod_{(r,s) \in E(H)}d(V_{i_r},V_{i_s})\prod_{a=1}^h |V_{i_a}|.$$
\end{lemma}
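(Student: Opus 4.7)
The plan is to express both quantities as sums over maps $\phi:V(H)\to V(G)$ of products of either edge indicators or density values over the edges of $H$, and then interpolate one edge at a time. Let $A(u,v)=\mathbf{1}_{(u,v)\in E(G)}$ and, for $u\in V_i$, $v\in V_j$, let $\widehat{A}(u,v)=d(V_i,V_j)$. Enumerate the edges of $H$ as $e_1,\ldots,e_m$ and, for $0\le k\le m$, define the hybrid quantity
\[
F_k \;=\; \sum_{v_1,\ldots,v_h\in V}\;\prod_{\ell\le k}\widehat{A}(v_{r_\ell},v_{s_\ell})\,\prod_{\ell>k}A(v_{r_\ell},v_{s_\ell}),
\]
where $e_\ell=(r_\ell,s_\ell)$. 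Then $F_0$ is the number of homomorphisms from $H$ to $G$ and $F_m$ is the displayed sum in the statement, so it suffices to show $|F_{k-1}-F_k|\le \epsilon n^h$ for each $k$ and then sum the telescoping series.

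For a fixed $k$, write $e_k=(r,s)$ and freeze the coordinates $v_i$ for $i\neq r,s$. All of the remaining factors in $F_{k-1}-F_k$ (edges other than $e_k$) are either functions of $v_r$ alone or functions of $v_s$ alone (assuming $H$ is simple, so no other edge of $H$ has both endpoints in $\{r,s\}$) and each takes values in $[0,1]$. Collecting them gives functions $f,g:V\to[0,1]$ such that
\[
F_{k-1}-F_k \;=\; \sum_{v_i:\,i\neq r,s}\ \sum_{u,v\in V} f(u)\,g(v)\,\bigl(A(u,v)-\widehat{A}(u,v)\bigr).
\]
The key estimate is that for every $f,g:V\to[0,1]$ the inner double sum is bounded by $\epsilon|V|^2$ in absolute value. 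This is precisely the weak $\epsilon$-regularity hypothesis extended from $\{0,1\}$-valued indicators to $[0,1]$-valued functions, which follows by writing $f(u)=\int_0^1\mathbf{1}_{f(u)\ge s}\,ds$ and similarly for $g$, applying the set-version of weak regularity to each pair $(\{f\ge s\},\{g\ge t\})$, and integrating over $s,t\in[0,1]$.

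Once this bound is in place, the inner sum over $u,v$ contributes at most $\epsilon n^2$ for each of the $n^{h-2}$ choices of the frozen coordinates, so $|F_{k-1}-F_k|\le \epsilon n^h$. Summing the telescoping identity $F_0-F_m=\sum_{k=1}^m(F_{k-1}-F_k)$ over the $m$ edges of $H$ yields $|F_0-F_m|\le \epsilon m n^h$, which is the claim. The only mildly delicate step is the passage from the set-indexed cut-norm bound to the $[0,1]$-function version, but this is a routine layer-cake argument; the rest of the proof is a clean edge-by-edge swap.
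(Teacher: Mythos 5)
Your proof is correct and is exactly the ``simple telescoping sum argument'' that the paper alludes to (and omits), matching the standard proof in the cited reference: swap one edge at a time from adjacency indicators to block densities, and bound each swap by the weak regularity condition extended to $[0,1]$-valued functions via the layer-cake decomposition. The one mildly delicate point you flag --- that no edge of $H$ other than $e_k$ has both endpoints in $\{r,s\}$, so the remaining factors split as $f(v_r)g(v_s)$ --- is handled correctly, and the uniformity of the cut-norm bound over all $f,g$ takes care of the dependence of $f,g$ on the frozen coordinates.
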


Let $P$ and $Q$ be vertex partitions of a graph $G$ with $Q$ a refinement of $P$. A pair $(V_i,V_j)$ of parts of $P$ is {\it $(\alpha,c)$-shattered} by $Q$ if at least a $c$-fraction of the pairs $(u,v) \in V_i \times V_j$ go between pairs of parts of $Q$ with edge density between them less than $\alpha$. 

One of the key components of the proof is the following lemma, which says that if $P$ and $Q$ are vertex partitions like those given by Lemma~\ref{generalreg}, then there are many pairs of vertex sets in $P$ which are shattered by $Q$.

\begin{lemma}\label{key}
Let $H$ be a graph on $\{1,\ldots,h\}$ with $m$ edges and let $\alpha>0$. Suppose $G$ is a graph on $n$ vertices for which there are less than $\delta n^h$ homomorphisms of $H$ into $G$, where $\delta=\frac{1}{4}\alpha^m(2k)^{-h}$. Suppose $P$ and $Q$ are equitable vertex partitions of $G$ with $|P|=k \leq n$ and $Q$  is a refinement of $P$ which is weak $f(k)$-regular, where $f(k)=\frac{1}{4m}\alpha^m(2k)^{-h}$. For every $h$-tuple $V_1,\ldots,V_h$ of parts of $P$, there is an edge $(i,j)$ of $H$ for which the pair $(V_i,V_j)$ is $(\alpha,\frac{1}{2m})$-shattered by $Q$. 
\end{lemma}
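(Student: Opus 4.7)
The plan is to argue by contradiction. Suppose that for some $h$-tuple $V_1,\ldots,V_h$ of parts of $P$, no edge $(i,j) \in E(H)$ has $(V_i,V_j)$ being $(\alpha, 1/(2m))$-shattered by $Q$. Since $Q$ equitably refines $P$, each $V_a$ decomposes into exactly $t/k$ parts of $Q$ of common size $n/t$, where $t = |Q|$. The shattering assumption then translates directly into the following: for every edge $(r,s)$ of $H$, fewer than a $1/(2m)$ fraction of the $(t/k)^2$ ordered pairs $(W_r,W_s)$ of $Q$-parts with $W_r \subseteq V_r$ and $W_s \subseteq V_s$ satisfy $d(W_r,W_s) < \alpha$. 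A union bound over the $m$ edges of $H$ then shows that at least half of the $(t/k)^h$ ordered tuples $(W_1,\ldots,W_h)$ of $Q$-parts with $W_a \subseteq V_a$ have $d(W_r,W_s) \geq \alpha$ for every $(r,s) \in E(H)$; call these tuples \emph{good}.

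The next step is to apply a version of Lemma \ref{weakcount} localized to homomorphisms $\phi : V(H) \to V(G)$ with $\phi(a) \in V_a$ for each $a$. By the same telescoping proof, this localized count is within $f(k)\,m\,n^h$ of the main term
$$\sum_{W_a \subseteq V_a} \prod_{(r,s) \in E(H)} d(W_r,W_s) \prod_{a=1}^{h} |W_a|.$$
Since each good tuple contributes at least $\alpha^m \cdot (n/t)^h$ to this sum, the main term is at least $\tfrac{1}{2}(t/k)^h \cdot \alpha^m \cdot (n/t)^h = \tfrac{1}{2}\alpha^m (n/k)^h$. By the choice of $f(k)$, the error satisfies $f(k)\,m\,n^h = \tfrac{1}{4}\alpha^m(2k)^{-h} n^h = \delta n^h$, which is smaller than the main term by a factor of $2^{h+1}$. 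Hence the localized homomorphism count is at least $\tfrac{1}{4}\alpha^m (n/k)^h = 2^h \delta n^h \geq 2 \delta n^h$, contradicting the assumption that $G$ contains fewer than $\delta n^h$ homomorphisms of $H$ in total.

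The main subtlety lies in the localization of Lemma \ref{weakcount}: the lemma is stated for the global homomorphism count, but we need to restrict each vertex of $H$ to its designated part $V_a$. This is more of a bookkeeping matter than a genuine obstacle, since the proof of Lemma \ref{weakcount} proceeds by a telescoping in which one edge of $H$ at a time is swapped from an indicator of an edge of $G$ to the corresponding $Q$-density, and each swap is controlled by the weak regularity estimate $\bigl| e(S,T) - \sum_{i,j} |S \cap W_i||T \cap W_j| d(W_i,W_j) \bigr| \leq f(k) |V|^2$ applied uniformly over $S, T \subseteq V$. Imposing $\phi(a) \in V_a$ simply restricts the outer sum and leaves the per-step error of order $f(k) n^h$ unchanged.
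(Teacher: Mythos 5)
Your proof is correct and is essentially the paper's argument in contrapositive form: the paper considers the event that a random tuple in $V_1\times\cdots\times V_h$ has all edges landing between high-density $Q$-parts, shows via the (localized) Lemma~\ref{weakcount} that this event having probability at least $1/2$ would force at least $\delta n^h$ homomorphisms, and then converts the resulting $1/2$-fraction of bad tuples into a shattered edge by the same union bound over the $m$ edges that you use. Your observation about localizing Lemma~\ref{weakcount} to tuples with $\phi(a)\in V_a$ is exactly the step the paper also relies on (it invokes the lemma for "homomorphisms where the copy of vertex $i$ is in $V_i$"), and your justification of it is sound.
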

\begin{proof}
As $|P|=k \leq n$, we have $|V_i| \geq \frac{n}{2k}$ for each $i$. Let $Q_i$ denote the partition of $V_i$ which consists of the parts of $Q$ which are subsets of $V_i$. Consider an $h$-tuple $(v_1,\ldots,v_h) \in V_1 \times \cdots \times V_h$ picked uniformly at random. Also consider the event $E$ that, for each edge $(i,j)$ of $H$, the pair $(v_i,v_j)$ goes between parts of $Q_i$ and $Q_j$ with density at least $\alpha$. If $E$ occurs with probability at least $1/2$, as $Q$ is weak $f(k)$-regular, Lemma \ref{weakcount} implies that the number of homomorphisms of $H$ into $G$ where the copy of vertex $i$ is in $V_i$ for $1 \leq i \leq h$ is at least 
$$\frac{1}{2}\alpha^{m}\prod_{i=1}^h |V_i| - m f(k) n^h \geq \left(\frac{1}{2}\alpha^m(2k)^{-h}-m f(k) \right) n^h = \delta n^h,$$ contradicting that there are less than $\delta n^h$ homomorphisms of $H$ into $G$. So $E$ occurs with probability less than $1/2$. Hence, for at least $1/2$ of the $h$-tuples $(v_1,\ldots,v_h) \in V_1 \times \cdots \times V_h$, there is an edge $(i,j)$ of $H$ such that the pair $(v_i,v_j)$ goes between parts of $Q_i$ and $Q_j$ with density less than $\alpha$. This implies that for at least one edge $(i,j)$ of $H$, the pair $(V_i,V_j)$ is $(\alpha,\frac{1}{2m})$-shattered by $Q$. 
 \end{proof}

We will need the following lemma from \cite{F11} which tells us that if a pair of parts from $P$ is shattered by $Q$ then there is an increment in the mean-entropy density. Its proof is by a simple application of Jensen's inequality. 

\begin{lemma} \label{defectinequality}(\cite{F11}, Lemma 7 on page 570) Let $q:[0,1] \rightarrow \mathbb{R}$ be the convex function given by $q(0)=0$ and $q(x)=x\log x$ for $x>0$.  Let $\epsilon_1,\ldots,\epsilon_r$ and $d_1,\ldots,d_r$ be nonnegative real numbers with $\sum_{i=1}^r \epsilon_i=1$ and $d=\sum_{i=1}^s \epsilon_id_i$. Suppose $\beta<1$ and $I \subset [r]$ is such that $d_i \leq \beta d$ for $i \in I$ and let $s=\sum_{i \in I} \epsilon_i$. Then $$\sum_{i=1}^r \epsilon_i q(d_i) \geq q(d)+(1-\beta+q(\beta))sd.$$
\end{lemma}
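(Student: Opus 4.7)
The plan is to prove this defect form of Jensen's inequality in two stages: first reduce to a two-point convex combination by applying Jensen separately on $I$ and on its complement, then analyze the resulting two-variable expression by checking it is minimized when the constraint $d_i \leq \beta d$ is saturated on $I$, and finally verify the resulting pointwise inequality.

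Concretely, set $d_I = \frac{1}{s}\sum_{i\in I}\epsilon_i d_i$ and $d_{I^c} = \frac{1}{1-s}\sum_{i\notin I}\epsilon_i d_i$, so that $s d_I + (1-s) d_{I^c} = d$ and $d_I \leq \beta d$ by assumption. Applying Jensen's inequality to the convex function $q$ on each of the two sub-averages gives
\[
\sum_{i=1}^r \epsilon_i q(d_i) \;\geq\; s\,q(d_I) + (1-s)\,q(d_{I^c}).
\]
Thus it suffices to show $s q(d_I) + (1-s) q(d_{I^c}) - q(d) \geq (1-\beta + q(\beta)) s d$ under the constraints $s d_I + (1-s) d_{I^c} = d$ and $0 \leq d_I \leq \beta d$. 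Differentiating the left-hand side with respect to $d_I$ along the linear constraint yields $s(q'(d_I) - q'(d_{I^c}))$, which is nonpositive because $q' = 1 + \log$ is increasing and $d_I \leq \beta d < d \leq d_{I^c}$. Hence the minimum is attained at $d_I = \beta d$ and correspondingly $d_{I^c} = d(1-s\beta)/(1-s)$.

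Plugging in these boundary values and using $q(x) = x\log x$, everything involving $\log d$ telescopes and the inequality reduces, after writing $u = s(1-\beta)/(1-s)$ so that $(1-s\beta)/(1-s) = 1+u$, to the single pointwise inequality
\[
(1+u)\log(1+u) - u \;\geq\; 0 \qquad (u \geq 0),
\]
which holds since the left-hand side vanishes at $u=0$ and has derivative $\log(1+u) \geq 0$. This finishes the proof.

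The only substantive step is the monotonicity argument that pushes $d_I$ to the extreme value $\beta d$; once that is in hand, the remaining computation is a routine manipulation of the entropy function together with the elementary inequality $(1+u)\log(1+u) \geq u$. One could alternatively phrase the whole argument as two applications of the log-sum inequality, using the identity $\sum_i \epsilon_i q(d_i) - q(d) = \sum_i \epsilon_i d_i \log(d_i/d)$ that is available for $q(x) = x\log x$, but the Jensen-plus-boundary-optimization route seems the most transparent and is essentially what the phrase \emph{simple application of Jensen's inequality} in the statement suggests.
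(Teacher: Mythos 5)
Your proof is correct, and it matches the route the paper indicates: the survey does not prove this lemma itself but cites \cite{F11} with the remark that it follows from a simple application of Jensen's inequality, which is exactly your two-block Jensen reduction followed by pushing $d_I$ to the boundary value $\beta d$ and verifying $(1+u)\log(1+u)\geq u$. The only (harmless) omissions are the degenerate cases $d=0$, $s=0$ and $s=1$, each of which is immediate.
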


We are now ready to prove Theorem~\ref{thm:removalimproval} in the following precise form. 

\begin{theorem}
Let $H$ be a graph on $\{1,\ldots,h\}$ with $m$ edges. Let $\epsilon>0$ and $\delta^{-1}$ be a tower of twos of height $8 h^4 \log \epsilon^{-1}$. If $G$ is a graph on $n$ vertices in which at least $\epsilon n^2$ edges need to be removed to make it $H$-free, then $G$ contains at least $\delta n^h$ copies of $H$. 
\end{theorem}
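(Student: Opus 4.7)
We prove the contrapositive: assuming that $G$ contains fewer than $\delta n^h$ homomorphic copies of $H$, we exhibit a set of at most $\epsilon n^2$ edges whose removal makes $G$ $H$-free. The plan combines the three tools developed above---Lemma~\ref{generalreg} (iterated weak regularity with entropy), Lemma~\ref{key} (the shattering dichotomy), and Lemma~\ref{defectinequality} (the entropy defect inequality)---with the weak counting lemma Lemma~\ref{weakcount} for bookkeeping.

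Fix a density threshold $\alpha$ of order $\epsilon$ (so that deleting all edges of $G$ lying in pairs of $P$-parts of density below $2\alpha$ costs at most $2\alpha n^2 < \epsilon n^2$), the weak-regularity function $f(k) = \tfrac{1}{4m}\alpha^m (2k)^{-h}$ matching Lemma~\ref{key}, and an entropy gap $\gamma$ of order $1/(h^4\log\epsilon^{-1})$. Apply Lemma~\ref{generalreg} with the entropy function $q(x)=x\log x$ to obtain equitable partitions $P \subseteq Q$ with $Q$ weak $f(|P|)$-regular and $q(Q) \leq q(P)+\gamma$; the number $k = |P|$ of parts is bounded by the tower $t_r$ with $r = O(\gamma^{-1}) = O(h^4\log\epsilon^{-1})$, matching the claimed tower height.

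Now let $G^{\ast}$ be $G$ minus every edge lying in a pair of $P$-parts of density below $2\alpha$; if $G^{\ast}$ is $H$-free we are finished. Otherwise, set $\delta = \tfrac14\alpha^m(2k)^{-h}$ so that Lemma~\ref{key} applies and returns, for every $h$-tuple of parts of $P$, some edge of $H$ whose corresponding pair of parts is $(\alpha,\tfrac{1}{2m})$-shattered by $Q$. Pigeonholing over the $m$ edges of $H$ and the $k^h$ tuples yields at least $k^2/m$ shattered pairs of $P$-parts. Applying Lemma~\ref{defectinequality} (with $\beta = \alpha/d \leq \tfrac{1}{2}$ and $s = \tfrac{1}{2m}$) to each shattered pair of density $d \geq 2\alpha$ contributes an increment of order $d/(m k^2)$ to $q(Q)-q(P)$. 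Since the hypothesis forces the edge density of $G$ to be at least $2\epsilon$, the average density of pairs of $P$-parts is at least $2\epsilon$; a careful accounting shows enough of the shattered pairs lie above the $2\alpha$-threshold that the aggregated increment exceeds $\gamma$, contradicting $q(Q) \leq q(P)+\gamma$. Hence $G^{\ast}$ is $H$-free, and the removal set has size at most $2\alpha n^2 \leq \epsilon n^2$.

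The main obstacle is the quantitative balance of $\alpha$, $\gamma$, and $f$: $\alpha$ must be $\Theta(\epsilon)$ to respect the removal budget, $f$ is forced by Lemma~\ref{key}, and $\gamma$ must be small enough that the entropy increment from dense shattered pairs exceeds it yet no smaller than $1/(h^4\log\epsilon^{-1})$ so that $r = O(1/\gamma)$ respects the stated tower height. The most delicate step is verifying that a sufficient fraction of shattered pairs genuinely have density at least $2\alpha$ rather than being concentrated in already-sparse regions; bridging this---essentially a dyadic tracking of density scales across iterations---is what produces the $\log\epsilon^{-1}$ factor in the tower height, rather than a polynomial in $\epsilon^{-1}$.
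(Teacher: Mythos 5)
Your overall architecture matches the paper's: iterate the Frieze--Kannan lemma via Lemma~\ref{generalreg} with the entropy function, use Lemma~\ref{key} to locate shattered pairs, and use Lemma~\ref{defectinequality} to convert shattering into an entropy increment contradicting $q(Q)\le q(P)+\gamma$. But the final accounting, which you yourself flag as ``the most delicate step,'' is a genuine gap, and your parameters do not balance. First, the shattered pairs produced by Lemma~\ref{key} need not be dense: a pair $(V_a,V_b)$ of density below $\alpha/2$ is automatically $(\alpha,\tfrac{1}{2m})$-shattered by averaging, so for any tuple of parts containing one sparse pair the lemma may simply return that sparse pair, and your pigeonholed $k^2/m$ shattered pairs could consist entirely of pairs whose contribution to $q(Q)-q(P)$ is negligible. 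Knowing only that the average density of $P$-pairs is at least $2\epsilon$ cannot rule this out. Second, with $\gamma=\Theta(1/(h^4\log\epsilon^{-1}))$ the required increment exceeds everything available: even if all $k^2/m$ shattered pairs had density $\Theta(\epsilon)$, Lemma~\ref{defectinequality} yields a total increment of order $\epsilon/m^2$, far below $\gamma$ for small $\epsilon$. Conversely, if you shrink $\gamma$ to $\Theta(\epsilon/h^4)$ so that the increment can beat it, the iteration count in Lemma~\ref{generalreg} applied to $G$ itself becomes $d(G)\log(1/d(G))/\gamma=O(h^4\epsilon^{-1})$, since $d(G)$ may be bounded away from $0$; that gives a tower of height polynomial in $\epsilon^{-1}$, not logarithmic.

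The paper resolves both problems with a single device you are missing: since at least $\epsilon n^2$ edges must be removed, $G$ contains at least $\epsilon n^2/m$ edge-disjoint copies of $H$, and one passes to the subgraph $G'$ consisting exactly of these copies, so that $d(G')\ge\epsilon$ and $G'$ decomposes into $\tfrac{d(G')}{m}n^2$ edge-disjoint copies. This has two effects. The entropy range $d(G')\log(1/d(G'))$ is now $O(\epsilon\log\epsilon^{-1})$, so with $\gamma=d(G')/(2h^4)$ the number of iterations is $2h^4\log(1/d(G'))\le 2h^4\log\epsilon^{-1}$ --- this, and not any dyadic tracking of density scales, is the source of the $\log\epsilon^{-1}$ in the tower height. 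And the edge-disjointness supplies the density accounting: at most $\tfrac{\epsilon}{2m}n^2$ edges of $G'$ lie in $P$-pairs of density below $\epsilon/m$, so at least half of the edge-disjoint copies lie entirely in dense pairs; Lemma~\ref{key} assigns to each such copy an edge lying in a dense shattered pair, and since the copies are edge-disjoint these edges are distinct, so the dense shattered pairs carry at least $\tfrac{d(G')}{2m}n^2$ edges in total. Summing the increments $\tfrac{1}{4m}e(V_a,V_b)/n^2$ from Lemma~\ref{defectinequality} over these pairs gives $q(Q)-q(P)\ge d(G')/(8m^2)>\gamma$, the desired contradiction. Without passing to $G'$ (or an equivalent sparsification), neither the density of the shattered pairs nor the tower height can be controlled, so your argument as written does not close.
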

\begin{proof} 
Suppose for contradiction that there is a graph $G$ on $n$ vertices in which at least $\epsilon n^2$ edges need to be removed from $G$ to delete all copies of $H$, but $G$ contains fewer than $\delta n^h$ copies of $H$. If $n \leq \delta^{-1/h}$, then the number of copies of $H$ in $G$ is less than $\delta n^h \leq 1$, so $G$ is $H$-free, contradicting that at least $\epsilon n^2$ edges need to be removed to make the graph $H$-free. Hence, $n > \delta^{-1/h}$. Note that the number of mappings from $V(H)$ to $V(G)$ which are not one-to-one is $n^h-h!{n \choose h} \leq h^2n^{h-1} < h^2\delta^{1/h}n^h$. Let $\delta'=2h^2\delta^{1/h}$, so the number of homomorphisms from $H$ to $G$ is at most $\delta' n^h$. 

The graph $G$ contains at least $\epsilon n^2/m$ edge-disjoint copies of $H$. Let $G'$ be the graph on the same vertex set which consists entirely of the at least $\epsilon n^2/m$ edge-disjoint copies of $H$. Then $d(G') \geq m \cdot \epsilon /m = \epsilon$ and $G'$ consists of $\frac{d(G')}{m} n^2$ edge-disjoint copies of $H$. We will show that there are at least $\delta' n^h$ homomorphisms from $H$ to $G'$ (and hence to $G$ as well). For the rest of the argument, we will assume the underlying graph is $G'$. 

Let $\alpha=\frac{\epsilon}{8m}$. Apply Lemma \ref{generalreg} to $G'$ with $f(k)=\frac{1}{4m}\alpha^m(2k)^{-h}$ and $\gamma=\frac{d(G')}{2h^4}$. Note that $r$ as in Lemma \ref{generalreg} is $$r=d(G')\log (1/d(G'))/\gamma =  2h^4\log (1/d(G')) \leq 2h^4\log \epsilon^{-1}.$$ Hence, we get a pair of equitable vertex partitions $P$ and $Q$, with $Q$ a refinement of $P$, $q(Q) \leq q(P)+\gamma$, $Q$ is weak $f(|P|)$-regular and $|Q|$ is at most a tower of twos of height $3r \leq 6h^4\log \epsilon^{-1}$.  Let $V_1,\ldots,V_k$ denote the parts of $P$ and $Q_i$ denote the partition of $V_i$ consisting of the parts of $Q$ which are subsets of $V_i$. 

Suppose that $(V_a,V_b)$ is a pair of parts of $P$ with edge density $d=d(V_a,V_b) \geq \epsilon/m$ which is $(\alpha,\frac{1}{2m})$-shattered by $Q$. Note that $\alpha \leq d/8$.  Arbitrarily order the pairs $U_i \times W_i \in Q_a \times Q_b$, letting $d_i=d(U_i,W_i)$ and $\epsilon_i=\frac{|U_i||W_i|}{|V_a||V_b|}$, so that the conditions of Lemma \ref{defectinequality} with $\beta=1/8$ are satisfied. Applying Lemma  \ref{defectinequality}, we get, since $q(\beta) = -\frac{1}{8} \log 8 = - \frac{3}{8}$, that $$q(Q_a,Q_b)-q(V_a,V_b) \geq  (1-\beta+q(\beta))\frac{1}{2m} d(V_a,V_b) |V_a||V_b|/n^2 \geq \frac{1}{4m}e(V_a,V_b)/n^2.$$

Note that $$q(Q)-q(P)=\sum_{1 \leq a,b \leq k} (q(Q_a,Q_b)-q(V_a,V_b)),$$ which shows that $q(Q)-q(P)$ is the sum of nonnegative summands. 

There are at most $\frac{\epsilon}{m}n^2/2$ edges of $G'$ going between pairs of parts of $P$ with density at most $\frac{\epsilon}{m}$. Hence, at least $1/2$ of the edge-disjoint copies of $H$ making up $G'$ have all its edges going between pairs of parts of $P$ of density at least $\frac{\epsilon}{m}$. By Lemma \ref{key}, for each copy of $H$, at least one of its edges goes between a pair of parts of $P$ which is $(\alpha,\frac{1}{2m})$-shattered by $Q$. Thus, $$q(Q)-q(P) \geq \sum  \frac{1}{4m} e (V_a,V_b)/n^2 \geq \frac{1}{4m} \cdot \frac{d(G')}{2m} = \frac{d(G')}{8m^2} > \gamma,$$
where the sum is over all ordered pairs $(V_a,V_b)$ of parts of $P$ which are $(\alpha,\frac{1}{2m})$-shattered by $Q$ and with $d(V_a,V_b) \geq \frac{\epsilon}{m}$. This contradicts $q(Q) \leq q(P)+\gamma$ and completes the proof. 
\end{proof}

\section{The induced removal lemma}

As in the last section, we will again discuss two different proofs of the induced removal lemma, Theorem~\ref{thm:inducedremoval}. In Section~\ref{sec:usualinduced}, we will discuss the proof of Alon, Fischer, Krivelevich and Szegedy \cite{AFKS}, which uses their strong regularity lemma and gives a wowzer-type bound. In Section~\ref{sec:improvedinduced}, we will examine the authors' recent proof~\cite{CF12} of a tower-type bound. We will discuss Alon and Shapira's generalization of the induced removal lemma, which applies to infinite families of graphs, in Section~\ref{sec:infinite}.

\subsection{The usual proof} \label{sec:usualinduced}

For an equitable partition $P = \{V_i|1 \leq i \leq k\}$
of $V(G)$ and an equitable refinement $Q = \{V_{i,j}|1 \leq i \leq
k,1 \leq j \leq \ell\}$ of $P$, we say that $Q$ is {\it
$\epsilon$-close} to $P$ if the following is satisfied. All $1\leq i
\leq i' \leq k$ but at most $\epsilon k^2$ of them are such that, for all $1\leq
j,j' \leq \ell$ but at most $\epsilon \ell^2$ of them, 
$|d(V_i,V_{i'})-d(V_{i,j},V_{i',j'})|<\epsilon$ holds. This notion roughly says
that $Q$ is an approximation of $P$. The strong regularity lemma of  Alon, Fischer, Krivelevich and Szegedy \cite{AFKS} is now as follows.

\begin{lemma}\label{strongreg} {\bf (Strong regularity lemma)} For every
function $f:\mathbb{N} \rightarrow (0,1)$ there exists a number $S = S(f)$ with
the following property. For every graph $G=(V,E)$, there is an equitable
partition $P$ of the vertex set $V$ and an equitable refinement
$Q$ of $P$ with $|Q| \leq S$ such that the
partition $P$ is $f(1)$-regular, the partition $Q$ is
$f(|P|)$-regular and $Q$ is $f(1)$-close to $P$.
\end{lemma}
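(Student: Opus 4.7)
The plan is to build, inductively, a sequence $P_0, P_1, P_2, \ldots$ of equitable partitions in which each $P_{i+1}$ is an equitable refinement of $P_i$, and to output $(P, Q) = (P_i, P_{i+1})$ at the first $i$ for which $P_{i+1}$ is $f(1)$-close to $P_i$. Replacing $f$ by its running minimum, we may assume $f$ is monotonically non-increasing, so that any $f(|P_i|)$-regular partition is automatically $f(1)$-regular. We start with $P_0$ the trivial partition and, at step $i$, produce $P_{i+1}$ by applying a starting-partition variant of Szemer\'edi's regularity lemma (Lemma \ref{szemreg}) with accuracy $f(|P_i|)$ to refine $P_i$; this variant is standard and follows from initialising the iterative proof of Lemma \ref{szemreg} with $P_i$ in place of the trivial partition. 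This immediately guarantees that $P$ and $Q$ are regular with the correct parameters at the moment we stop; the closeness condition itself is the stopping criterion.

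The heart of the argument --- and the main obstacle --- is to show that if $P_{i+1}$ is \emph{not} $f(1)$-close to $P_i$, then there is a fixed mean-square energy increment. Taking $q(x) = x^2$ as in Section \ref{sec:usualremoval}, Proposition \ref{firstprop}(2) gives $q(P_{i+1}) \geq q(P_i)$ for free; the claim is that, in the absence of closeness, one actually has $q(P_{i+1}) - q(P_i) \geq c\, f(1)^4$ for some absolute constant $c > 0$. The proof unpacks the negation of closeness: there are at least $f(1)|P_i|^2$ ``bad'' ordered pairs $(V_a, V_{a'})$ of parts of $P_i$, each of which has at least an $f(1)$-fraction of its refined sub-pairs $(V_{a,j}, V_{a',j'})$ satisfying $|d(V_{a,j}, V_{a',j'}) - d(V_a, V_{a'})| \geq f(1)$. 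Writing $d(V_a, V_{a'})$ as the convex combination of the refined densities and applying the Cauchy--Schwarz defect inequality --- precisely the manipulation used in the fourth line of the displayed chain in the proof of Lemma \ref{keyregclaim} --- each bad pair contributes at least of order $f(1)^3 \cdot |V_a||V_{a'}|/|V|^2 = \Omega(f(1)^3/|P_i|^2)$ to $q(P_{i+1}) - q(P_i)$, using equitability of $P_i$. Summing over all bad pairs yields the claimed $\Omega(f(1)^4)$ increment.

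Since Proposition \ref{firstprop}(3) pins $q(P_i) \in [0,1]$ uniformly, the increment argument forces the process to terminate after at most $O(f(1)^{-4})$ iterations. The final bound $S(f)$ is then obtained by tracking how $|P_i|$ grows across the iterations: each refinement step replaces $|P_i|$ by a quantity that is tower-type in $f(|P_i|)^{-1}$ with base $|P_i|$, so iterating $O(f(1)^{-4})$ times yields a wowzer-type function of $f$, which is the expected order of magnitude for $S(f)$. All of the remaining verifications --- inheritance of equitability and regularity under refinement, and the existence of the starting-partition variant of Lemma \ref{szemreg} --- are standard; the technical content of the lemma is concentrated in the Cauchy--Schwarz bookkeeping that converts failure of $f(1)$-closeness into a definite mean-square energy gain.
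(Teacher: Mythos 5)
Your proposal is correct and follows essentially the same route as the paper: iterate the regularity lemma to produce successive equitable refinements and use a mean-square (energy) increment argument to force termination after boundedly many steps. The only difference is organizational --- the paper replaces $f(1)$-closeness by the condition $q(Q)\leq q(P)+\e$ at the outset and uses that as the stopping rule (converting back to closeness at a fourth-root cost), whereas you stop on closeness directly and show that its failure yields an $\Omega(f(1)^4)$ energy gain; this is the same Cauchy--Schwarz defect computation applied in the contrapositive direction.
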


That is, there is a regular partition $P$ and a refinement $Q$ such that $Q$ is very regular and yet the densities between parts $(V_{i, j}, V_{i',j'})$ of $Q$ are usually close to the densities between the parts $(V_i, V_{i'})$ of $P$ containing them. 

Let $f(1) = \e$. Here, and throughout this section, we let $q(x)=x^2$ be the square function as in the proof of Szemer\'edi's regularity lemma in the previous section. The condition that $Q$ be $\epsilon$-close to $P$ is equivalent,  up to a polynomial change in
$\epsilon$, to $q(Q) \leq q(P)+\epsilon$. Indeed, if $Q$ is $\epsilon$-close to $P$, then $q(Q) \leq q(P) + O(\epsilon)$, while if $q(Q) \leq q(P)+\epsilon$, then $Q$ is $O(\epsilon^{1/4})$-close to $P$. A version of this statement is present in Lemma 3.7 of \cite{AFKS}. As it is sufficient and more convenient to work with mean-square density instead of $\epsilon$-closeness, we do so from now on. That is, we replace the third condition in the regularity lemma with the condition that $q(Q) \leq q(P) + \e$.

With this observation, the proof of the strong removal lemma becomes quite straightforward. Note that we may assume that $f$ is a decreasing function by replacing it, if necessary, with the function given by $f'(i) = \min_{1 \leq j \leq i} f(j)$. We consider a series of partitions $P_1, P_2, \dots$, where $P_1$ is an $f(1)$-regular partition and $P_{i+1}$ is an $f(|P_i|)$-regular refinement of the partition $P_i$. Since $P_{i+1}$ is a refinement of $P_i$ we know that the mean-square density must have increased, that is, $q(P_{i+1}) \geq q(P_i)$. If also $q(P_{i+1}) \leq q(P_i) + \e$ then, since $f$ is decreasing and $P_{i+1}$ is $f(|P_i|)$-regular, we see that all three conditions of the theorem are satisfied with $P = P_i$ and $Q = P_{i+1}$ as the required partitions. Otherwise, we have $q(P_{i+1}) > q(P_i) + \epsilon$. However, since the mean-square density is bounded above by $1$, this can happen at most $\epsilon^{-1}$ times, concluding the proof.

It is not hard to see why this proof results in wowzer-type bounds. At each step, we are applying the regularity lemma to find a partition $P_{i+1}$ which is regular in the number of parts in the previous partition $P_i$. The bounds coming from the regularity lemma then imply that $|P_{i+1}| = T(f(|P_i|)^{-O(1)})$. But this iterated tower-type bound is essentially how we define the wowzer function.

That this is the correct behaviour for the bounds in the strong regularity lemma was proved independently by Conlon and Fox \cite{CF12} and by Kalyanasundaram and Shapira \cite{KSh12}, though both proofs use slightly different ideas and result in slightly different bounds. With the function $f : \mathbb{N} \rightarrow (0,1)$ taken to be $f(n) = \e/n$, the proof given in \cite{CF12} shows that the number of parts in the smaller partition $P$ may need to be as large as  wowzer in a power of $\epsilon^{-1}$, while that given in \cite{KSh12} proves that it must be at least wowzer in $\sqrt{\log \e^{-1}}$. 

The following easy corollary of the strong regularity lemma \cite{AFKS} is the key to proving the induced graph removal lemma.

\begin{lemma}\label{strongeasycor}
For each $0 < \epsilon < 1/3$ and decreasing function $f:\mathbb{N}\rightarrow
(0,1/3)$, there is $\delta'=\delta'(\epsilon,f)$ such that every graph $G=(V,E)$
with $|V| \geq \delta'^{-1}$ has an equitable partition $V=V_1 \cup \ldots \cup V_k$ and vertex subsets $W_i
\subset V_i$ such that $|W_i| \geq \delta' |V|$, each pair $(W_i,W_j)$ with
$1 \leq i \leq j \leq k$ is $f(k)$-regular and all but at most $\epsilon k^2$
pairs $1 \leq i \leq j \leq k$ satisfy $|d(V_i,V_j)-d(W_i,W_j)| \leq \epsilon$.
\end{lemma}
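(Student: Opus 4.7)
The plan is to deduce this from the strong regularity lemma (Lemma~\ref{strongreg}) by a probabilistic selection of the subsets $W_i$ from a refinement of the partition. I would define a function $g:\mathbb{N}\to(0,1)$ by $g(1)=\epsilon/3$ and $g(k)=f(k)/(10k^2)$ for $k\geq 2$ (passing to the decreasing envelope if needed), and then apply Lemma~\ref{strongreg} with this $g$. The output is an equitable partition $P:V=V_1\cup\cdots\cup V_k$ and an equitable refinement $Q$ with $|Q|=L\leq S(g)$ such that $P$ is $g(1)$-regular, $Q$ is $g(k)$-regular, and $Q$ is $g(1)$-close to $P$. Each part $V_i$ is subdivided by $Q$ into $\ell=L/k$ sub-parts of common size approximately $|V|/L$, so taking $\delta'=1/(2L)$ guarantees that every sub-part has size at least $\delta'|V|$ whenever $|V|\geq\delta'^{-1}$.

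I would then choose each $W_i$ uniformly and independently at random among the sub-parts of $V_i$, and show that with positive probability both required conditions hold. Since $g(k)\leq f(k)$, any pair of parts of $Q$ that is $g(k)$-regular is also $f(k)$-regular, so at most $g(k)L^2$ pairs of parts of $Q$ fail to be $f(k)$-regular. Each such bad pair is selected as some $(W_i,W_j)$ with probability at most $1/\ell^2=k^2/L^2$, so the expected number of selected non-$f(k)$-regular pairs is at most $g(k)k^2\leq f(k)/10<1/10$, giving probability at least $9/10$ that every selected pair is $f(k)$-regular. For the density condition, the $g(1)$-closeness of $Q$ to $P$ with $g(1)=\epsilon/3$ says that all but at most $(\epsilon/3)k^2$ pairs $(i,j)$ are \emph{good}, and for each good pair the random sub-indices yield $|d(V_i,V_j)-d(W_i,W_j)|\leq\epsilon/3$ with probability at least $1-\epsilon/3$. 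Markov's inequality then gives that with probability at least $1/2$ at most $(2\epsilon/3)k^2$ good pairs violate the closeness bound, so together with the $(\epsilon/3)k^2$ originally bad pairs the total number of violating pairs is at most $\epsilon k^2$. A union bound over the two events yields success probability at least $9/10+1/2-1>0$, so a valid choice of $W_1,\ldots,W_k$ exists.

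The main technical point, rather than a real obstacle, is simply the parameter balance: $g(k)$ must be small compared to $1/k^2$ so that a union bound over the $O(k^2)$ random pair choices preserves $f(k)$-regularity on every selected pair, while $g(1)$ must be a small fraction of $\epsilon$ so that the Markov step in the closeness argument leaves us within the $\epsilon k^2$ budget. Since $S(g)$ is allowed to depend arbitrarily on $g$ (and hence on $\epsilon$ and $f$), this balance is always achievable, and the resulting $\delta'$ can be taken to be $1/(2S(g))$.
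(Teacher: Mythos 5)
Your random-selection argument is essentially the paper's own derivation of this lemma from the strong regularity lemma, and the parts you prove are proved correctly: choosing each $W_i$ uniformly among the sub-parts of $V_i$, a union bound handles the regularity of the pairs $(W_i,W_j)$ with $i\neq j$, and Markov's inequality combined with the closeness of $Q$ to $P$ handles the density approximation. However, there is a genuine gap: the statement requires every pair $(W_i,W_j)$ with $1\leq i\leq j\leq k$ to be $f(k)$-regular, \emph{including the diagonal pairs} $i=j$, i.e.\ each $W_i$ must be $f(k)$-regular with itself. The $g(k)$-regularity of the partition $Q$ only controls pairs of distinct parts of $Q$; it says nothing about whether a part of $Q$ forms a regular pair with itself, so your union bound never touches the events that some $(W_i,W_i)$ is irregular. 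The paper is explicit that the random-selection argument yields only the weaker version with $1\leq i<j\leq k$, and that the self-regularity is a strictly stronger conclusion than what appears in Alon--Fischer--Krivelevich--Szegedy.

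This is not a removable technicality: the self-regularity is exactly what the induced counting lemma (Lemma~\ref{lem:AFKScount}) needs when two vertices of $H$ are mapped to the same part, so the weaker version does not suffice for the intended application. To close the gap one needs an additional idea, namely a device producing large subsets that are regular with themselves. The paper's route is to apply a (weak) regularity lemma \emph{inside} each candidate piece to obtain many pairwise regular subsets of equal size, then use Ramsey's theorem to extract a subcollection whose pairwise densities all lie in a short interval; the union of that subcollection is regular with itself (this is Lemma~\ref{oneepsilonregularsubset}, and the corresponding footnote to Lemma~\ref{strongeasycor}). You would need to incorporate such a step --- for instance, replacing each randomly chosen sub-part by a further self-regular subset of it of comparable size --- and then re-verify that the pairwise regularity and density conditions survive the passage to these subsets.
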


In fact, Lemma \ref{strongeasycor} is a little bit stronger than the original
version in \cite{AFKS} in that each set $W_i$ is $f(k)$-regular with itself.\footnote{This stronger version may be derived from an extra application of the regularity lemma within each of the pieces $W_i$, together with a suitable application of Ramsey's theorem. This is essentially the process carried out in \cite{AFKS}, though they do not state their final result in the same form as Lemma \ref{strongeasycor}.} The original version follows from the strong regularity lemma, applied with 
$$f'(k) = \min\left(f(k), \frac{\e}{4}, \frac{1}{2} \binom{k+2}{2}^{-1}\right),$$ by taking the
partition $V=V_1 \cup \ldots \cup V_k$ to be the partition $P$ in the
strong regularity lemma and the subset $W_i$ to be a
random part $V_{i, p} \subset V_i$ of the refinement $Q$ of
$P$ in the strong regularity lemma. Since $f'(k) \leq \frac{1}{2} \binom{k+2}{2}^{-1}$, it is
straightforward to check that all pairs $(W_i, W_j)$ are $f(k)$-regular with probability greater than
$\frac{1}{2}$. Moreover, the expected number of pairs with $1 \leq i < j \leq k$ for which 
$|d(V_i, V_j) - d(W_i, W_j)| > \e$ is at most 
$$\frac{\e}{4} \binom{k}{2} + \frac{\e}{4} \binom{k}{2} = \frac{\e}{2} \binom{k}{2}.$$
Here, the two $\frac{\e}{4}$ factors come from the definition of $f(1)$-closeness. The first factor comes from the fact that at most an $\frac{\e}{4}$-fraction of the pairs $(V_i, V_j)$ do not have good approximations while the second factor comes from the fact that for all other pairs there are at most an $\frac{\e}{4}$ fraction of pairs $(W_i, W_j)$ which do not satisfy $|d(V_i, V_j) - d(W_i, W_j)| \leq \e$. Therefore, by Markov's inequality, the probability that the number of bad pairs is greater than $\e \binom{k}{2}$ is
less than $\frac{1}{2}$. We therefore see that with positive probability there is a choice of $W_i$ satisfying the required weaker version of Lemma \ref{strongeasycor}.

If we assume the full strength of Lemma \ref{strongeasycor} as stated, that is, that each $W_i$ is also $f(k)$-regular with itself, it is easy to deduce the induced removal lemma.
Let $h = |V(H)|$ and take $f(k)=\frac{\epsilon^h}{4h}$. If there is a mapping
$\phi:V(H) \rightarrow \{1,\ldots,k\}$ such that for all adjacent vertices
$v,w$ of $H$, the edge density between $W_{\phi(v)}$ and $W_{\phi(w)}$ is at
least $\epsilon$ and for all distinct nonadjacent vertices $v,w$ of $H$, the
edge density between $W_{\phi(v)}$ and $W_{\phi(w)}$ is at most $1-\epsilon$,
then the following standard counting lemma (see, e.g., Lemma 3.2 in Alon, Fischer, Krivelevich and Szegedy \cite{AFKS} for a minor variant) 
shows that $G$ contains at least $\delta n^h$
induced copies of $H$, where $\delta=\frac{1}{h!}(\epsilon/4)^{{h \choose 2}}\delta'^h$. As with Lemma \ref{lem:AFKSusualcount}, the standard proof of this counting lemma uses a greedy embedding strategy.

\begin{lemma} \label{lem:AFKScount}
If $H$ is a graph with vertices $1,\ldots,h$ and $G$ is a graph with not
necessarily disjoint vertex subsets $W_1,\ldots,W_h$ such that every pair
$(W_i,W_j)$ with $1 \leq i < j \leq h$ is $\gamma$-regular with $\gamma \leq
\frac{\eta^h}{4h}$, $|W_i| \geq \gamma^{-1}$ for $1 \leq i \leq h$ and, for $1
\leq i < j \leq k$, $d(W_i,W_j)>\eta$ if $(i,j)$ is an edge of $H$ and
$d(W_i,W_j)<1-\eta$ otherwise, then $G$ contains at least
$\left(\frac{\eta}{4}\right)^{{h \choose 2}}|W_1| \times \cdots \times |W_h|$
induced copies of $H$ with the copy of vertex $i$ in $W_i$.
\end{lemma}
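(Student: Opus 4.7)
The plan is a standard sequential (greedy) embedding. I construct a labelled induced copy of $H$ by choosing images $v_1 \in W_1, v_2 \in W_2, \ldots, v_h \in W_h$ one vertex at a time, and I count the number of valid choices available at each step. For $0 \leq i \leq h$ and $j > i$, let the \emph{candidate set} $W_j^{(i)} \subseteq W_j$ consist of those $w \in W_j$ with $w \in N(v_k)$ whenever $(k,j) \in E(H)$ and $w \notin N(v_k)$ whenever $(k,j) \notin E(H)$, for every $k \leq i$. In particular $W_j^{(0)} = W_j$. Provided that at each step $i$ we pick $v_i \in W_i^{(i-1)}$, the resulting tuple $(v_1,\ldots,v_h)$ is automatically a labelled induced copy of $H$ with vertex $i$ placed in $W_i$, so it suffices to lower bound the number of such tuples.

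The invariant I would maintain is that for every $0 \leq i \leq h-1$ and every $j > i$, $|W_j^{(i)}| \geq (\eta/2)^{i}|W_j|$. To advance from step $i-1$ to step $i$, call a vertex $v \in W_i$ \emph{bad for $j$} (for $j > i$) if the set obtained by projecting $W_j^{(i-1)}$ through $v$ (that is, intersecting with $N(v)$ when $(i,j) \in E(H)$, or with its complement otherwise) has size strictly less than $(d(W_i,W_j)-\gamma)|W_j^{(i-1)}|$, respectively $(1-d(W_i,W_j)-\gamma)|W_j^{(i-1)}|$. Because the invariant, combined with the hypothesis $\gamma \leq \eta^h/(4h)$, ensures $|W_j^{(i-1)}| \geq \gamma |W_j|$, the $\gamma$-regularity of the pair $(W_i, W_j)$ is applicable and bounds the number of vertices of $W_i$ bad for $j$ by $\gamma|W_i|$. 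A union bound over the at most $h-i$ choices of $j>i$ gives at most $(h-i)\gamma|W_i|$ bad vertices in total. Any $v_i \in W_i^{(i-1)}$ that is not bad for some $j > i$ is a valid choice and preserves the invariant, since $d(W_i, W_j) - \gamma \geq \eta/2$ for edges of $H$ and $1 - d(W_i, W_j) - \gamma \geq \eta/2$ for non-edges.

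The number of valid choices at step $i$ is therefore at least $|W_i^{(i-1)}| - (h-i)\gamma|W_i| \geq \tfrac{1}{2}(\eta/2)^{i-1}|W_i|$, where the lower bound combines the invariant with the smallness of $\gamma$ (and the hypothesis $|W_i|\geq\gamma^{-1}$ rules out the degenerate case where ceiling effects matter). Multiplying these lower bounds for $i = 1, \ldots, h$ produces at least $2^{-h}(\eta/2)^{\binom{h}{2}}\prod_{i=1}^h|W_i| \geq (\eta/4)^{\binom{h}{2}}\prod_{i=1}^h|W_i|$ labelled induced copies of $H$, as required. The only step that genuinely requires care is the choice of $\gamma$: the bound $\gamma \leq \eta^h/(4h)$ is tuned to keep every candidate set above the regularity threshold $\gamma|W_j|$ throughout all $h-1$ updates and to make the additive union-bound loss $(h-i)\gamma|W_i|$ negligible compared to the multiplicative guarantee on $|W_i^{(i-1)}|$. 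Handling non-edges of $H$ alongside edges introduces no new difficulty, because $\gamma$-regularity applies symmetrically to neighbourhoods and their complements; this symmetry is the sole distinction from the proof of the analogous counting lemma (Lemma~\ref{lem:AFKSusualcount}) for ordinary, non-induced copies.
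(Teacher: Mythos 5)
Your overall strategy---a sequential greedy embedding with candidate sets---is exactly the argument the paper has in mind (it only sketches this lemma, deferring to Lemma 3.2 of \cite{AFKS}), but your bookkeeping has a genuine quantitative flaw. The invariant $|W_j^{(i)}| \geq (\eta/2)^{i}|W_j|$ is too lossy to support the two places where you use it. A good choice of $v_i$ shrinks each candidate set by a factor of at least $\eta-\gamma$, i.e.\ by an \emph{additive} loss of $\gamma$ per step, whereas your invariant throws away a multiplicative factor of $2$ at every step; the accumulated $2^{-(i-1)}$ is not absorbed by the slack $1/(4h)$ in the hypothesis $\gamma \leq \eta^h/(4h)$. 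Concretely, take $\eta = 0.9$, $h = 10$ and $\gamma = \eta^h/(4h) \approx 8.7\times 10^{-3}$: then $(\eta/2)^{h-2} = 0.45^{8} \approx 1.7\times 10^{-3} < \gamma$, so at the step where you embed $v_{h-1}$ your invariant no longer guarantees $|W_h^{(h-2)}| \geq \gamma |W_h|$ and you cannot invoke $\gamma$-regularity. The claim that ``the invariant, combined with the hypothesis $\gamma \leq \eta^h/(4h)$, ensures $|W_j^{(i-1)}| \geq \gamma|W_j|$'' is therefore false as stated, and the inequality $|W_i^{(i-1)}| - (h-i)\gamma|W_i| \geq \tfrac{1}{2}(\eta/2)^{i-1}|W_i|$ fails for the same numerical reason.

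The gap is repairable because the actual candidate sets are much larger than your invariant records. Keep instead $|W_j^{(i)}| \geq (\eta-\gamma)^{i}|W_j|$. Since $\gamma \leq \eta^h/(4h)$ gives $i\gamma/\eta \leq h\gamma/\eta \leq \eta^{h-1}/4 \leq 1/4$, one has $(\eta-\gamma)^{i} \geq \eta^{i}(1 - i\gamma/\eta) \geq \tfrac{3}{4}\eta^{i}$, and $\tfrac{3}{4}\eta^{h-2} \geq \eta^{h}/(4h) \geq \gamma$, so the regularity threshold is met at every step. The number of good choices at step $i$ is then at least $(\eta-\gamma)^{i-1}|W_i| - (h-i)\gamma|W_i| \geq \tfrac{3}{4}\eta^{i-1}|W_i| - \tfrac{1}{4}\eta^{h}|W_i| \geq \tfrac{1}{2}\eta^{i-1}|W_i|$, and the product over $i$ is $2^{-h}\eta^{\binom{h}{2}}\prod_i |W_i| \geq (\eta/4)^{\binom{h}{2}}\prod_i|W_i|$ for $h \geq 3$ (the cases $h \leq 2$ being immediate). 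One further small point worth a sentence in a complete write-up: since the $W_i$ may overlap and non-edges of $H$ do not force $v_j \neq v_i$, you should also exclude the at most $h$ previously chosen vertices at each step to ensure the tuples you count are genuine (injective) induced copies; as $h \leq h\gamma|W_i|$ by the hypothesis $|W_i| \geq \gamma^{-1}$, this costs only another additive $h\gamma|W_i|$, which the same slack absorbs.
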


Hence, we may
assume that there is no such mapping $\phi$.
We then delete the edges between $V_i$ and $V_j$ if the edge density between $W_i$
and $W_j$ is less than $\epsilon$ and add the edges between $V_i$ and
$V_j$ if the density between $W_i$ and $W_j$ is more than $1-\epsilon$. The
total number of edges added or removed is at most $5\epsilon n^2$ and no
induced copy of $H$ remains. Replacing $\epsilon$ by $\epsilon/8$ in the above argument gives the induced removal lemma. 

\subsection{An improved bound} \label{sec:improvedinduced}

The main goal of this section is to prove Theorem \ref{thm:inducedimproval}, which gives a bound on $\delta^{-1}$ which is a tower in $h$ of height polynomial in $\epsilon^{-1}$. We in fact prove the key corollary of the strong regularity lemma, Lemma \ref{strongeasycor}, with a tower-type bound.
This is sufficient to prove the desired tower-type bound for the induced graph removal lemma. 

As in Section \ref{sec:improvedremoval}, the key idea will be to take a weak variant of Szemer\'edi's regularity lemma and iterate it. The particular variant we will use, due to Duke, Lefmann and R\"odl \cite{DLR}, was originally used by them to derive a fast approximation algorithm for the number of copies of a fixed graph in a large graph.

A {\it $k$-cylinder} (or cylinder for short) in a graph $G$ is a product of $k$ vertex subsets. 
Given a $k$-partite graph $G=(V,E)$ with $k$-partition $V=V_1 \cup \ldots \cup
V_k$, we will consider a partition $\mathcal{K}$ of the cylinder $V_1 \times
\cdots \times V_k$ into cylinders $K=W_1 \times \cdots \times W_k$, $W_i
\subset V_i$ for $i=1,\ldots,k$ and we let $V_i(K)=W_i$.  We say that a cylinder is {\it $\epsilon$-regular} if all ${k \choose 2}$
pairs of subsets $(W_i,W_j)$, $1 \leq i < j \leq k$, are $\epsilon$-regular.
The partition $\mathcal{K}$ is {\it $\epsilon$-regular} if all but an
$\epsilon$-fraction of the $k$-tuples $(v_1,\ldots,v_k) \in V_1 \times \cdots
\times V_k$ are in $\epsilon$-regular cylinders in the partition $\mathcal{K}$.

The weak regularity lemma of Duke, Lefmann and R\"odl \cite{DLR} is now as
follows. Note that, like the Frieze-Kannan weak regularity lemma, it has only a
single-exponential bound on the number of parts. We will sometimes refer to this 
lemma as the cylinder regularity lemma.

\begin{lemma}\label{dukelefrod}
Let $0<\epsilon<1/2$ and $\beta=\beta(\epsilon)=\epsilon^{k^2\epsilon^{-5}}$.
Suppose $G=(V, E)$ is a $k$-partite graph with $k$-partition $V=V_1 \cup \ldots
\cup V_k$. Then there exists an $\epsilon$-regular partition $\mathcal{K}$ of
$V_1 \times \cdots \times V_k$ into at most $\beta^{-1}$ parts such that, for
each $K \in \mathcal{K}$ and $1 \leq i \leq k$, $|V_i(K)| \geq
\beta|V_i|$.
\end{lemma}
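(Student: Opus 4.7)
The plan is to run an iterative density-increment argument on cylinder partitions, exactly parallel to the proof of Szemer\'edi's regularity lemma (Lemma~\ref{szemreg}) but with cylinders in place of pairs of parts. I would start with the trivial cylinder partition $\mathcal{K}_0 = \{V_1 \times \cdots \times V_k\}$ and track the mean-square density potential
\[\Phi(\mathcal{K}) = \sum_{K \in \mathcal{K}} w(K) \sum_{1 \leq i < j \leq k} d(V_i(K), V_j(K))^2,\]
where $w(K) = \prod_{a=1}^k |V_a(K)|/\prod_{a=1}^k |V_a|$ is the fraction of $k$-tuples lying in the cylinder $K$. Note that $0 \le \Phi \le \binom{k}{2}$ throughout the process.

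One step of the iteration runs as follows. If the current $\mathcal{K}$ is not $\epsilon$-regular, then cylinders containing at least one $\epsilon$-irregular pair of sides together cover more than an $\epsilon$-fraction of all $k$-tuples. For each such bad cylinder $K$, I would pick a witnessing pair $(i,j)$ together with Szemer\'edi witness subsets $V_i'(K) \subset V_i(K)$ and $V_j'(K) \subset V_j(K)$ of relative size at least $\epsilon$ whose density differs from $d(V_i(K), V_j(K))$ by at least $\epsilon$, and refine $K$ by splitting each of $V_i(K)$ and $V_j(K)$ into two parts accordingly, leaving the other $k-2$ sides untouched; this produces at most $4$ sub-cylinders of $K$. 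The key quantitative step is that the Cauchy--Schwarz defect inequality, exactly as in the proof of Lemma~\ref{keyregclaim}, then shows that the $(i,j)$-contribution to $\Phi$ from $K$ grows by at least $w(K)\epsilon^4$, and by the second part of Proposition~\ref{firstprop} the contributions from the remaining $\binom{k}{2}-1$ pairs can only grow or stay the same. Summing over all bad cylinders, whose total weight is at least $\epsilon$, the overall increment per iteration is at least $\epsilon^5$.

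Since $\Phi$ is capped at $\binom{k}{2}$, the process terminates with an $\epsilon$-regular partition after at most $t \le k^2\epsilon^{-5}$ iterations. Each iteration at most quadruples the number of cylinders, so the final partition has at most $4^t \leq \epsilon^{-k^2\epsilon^{-5}} = \beta^{-1}$ parts (using $\epsilon < 1/2$, so $4 \leq \epsilon^{-2}$). Since each time a side $V_i(K)$ is split the surviving piece retains relative size at least $\epsilon$, after $t$ iterations every cylinder $K$ satisfies $|V_i(K)| \geq \epsilon^t |V_i| \geq \beta|V_i|$, as required.

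The main obstacle I expect is bookkeeping rather than any deep new idea: one must verify that refining a single cylinder along a single pair of coordinates leaves the cylinder structure of all other parts of the partition intact and that the contributions to $\Phi$ from pairs $(i',j') \neq (i,j)$ really cannot decrease, and one must balance the number of iterations against the splitting factor per iteration to land inside the stated bound $\beta^{-1}$. Both checks are routine provided the refinement is performed one cylinder and one coordinate pair at a time.
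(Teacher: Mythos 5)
Your strategy---a mean-square-density increment on cylinder partitions, splitting each irregular cylinder along one witnessing pair of coordinates---is exactly the standard route to this lemma (and the one taken by Duke, Lefmann and R\"odl), and the potential function, the Cauchy--Schwarz defect step as in Lemma~\ref{keyregclaim}, the convexity argument for the untouched pairs via Proposition~\ref{firstprop}, and the termination argument are all sound. One bookkeeping remark: you should record the sharper iteration bound $t \le \binom{k}{2}\epsilon^{-5} \le \tfrac{1}{2}k^2\epsilon^{-5}$, since with only $t \le k^2\epsilon^{-5}$ your estimate $4^t \le \epsilon^{-2t}$ lands at $\beta^{-2}$ rather than $\beta^{-1}$; with $2t \le k^2\epsilon^{-5}$ both the part count $4^t \le \beta^{-1}$ and (once the issue below is repaired) the side bound $\epsilon^t \ge \beta$ come out correctly.

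The one genuine gap is your final claim that ``each time a side $V_i(K)$ is split the surviving piece retains relative size at least $\epsilon$.'' The definition of irregularity bounds the witness sets only from below, so the witness $V_i'(K)$ may occupy all but a single vertex of $V_i(K)$; the cylinders built over the complement $V_i(K)\setminus V_i'(K)$ then violate the required bound $|V_i(K)|\ge\beta|V_i|$ regardless of how few iterations are performed, so the step as written fails. The standard repair is to shrink the witnesses before splitting: if $(S,T)$ is not $\epsilon$-regular with witnesses $S',T'$, then averaging $d(S'',T')$ over all $S''\subseteq S'$ with $|S''|=\lceil\epsilon|S|\rceil$ shows that some such $S''$ keeps $d(S'',T')$ on the same side of $d(S,T)$ as $d(S',T')$, so $|d(S'',T')-d(S,T)|>\epsilon$ persists; doing the same on the $T$-side yields witnesses of size exactly $\lceil\epsilon|S|\rceil$ and $\lceil\epsilon|T|\rceil$. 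Since $\epsilon<1/2$, both pieces of each split then have relative size at least $\epsilon$ (modulo trivial rounding when the sides are of bounded size, where there is ample slack against the astronomically small $\beta$), the defect increment $w(K)\epsilon^4$ is unaffected, and your conclusion $|V_i(K)|\ge\epsilon^{t}|V_i|\ge\beta|V_i|$ goes through. With this insertion the argument is complete.
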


We would now like to iterate this lemma to get a stronger version, the strong cylinder regularity lemma. Like Lemmas \ref{generalreg} and \ref{strongreg}, this will yield two closely related cylinder partitions $P$ and $Q$ with $P$ regular and $Q$ regular in a function of $|P|$. To state the lemma, we first strengthen the definition of regular cylinders so that pieces are also regular with themselves.

A $k$-cylinder $W_1 \times \cdots \times W_k$ is {\it strongly
$\epsilon$-regular} if all pairs $(W_i,W_j)$ with $1 \leq i,j \leq k$ are
$\epsilon$-regular. A partition $\mathcal{K}$ of $V_1 \times \cdots \times V_k$
into cylinders is {\it strongly $\epsilon$-regular} if all but $\epsilon|V_1| \times
\cdots \times |V_k|$ of the $k$-tuples $(v_1,\ldots,v_k) \in V_1 \times \cdots \times
V_k$ are contained in strongly $\epsilon$-regular cylinders $K \in
\mathcal{K}$.

We now state the strong cylinder regularity lemma. Here $t_i(x)$ is a variant of the tower function defined by $t_0(x) = x$ and $t_{i+1}(x) = 2^{t_i(x)}$. Also, given a cylinder partition $\mathcal{K}$, $Q(\mathcal{K})$ is the coarsest vertex partition such that every set $V_i(K)$ with $i \in [k]$ and $K \in \mathcal{K}$ is the union of parts of $Q(\mathcal{K})$.

\begin{lemma} \label{scr2}
For $0<\epsilon<1/3$, positive integer $s$, and decreasing function $f:\mathbb{N} \rightarrow
(0,\epsilon]$, there is $S=S(\epsilon,s,f)$ such that the following holds. For every
graph $G$, there is an integer $s \leq k \leq S$, an equitable  partition $P:V=V_1 \cup
\ldots \cup V_k$ and a strongly $f(k)$-regular partition $\mathcal{K}$ of the
cylinder $V_1 \times \cdots \times V_k$ into cylinders satisfying that the
partition $Q=Q(\mathcal{K})$ of $V$ has at most $S$ parts and $q(Q) \leq
q(P)+\epsilon$. Furthermore, there is an absolute constant $c$ such that
letting $s_1=s$ and $s_{i+1}=t_4\left(\left(s_i/f(s_i)\right)^c\right)$, we may
take $S=s_{\ell}$ with $\ell=2\epsilon^{-1}+1$.
\end{lemma}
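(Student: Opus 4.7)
\textbf{Proof plan for Lemma \ref{scr2}.} The strategy is to iterate the Duke--Lefmann--R\"odl cylinder regularity lemma (Lemma \ref{dukelefrod}) in exactly the same way that the standard strong regularity lemma is obtained by iterating Szemer\'edi's regularity lemma, using the mean-square density $q(P)=\sum_{i,j}d(V_i,V_j)^2 |V_i||V_j|/|V|^2$ as the monovariant. Starting from an arbitrary equitable partition $P_0$ of $V$ into $s$ parts, I will build a sequence $P_0, P_1, \dots$ of equitable vertex partitions with $q(P_{i+1}) \geq q(P_i) + \varepsilon/4$; since $q$ is bounded by $1$, the sequence terminates after at most $\ell = 2\varepsilon^{-1}+1$ steps, and the final step produces the desired pair $(P,\mathcal{K})$.

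The inductive step is as follows. Given an equitable partition $P_i : V = V_1 \cup \dots \cup V_{k_i}$, apply Lemma \ref{dukelefrod} to the $k_i$-partite cylinder $V_1 \times \dots \times V_{k_i}$ with regularity parameter $f(k_i)$. This returns an $f(k_i)$-regular cylinder partition $\mathcal{K}_i$ with at most $\beta(f(k_i))^{-1}$ pieces, each of side at least $\beta(f(k_i)) |V_j|$. To upgrade to \emph{strong} $f(k_i)$-regularity (i.e. to also make the within-block pairs $(W_j,W_j)$ regular), apply Lemma \ref{dukelefrod} once more inside each $V_j$, partitioning $V_j\times V_j$ and common-refining the resulting side-partitions into $\mathcal{K}_i$; this costs one extra exponential in $k_i/f(k_i)$. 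Now form the coarsest vertex partition $Q_i = Q(\mathcal{K}_i)$ refining $P_i$ for which every $V_j(K)$ is a union of parts. If $q(Q_i) \leq q(P_i) + \varepsilon/2$, stop and output $P := P_i$, $\mathcal{K} := \mathcal{K}_i$, and $Q := Q_i$; since $\mathcal{K}_i$ was produced with parameter $f(k_i) = f(|P|)$, the strong regularity requirement is met. Otherwise, use Lemma \ref{makeequip} to replace $Q_i$ by an equitable partition $P_{i+1}$ refining $Q_i$ with $q(P_{i+1}) \geq q(Q_i) - \varepsilon/4 \geq q(P_i) + \varepsilon/4$, and proceed to the next step.

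Termination in $\ell = 2\varepsilon^{-1}+1$ steps is automatic from Proposition \ref{firstprop}(3). The growth of $k_i$ is controlled by the bound in Lemma \ref{dukelefrod}: the main Duke--Lefmann--R\"odl step pushes $k_i$ up by a factor $\beta(f(k_i))^{-1} = f(k_i)^{-k_i^2 f(k_i)^{-5}}$, which is a double exponential in $\operatorname{poly}(k_i/f(k_i))$; the extra within-block application and the equitability step (Lemma \ref{makeequip}, which multiplies by $O(|Q_i|/\varepsilon)$) contribute two further exponential levels, giving $k_{i+1} \leq t_4((k_i/f(k_i))^c)$ for an absolute constant $c$. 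Setting $s_1 = s$ and $s_{i+1} = t_4((s_i/f(s_i))^c)$ and using that $f$ is decreasing, we obtain $k_i \leq s_i$ at every step and hence $S = s_\ell$ as required.

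\textbf{Main obstacle.} The cleanest part is the density-increment iteration and the counting of $\ell \leq 2\varepsilon^{-1}+1$ rounds. The delicate point is making sure that at the \emph{stopping} step the cylinder partition $\mathcal{K}$ we return is strongly $f(|P|)$-regular rather than merely $f(|P_i|)$-regular for some earlier $P_i$, and that the extra operations needed to enforce strong regularity and equitability do not destroy the density increment. The former is handled by always applying Lemma \ref{dukelefrod} to the current $P_i$ with parameter $f(k_i)$, so that regularity and the parameter $k = |P|$ are synchronized; the latter by choosing the $t$ in Lemma \ref{makeequip} to satisfy $|Q_i|/t < \varepsilon/8$, which only increases $|P_{i+1}|$ by an additional single-exponential factor already absorbed into $t_4$ in the recursion defining $s_{i+1}$.
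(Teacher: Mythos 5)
Your iteration scheme (density increment via $q$, stopping rule, step count, and the $t_4$ growth bookkeeping) matches the paper's proof. But there is a genuine gap at exactly the point the paper identifies as the main difficulty: making the cylinder pieces regular \emph{with themselves}. You propose to achieve strong regularity by "applying Lemma \ref{dukelefrod} once more inside each $V_j$, partitioning $V_j\times V_j$ and common-refining the resulting side-partitions." This does not work. The Duke--Lefmann--R\"odl lemma applied to $V_j\times V_j$ produces cylinders $W\times W'$ in which the \emph{cross} pair $(W,W')$ is regular, with $W\neq W'$ in general; regularity of $(W,W')$ implies nothing about regularity of $(W,W)$, of $(W',W')$, or of $(W\cap W', W\cap W')$, and taking the common refinement of the two side-partitions does not restore it. Self-regularity cannot be extracted from cross-regularity by refinement alone --- this is why the paper devotes Lemmas \ref{oneepsilonregularsubset} and \ref{epsdeltaone} to it, and why the proof of Lemma \ref{oneepsilonregularsubset} needs a Ramsey-type step (finding a subcollection of pairwise regular sets whose pairwise densities all lie in a short interval, so that their \emph{union} is regular with itself).

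The paper's actual mechanism is: first partition each part $V_i$ of $P_j$ into large pieces $V_{i1},\dots,V_{ih_i}$, each $\gamma$-regular with itself (Lemma \ref{epsdeltaone}, with $\gamma=f(k)\beta$), and only then apply Lemma \ref{dukelefrod} to each sub-cylinder $V_{1\ell_1}\times\cdots\times V_{k\ell_k}$. The pieces $V_{i\ell_i}(K)$ of the resulting cylinders have relative size at least $\beta$ inside the self-regular sets $V_{i\ell_i}$, and a subset of relative size $\geq\beta$ of a $\gamma$-regular set is $\gamma/\beta=f(k)$-regular with itself; this is what delivers \emph{strong} $f(k)$-regularity. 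Your proposal is missing this ingredient entirely. (Separately and much more minor: with increments of $\epsilon/4$ per step you get termination in $4\epsilon^{-1}$ rounds, not the $\ell=2\epsilon^{-1}+1$ in the statement; the paper uses increments of $\epsilon/2$.)
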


To prove this lemma, we need to find a way to guarantee that the parts of the cylinder partition are regular with themselves as required in the definition of strong cylinder regularity. For a graph $G=(V,E)$, a vertex subset $U \subset V$ is {\it $\epsilon$-regular} if the pair $(U,U)$ is $\epsilon$-regular. The following
lemma, which demonstrates that any graph contains a large vertex subset which is
$\epsilon$-regular, is the first step. 

\begin{lemma}\label{oneepsilonregularsubset}
For each $0<\epsilon<1/2$, let
$\delta=\delta(\epsilon)=2^{-\epsilon^{-(10/\epsilon)^{4}}}$. Every graph
$G=(V,E)$ contains an $\epsilon$-regular vertex subset $U$ with $|U| \geq
\delta |V|$.
\end{lemma}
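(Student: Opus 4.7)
The plan is to prove this by an iterative density-increment argument, building a nested chain $V = U_0 \supset U_1 \supset \cdots \supset U_N$ of vertex subsets with the property that either $(U_i,U_i)$ is $\e$-regular (so we terminate) or we can pass to a subset $U_{i+1} \subset U_i$ with $|U_{i+1}| \geq \beta(\e)\,|U_i|$ for a single-exponentially small $\beta(\e) = 2^{-\poly(1/\e)}$, while an appropriate monovariant strictly increases.

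First I would extract a witness. If $(U_i, U_i)$ is not $\e$-regular, pick $A, B \subset U_i$ with $|A|, |B| \geq \e|U_i|$ and $|d(A, B) - d(U_i, U_i)| > \e$. Consider the four-part partition of $U_i$ generated by $\{A,B\}$, namely $A\cap B$, $A\setminus B$, $B\setminus A$, $U_i \setminus (A\cup B)$. A defect Cauchy--Schwarz inequality of exactly the type used in the proof of Lemma~\ref{keyregclaim} shows that the mean-square density of this partition exceeds $d(U_i, U_i)^2$ by at least $\Omega(\e^5)$. The point is to then pass to a single subset $U_{i+1}$ built out of these four parts (a single part, or a union of two of them) so that: (i) $|U_{i+1}| \geq \beta(\e)|U_i|$; and (ii) a \emph{global} monovariant $\Phi$ strictly grows by $\Omega(\e^c)$.

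For the monovariant I would track the mean-square density $\Phi$ of the partition of the \emph{original} vertex set $V$ induced by the accumulated family of witness subsets $\{A_j, B_j : j \leq i\}$, together with the current set $U_i$. By Proposition~\ref{firstprop}(2) this quantity is monotone under refinement and lies in $[0,1]$, so it can increase by $\Omega(\e^5)$ only $O(\e^{-5})$ times. The iteration therefore terminates after $N$ steps for some $N = N(\e)$, and combining this with the per-step shrinkage $|U_{j+1}|/|U_j| \geq \beta(\e)$ yields $|U_N| \geq \beta(\e)^{N} |V| \geq \delta |V|$ with $\delta$ of the stated form, once $\beta(\e)$ is extracted from whichever weak regularity step is used internally.

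The main obstacle is precisely the transition from ``the 4-part partition has extra mean-square energy'' to ``a single \emph{self}-paired subset $U_{i+1}$ drives $\Phi$ up by a definite amount.'' A Cauchy--Schwarz defect only locates a pair of parts $(P_j, P_k)$ with inflated density; extracting a self-regular improvement requires either taking a carefully chosen $U_{i+1} = P_j \cup P_k$ and tracking a joint density-plus-partition energy, or else applying the cylinder regularity lemma (Lemma~\ref{dukelefrod}) with $k=2$ to the doubled bipartite graph $G[U_i] \sqcup G[U_i]$ at each step. In the latter implementation one picks an $\e$-regular cylinder $W_1 \times W_2$ inside $U_i \times U_i$ with $|W_1 \cap W_2|$ controlled, and sets $U_{i+1} = W_1 \cap W_2$; this pays a factor of $\beta = \e^{\poly(1/\e)}$ per step, and since the iteration runs $\poly(1/\e)$ times the composition gives the double-exponential bound in the statement of the lemma.
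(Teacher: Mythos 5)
Your overall framework --- a density-increment iteration along a shrinking chain $V = U_0 \supset U_1 \supset \cdots$ --- has a structural flaw that no choice of per-step construction repairs. Your monovariant $\Phi$ is the mean-square density of a partition of the \emph{original} vertex set $V$. When you refine only inside $U_i$, Proposition~\ref{firstprop} does give a nonnegative increment, but the defect inequality applied to the witness pair $(A,B)\subset U_i\times U_i$ yields an increment of order $\e^5\cdot(|U_i|/|V|)^2$, not $\e^5$: every term $q(S,T)$ carries the weight $|S||T|/|V|^2$, and all the sets being refined live inside $U_i$. Since $|U_i|\geq \beta^i|V|$ at best, the increments decay like $\beta^{2i}$ and their sum converges, so the bound $\Phi\leq 1$ places no bound at all on the number of iterations. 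Renormalising the energy to the current set $U_i$ does not help either, since passing to a subset resets the partition and destroys monotonicity. This is exactly why ``irregular, hence refine and recurse'' does not produce a large \emph{self}-regular set, and why the paper does not iterate.

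Your fallback (b) also fails quantitatively. In Lemma~\ref{dukelefrod} the parts of the cylinder partition have relative size only $\beta(\gamma)=\gamma^{k^2\gamma^{-5}}$, which is vastly smaller than the regularity parameter $\gamma$. One can indeed find a cylinder $W_1\times W_2$ meeting the diagonal in a set $W=W_1\cap W_2$ with $|W|\geq\beta|U_i|$, but $\gamma$-regularity of the pair $(W_1,W_2)$ only controls subsets of size at least $\gamma|W_1|$, whereas $\e|W|$ can be as small as $\e\beta|W_1|\ll\gamma|W_1|$; the inequality $\gamma\leq\e\cdot\gamma^{k^2\gamma^{-5}}$ you would need fails for every $\gamma<1$. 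So cross-regularity of $(W_1,W_2)$ does not transfer to self-regularity of $W_1\cap W_2$. The paper's actual proof is a one-shot argument: apply Lemma~\ref{dukelefrod} once to obtain a large collection of disjoint equal-sized sets that are pairwise $\alpha$-regular with $\alpha=(\e/3)^2$, then apply Ramsey's theorem (colouring pairs by density ranges of width $\alpha$) to extract $s\geq 2\alpha^{-1}$ of them whose pairwise densities lie in a common interval of length $\alpha$; the union of these $s$ sets is $\e$-regular with itself, because the diagonal blocks contribute at most a $1/s\leq\alpha/2$ fraction of any large pair $S\times T$ and every off-diagonal block is regular with essentially the same density. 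The double exponential in $\delta$ comes from composing the exponentially small part-size in Lemma~\ref{dukelefrod} with the exponentially large collection needed for the Ramsey step, not from iterating a regularity lemma.
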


One way to prove this lemma is to first find a large collection $C$ of disjoint subsets of equal order which are pairwise $\alpha$-regular with $\alpha=(\epsilon/3)^2$. This can be done by an application of Szemer\'edi's regularity lemma and Tur\'an's theorem, but then the bounds are quite weak. Instead, one can  easily deduce this from Lemma \ref{dukelefrod}. A further application of Ramsey's theorem allows one to get a subcollection $C'$ of size $s\geq 2\alpha^{-1}$ such that the edge density between each pair of distinct subsets in $C'$ lies in an interval of length at most $\alpha$. The union of the sets in $C'$ is then an $\epsilon$-regular subset of the desired order. 

It is crucial in this lemma that $\delta^{-1}$ be of bounded tower height in $\epsilon^{-1}$. While our bound gives a double exponential dependence, we suspect that the truth is more likely to be a single exponential. We leave this as an open problem.

Repeated applications of Lemma \ref{oneepsilonregularsubset} allow us to pull out large, regular subsets until a small fraction of vertices remain. By distributing the remaining vertices amongst these subsets, we only slightly weaken their regularity, while giving a partition of any graph into large parts each of which is $\e$-regular with itself. This will be sufficient for our purposes.

\begin{lemma}\label{epsdeltaone}
For each $0<\epsilon<1/2$, let
$\delta=\delta(\epsilon)=2^{-\epsilon^{-(20/\epsilon)^{4}}}$. Every graph
$G=(V,E)$ has a vertex partition $V=V_1 \cup \ldots \cup V_k$ such that for
each $i$, $1 \leq i \leq k$, $|V_i| \geq \delta|V|$ and $V_i$ is an
$\epsilon$-regular set.
\end{lemma}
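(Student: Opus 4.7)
The plan is to iterate Lemma \ref{oneepsilonregularsubset} with a slightly stricter parameter $\e' < \e$, peeling off large $\e'$-regular subsets until only a small fraction of vertices remain, and then redistribute those leftover vertices among the existing parts. The gap between $\e'$ and $\e$ buys the slack needed to absorb the perturbation caused by the redistribution, while the bound stated for $\delta(\e)$ is much weaker than what a single application of Lemma \ref{oneepsilonregularsubset} gives for any $\e'$ close to $\e$, which easily accommodates the polynomial losses incurred.

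Concretely, I would fix $\e' = 2\e/3$ and $\eta = \e^3/100$. Starting from $U_0 = V$, at each step $i \geq 1$ with $|U_{i-1}| > \eta|V|$, apply Lemma \ref{oneepsilonregularsubset} to the induced subgraph $G[U_{i-1}]$ with parameter $\e'$ to extract an $\e'$-regular subset $V_i \subseteq U_{i-1}$ of size at least $\delta(\e')|U_{i-1}|$, where $\delta(\e') = 2^{-\e'^{-(10/\e')^4}}$, and set $U_i = U_{i-1} \setminus V_i$. The iteration terminates after some $k \leq \delta(\e')^{-1}\log(\eta^{-1})$ steps, at which point $|U_k| \leq \eta|V|$. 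Then distribute the vertices of $U_k$ proportionally among $V_1,\ldots,V_k$ to obtain a partition $V = V_1^* \cup \cdots \cup V_k^*$ with $|V_i^* \setminus V_i| \leq 2\eta|V_i|$ for every $i$.

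The core step is verifying that each $V_i^*$ is $\e$-regular. For arbitrary $S, T \subseteq V_i^*$ with $|S|, |T| \geq \e|V_i^*|$, set $S_0 = S \cap V_i$ and $T_0 = T \cap V_i$; since $\e - 2\eta \geq \e'$, one obtains $|S_0|, |T_0| \geq \e'|V_i|$, so $\e'$-regularity of $V_i$ yields $|d(S_0, T_0) - d(V_i, V_i)| \leq \e'$. Routine bounds on $|e(S,T) - e(S_0,T_0)|$ and $\bigl||S||T| - |S_0||T_0|\bigr|$, each of order $\eta|V_i|^2$, then control $|d(S, T) - d(S_0, T_0)|$ and $|d(V_i^*, V_i^*) - d(V_i, V_i)|$ by $O(\eta/\e^2)$. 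The choice $\eta = \e^3/100$ makes each error term at most $\e/6$, giving $|d(S,T) - d(V_i^*, V_i^*)| \leq \e' + 2 \cdot \e/6 = \e$ by the triangle inequality.

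For the size bound, each $|V_i^*| \geq |V_i| \geq \delta(\e')|U_{i-1}| \geq \delta(\e')\eta|V|$. The key numerical check is $\delta(\e')\eta \geq 2^{-\e^{-(20/\e)^4}}$, which reduces to comparing exponents $(15/\e)^4\log_2(3/(2\e))$ and $(20/\e)^4\log_2(1/\e)$; since $(15/20)^4 = 81/256$, the first is smaller by a factor bounded away from $1$, so $\delta(\e')$ dwarfs $\delta(\e)$ and the extra polynomial factor $\eta$ is absorbed easily. The main obstacle will be the regularity perturbation: because both the observed pair $(S, T)$ and the reference density $d(V_i^*, V_i^*)$ shift simultaneously, one must keep track of both error sources and choose $\eta$ small enough ($\eta \lesssim \e^3$) to ensure the combined deviation stays below $\e - \e'$.
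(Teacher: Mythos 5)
Your proposal is correct and follows exactly the route the paper intends: iterate Lemma \ref{oneepsilonregularsubset} with a slightly smaller regularity parameter to peel off large self-regular sets until only an $\eta$-fraction of vertices remains, then distribute the leftovers and absorb the resulting perturbation of densities into the gap $\e - \e'$ (the paper only sketches this in the two sentences preceding the lemma). The one quibble is that with your exact choices the perturbation $|d(S,T)-d(S_0,T_0)|$ comes out closer to $0.23\e$ than to $\e/6$, but the total $\e' + O(\eta/\e^2)$ is still comfortably below $\e$, and the doubly exponential slack in $\delta(\e)$ versus $\delta(\e')\eta$ is verified correctly.
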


We are now ready to prove the strong cylinder regularity lemma.
\vspace{3mm}

{\bf Proof of Lemma \ref{scr2}:}\hspace{1mm}
We may assume $|V| \geq S$, as otherwise we can let $P$ and $Q$ be the trivial
partitions into singletons, and it is easy to see the lemma holds.
We will define a sequence of partitions $P_1,P_2,\ldots$ of equitable partitions,
with $P_{j+1}$ a refinement of $P_j$ and $q(P_{j+1}) > q(P_j)+\epsilon/2$. Let
$P_1$ be an arbitrary equitable partition of $V$ consisting of $s_1=s$ parts. Suppose we
have already found an equitable partition $P_j:V=V_1 \cup \ldots \cup V_k$ with
$k \leq s_{j}$.

Let $\beta(x,\ell)=x^{\ell^2x^{-5}}$ as in Lemma \ref{dukelefrod} and
$\delta(x)=2^{-x^{-(20/x)^4}}$ as in Lemma \ref{epsdeltaone}. We apply Lemma
\ref{epsdeltaone} to each part $V_i$ of the partition $P_j$ to get a partition
of each part $V_i=V_{i1} \cup \ldots \cup V_{ih_i}$ of $P_i$ into parts each of
cardinality at least $\delta|V_i|$, where $\delta=\delta(\gamma)$ and
$\gamma=f(k) \cdot \beta$ with $\beta=\beta(f(k),k)$, such that each part $V_{ih}$ is
$\gamma$-regular. Note that $\delta^{-1}$ is at most triple-exponential in a
polynomial in $k/f(k)$. For each $k$-tuple $\ell=(\ell_1,\ldots,\ell_k) \in
[h_1] \times \cdots \times [h_k]$,  by Lemma \ref{dukelefrod} there is an
$f(k)$-regular partition $\mathcal{K}_{\ell}$ of the cylinder $V_{1\ell_1}
\times \cdots \times V_{k\ell_k}$ into at most $\beta^{-1}$ cylinders such
that, for each $K \in \mathcal{K}_{\ell}$, $|V_{i\ell_i}(K)| \geq \beta
|V_{i\ell_i}|$.   The union of the $\mathcal{K}_{\ell}$ forms a partition
$\mathcal{K}$  of $V_1 \times \cdots \times V_k$ which is strongly
$f(k)$-regular.

Recall that $Q=Q(\mathcal{K})$ is the partition of $V$ which is the common refinement of all
parts $V_i(K)$ with $i \in [k]$ and $K \in \mathcal{K}$. The number of parts of
$\mathcal{K}$ is at most $\delta^{-k}\beta^{-1}$ and hence the number of parts
of $Q$ is at most $k2^{1/(\delta^k \beta)}$. Thus, the number of parts of $Q$
is at most quadruple-exponential in a polynomial in $k/f(k)$. Let $P_{j+1}$ be
an equitable partition into $4\epsilon^{-1}|Q|$ parts with
$q(P_{j+1}) \geq q(Q)-\frac{\epsilon}{2}$, which exists by Lemma
\ref{makeequip}. Hence, there is an absolute constant $c$ such that
$$|P_{j+1}| \leq t_4\left((k/f(k))^c\right) \leq s_{j+1}.$$

If $q(Q) \leq q(P_j)+\epsilon$, then we may take $P=P_j$ and
$Q=Q(\mathcal{K})$, and these partitions satisfy the desired properties.
Otherwise, $q(P_{j+1}) \geq q(Q)-\frac{\epsilon}{2} >
q(P_j)+\frac{\epsilon}{2}$, and we continue the sequence of partitions. Since
$q(P_1) \geq 0$ and the mean-square density goes up by more than $\epsilon/2$
at each step and is always at most $1$, this process must stop within
$2/\epsilon$ steps, and we obtain the desired partitions.
\hfill$\Box$\medskip

Let $G=(V,E)$, $P:V=V_1 \cup \ldots \cup V_k$ be an equipartition and
$\mathcal{K}$ be a partition of the cylinder $V_1 \times \cdots \times V_k$
into cylinders. For $K=W_1 \times \cdots \times W_k \in \mathcal{K}$, define
the density $d(K)=\frac{|W_1| \times \cdots \times |W_k|}{|V_1| \times \cdots
\times |V_k|}$. The cylinder $K$ is {\it $\epsilon$-close} to $P$ if
$\left|d(W_i,W_j)-d(V_i,V_j)\right| \leq \epsilon$ for all but at most $\epsilon k^2$ pairs $1 \leq i \not = j \leq k$. The cylinder partition $\mathcal{K}$ is {\it $\epsilon$-close} to $P$ if  $\sum d(K) \leq \epsilon$, where the sum is over all $K \in \mathcal{K}$ that are not
$\epsilon$-close to $P$. As with the definition of closeness used in the strong regularity lemma, this definition is closely related to the condition that $q(Q) \leq q(P) + \e$, where here $Q = Q(\mathcal{K})$. 

The connection we shall need to prove Lemma \ref{strongeasycor} is contained in the following statement.

\begin{lemma}\label{cylinderclose}
Let $G=(V,E)$ and $P:V=V_1 \cup \ldots \cup V_k$ be an equipartition with $k \geq 2\epsilon^{-1}$ and $|V| \geq 4k\epsilon^{-1}$. Let $\mathcal{K}$ be a partition of the
cylinder $V_1 \times \cdots \times V_k$ into cylinders. If $Q=Q(\mathcal{K})$
satisfies $q(Q) \leq q(P)+\epsilon$, then $\mathcal{K}$ is $(2\epsilon)^{1/4}$-close to $P$.
\end{lemma}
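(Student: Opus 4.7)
The plan is to convert the small mean-square increment $q(Q)-q(P) \leq \epsilon$ into an averaged second-moment bound for the density deviations $(d(W_a(K),W_b(K)) - d(V_a,V_b))^2$ over cylinders $K \in \mathcal{K}$ and index pairs $a \neq b$, and then conclude by Markov's inequality. Writing $Q_a$ for the parts of $Q$ that lie inside $V_a$, the starting point is the Cauchy--Schwarz defect identity (the same one implicit in the proof of Lemma~\ref{keyregclaim})
\[
q(Q_a,Q_b) - q(V_a,V_b) \;=\; \sum_{U \in Q_a,\, W \in Q_b} \frac{|U||W|}{|V|^2}\bigl(d(U,W) - d(V_a,V_b)\bigr)^2.
\]
Summing this over all ordered pairs $(a,b)$ yields a total bounded by $q(Q)-q(P) \leq \epsilon$.

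The main step is a transfer lemma: for each ordered pair $(a,b)$,
\[
\sum_{K \in \mathcal{K}} d(K)\bigl(d(W_a(K),W_b(K)) - d(V_a,V_b)\bigr)^2 \;\leq\; k^2\bigl(q(Q_a,Q_b)-q(V_a,V_b)\bigr).
\]
To prove this I would first apply Jensen's inequality (convexity of $x^2$) to bound each summand by an average of $(d(U,W)-d(V_a,V_b))^2$ over pairs $U \subset W_a(K),\,W \subset W_b(K)$ of parts of $Q$, and then swap the order of summation. The total weight with which a pair $(U,W)$ with $U\in Q_a,\,W\in Q_b$ appears is
\[
\sum_{K:\,U\subset W_a(K),\,W\subset W_b(K)} d(K)\,\frac{|U||W|}{|W_a(K)||W_b(K)|} \;=\; \frac{|U||W|}{|V_a||V_b|},
\]
which follows because, after fixing any $u \in U$ and $w \in W$, the cylinders containing $(u,w)$ in their $(a,b)$-projection partition $\{u\}\times\prod_{c\neq a,b}V_c\times\{w\}$, and because each $W_a(K)$ is a union of $Q_a$-parts (so $U \cap W_a(K) \neq \emptyset$ forces $U \subset W_a(K)$). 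Using $|V_a||V_b|=|V|^2/k^2$ then gives the claimed factor of $k^2$.

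Summing the transfer estimate over ordered pairs $a\neq b$ yields
\[
\sum_{K \in \mathcal{K}} d(K)\sum_{a\neq b}\bigl(d(W_a(K),W_b(K))-d(V_a,V_b)\bigr)^2 \;\leq\; k^2\epsilon.
\]
Set $\alpha=(2\epsilon)^{1/4}$. If a cylinder $K$ fails to be $\alpha$-close to $P$, then at least $\alpha k^2$ ordered pairs $(a,b)$ with $a\neq b$ satisfy $|d(W_a(K),W_b(K))-d(V_a,V_b)|>\alpha$, so the inner sum for this $K$ exceeds $\alpha^3 k^2$. Markov's inequality therefore gives
\[
\sum_{K \text{ not }\alpha\text{-close}} d(K) \;\leq\; \frac{\epsilon}{\alpha^3} \;=\; \frac{\alpha}{2} \;\leq\; \alpha,
\]
which is exactly the statement that $\mathcal{K}$ is $\alpha$-close to $P$. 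The main obstacle is the transfer lemma itself: lifting an estimate that lives on pairs of $Q$-parts to one that lives on cylinders of $\mathcal{K}$, whose structure is genuinely different because $\mathcal{K}$ need not be a product refinement of $Q$. The hypotheses $k\geq 2\epsilon^{-1}$ and $|V|\geq 4k\epsilon^{-1}$ play only an accounting role, ensuring that the $k$ diagonal pairs $a=b$ are a negligible fraction of the $\alpha k^2$ bad-pair threshold and that $P$ may be treated as having parts of size exactly $|V|/k$ up to harmless rounding.
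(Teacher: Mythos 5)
Your proof is correct and rests on the same two ingredients as the paper's own argument: the averaging identity coming from the fact that, for fixed $(a,b)$ and fixed $(u,w)\in U\times W$, the cylinders through $(u,w)$ partition the remaining coordinates (this is exactly your weight computation $\sum_K d(K)\,|U||W|/(|W_a(K)||W_b(K)|)=|U||W|/(|V_a||V_b|)$, and in the paper it appears as $d(V_i,V_j)=\sum_K d(V_i(K),V_j(K))d(K)$), together with a Jensen/Cauchy--Schwarz defect step converting $q(Q)-q(P)\le\epsilon$ into a bound on the cylinder-level second moment, followed by Markov. The only organizational difference is that the paper routes the argument through the auxiliary quantities $q'(P)\le q(\mathcal{K})\le q'(Q)$ and absorbs the diagonal pairs via $|q'-q|\le 1/k$ error terms (which is where the hypothesis $k\ge 2\epsilon^{-1}$ is used), whereas you simply discard the nonnegative diagonal summands of $q(Q)-q(P)$, so your version never actually needs that hypothesis -- your closing remark attributing it to diagonal accounting describes the paper's bookkeeping rather than your own.
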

\begin{proof}
It will be helpful to assume that all parts of the equipartition $P$ have equal size - this affects the calculations only slightly. It will also be helpful to introduce a slight variant of the mean-square density as follows. Let $q'(P)=\sum_{i<j}d^2(V_i,V_j)p_{ij}$, where $p_{ij}=|V_i||V_j|/\sum_{i<j} |V_i||V_j|$. 
Thus, $q'(P)$ is the mean of the square densities between the pairs of distinct parts. It is easy to check that $q'(P)$ is close to $q(P)$. Indeed, we have $q'(P)-q(P)=
\frac{1}{k}\left(q'(P)-\bar{q}\right)$, where $\bar{q}=\sum_{i=1}^k d^2(V_i)/k$ is the average of the square densities inside the parts. Hence, $|q'(P)-q(P)| \leq  \frac{1}{k}$. 
We similarly have $|q'(Q)-q(Q)| \leq \frac{1}{k}$. Let $$q(\mathcal{K})={k \choose 2}^{-1}\sum_{i<j}\sum_{K \in \mathcal{K}}d^2(V_i(K),V_j(K))d(K).$$ We have the following equalities 
\begin{eqnarray*} q(\mathcal{K})-q'(P) & = & {k \choose 2}^{-1}\sum_{i<j}\sum_{K \in \mathcal{K}}\left(d^2(V_i(K),V_j(K))-d^2(V_i,V_j)\right)d(K) \\  & = &
{k \choose 2}^{-1}\sum_{i<j}\sum_{K \in \mathcal{K}}\left(d(V_i(K),V_j(K))-d(V_i,V_j)\right)^2 d(K),\end{eqnarray*}
where the last equality uses the identity $d(V_i,V_j)=\sum_{K \in \mathcal{K}}d(V_i(K),V_j(K))d(K)$. 
This equality shows that $q(\mathcal{K}) \geq q'(P)$ as it expresses their difference as a sum of nonnegative terms. Furthermore, it shows that if $\mathcal{K}$ is not $\beta$-close to $P$, 
then $q(\mathcal{K}) \geq q(P)+{k \choose 2}^{-1} \cdot \frac{\beta k^2}{2} \cdot  \beta^2 \cdot \beta \geq q(P)+\beta^4$. In particular, if $q(\mathcal{K}) \leq q'(P)+2\epsilon$, then $\mathcal{K}$ is $(2\epsilon)^{1/4}$-close to $P$. So assume for contradiction that $q(\mathcal{K}) > q'(P)+2\epsilon$. 

A similar equality implies $q'(Q) \geq q(\mathcal{K})$.  We therefore have 
\begin{eqnarray*} q(Q)-q(P) & = & \left(q(Q)-q'(Q)\right)+\left(q'(Q)-q(\mathcal{K})\right)+\left(q(\mathcal{K})-q'(P)\right)+\left(q'(P)-q(P)\right)
\\ & \geq & -\frac{1}{k} + 0 + \left(q(\mathcal{K})-q'(P)\right) - \frac{1}{k} 
\\ & > & \epsilon,   
\end{eqnarray*} 
contradicting the assumption of Lemma \ref{cylinderclose} and completing the proof.
\end{proof} 

With this in hand, we can readily deduce a tower-type bound for Lemma \ref{strongeasycor}.

\begin{lemma} \label{easycor2}
For each $0 < \epsilon < 1/3$ and decreasing function $f:\mathbb{N}\rightarrow
(0,\epsilon]$, there is $\delta'=\delta'(\epsilon,f)$ such that every graph $G=(V,E)$ with $|V| \geq \delta'^{-1}$ 
has an equitable partition $V=V_1 \cup \ldots \cup V_k$ and vertex subsets $W_i
\subset V_i$ such that $|W_i| \geq \delta' |V|$, each pair $(W_i,W_j)$ with
$1 \leq i \leq j \leq k$ is $f(k)$-regular and all but at most $\epsilon k^2$
pairs $1 \leq i \leq j \leq k$ satisfy $|d(V_i,V_j)-d(W_i,W_j)| \leq \epsilon$.  Furthermore, we may take $\delta'=\frac{1}{8S^2}$, where $S=S(\frac{\epsilon^4}{2},s,f)$ is defined as in Lemma \ref{scr2} and $s=2\epsilon^{-1}$.
\end{lemma}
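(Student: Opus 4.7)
The plan is to apply Lemma~\ref{scr2} to extract a strongly regular cylinder partition whose mean-square approximation to $P$ translates, via Lemma~\ref{cylinderclose}, into pointwise cylinder closeness, and then to read off the $W_i$ from the projections of a single good cylinder.

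Concretely, I would apply Lemma~\ref{scr2} with regularity parameter $\epsilon^4/2$ (in the role of $\epsilon$ there), starting partition size $s$ large enough that $k \geq 2(\epsilon^4/2)^{-1}$ holds, and the given function $f$. This yields an equitable partition $P: V = V_1 \cup \cdots \cup V_k$ with $s \leq k \leq S$ and a strongly $f(k)$-regular cylinder partition $\mathcal{K}$ of $V_1 \times \cdots \times V_k$ satisfying $q(Q(\mathcal{K})) \leq q(P) + \epsilon^4/2$. Since $|V| \geq \delta'^{-1} = 8S^2$, the size hypothesis of Lemma~\ref{cylinderclose} also holds, and that lemma applied with parameter $\epsilon^4/2$ gives that $\mathcal{K}$ is $(2\cdot\epsilon^4/2)^{1/4} = \epsilon$-close to $P$; that is, the total $d(K)$-weight of cylinders not individually $\epsilon$-close to $P$ is at most $\epsilon$.

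Next, I would isolate a single cylinder $K^* \in \mathcal{K}$ by a weight bound. The $d(K)$-weight of cylinders that fail to be strongly $f(k)$-regular is at most $f(k) \leq \epsilon$, and by the previous step the weight of cylinders not $\epsilon$-close to $P$ is at most $\epsilon$. Since $2\epsilon < 1$, some $K^* \in \mathcal{K}$ enjoys both properties; setting $W_i = V_i(K^*)$, strong regularity delivers the required $f(k)$-regularity of every pair $(W_i, W_j)$ with $1 \leq i, j \leq k$ (including the diagonal $i = j$). Closeness of $K^*$ leaves at most $\epsilon k^2$ bad off-diagonal pairs; absorbing the at most $k$ diagonal pairs---possible since $k \geq s \geq 2/\epsilon$, after adjusting constants by a factor of $2$ in the previous application of Lemma~\ref{cylinderclose}---keeps the number of bad pairs among $1 \leq i \leq j \leq k$ below $\epsilon k^2$.

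The main obstacle is the size lower bound $|W_i| \geq \delta'|V|$, which I would establish not probabilistically but by tracing the construction in the proof of Lemma~\ref{scr2}. Every cylinder $K \in \mathcal{K}$ there arises from Lemma~\ref{dukelefrod} applied inside a product $V_{1\ell_1} \times \cdots \times V_{k\ell_k}$ of $\gamma$-regular subsets produced by Lemma~\ref{epsdeltaone}, and hence satisfies $|V_i(K)| \geq \beta\,\delta_0\,|V_i|$, where $\beta = \beta(f(k), k)$ and $\delta_0 = \delta(f(k)\beta)$ depend only on $k$ and $f(k)$. A tower-height comparison shows that $\beta^{-1}\delta_0^{-1}$ sits at bounded tower height in $k/f(k)$, while the bound $s_{j^*+1} = t_4((k/f(k))^c) \leq S$ obtained from one further iteration exceeds it by a full exponential layer. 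Hence $\beta\delta_0 \geq 1/S$, giving $|W_i| \geq \beta\delta_0|V|/k \geq |V|/(kS) \geq |V|/S^2 \geq |V|/(8S^2) = \delta'|V|$, which closes the argument.
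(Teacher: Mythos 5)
Your proof is correct and follows the same overall strategy as the paper's: apply Lemma~\ref{scr2} with parameter $\epsilon^4/2$, convert the mean-square condition $q(Q)\leq q(P)+\epsilon^4/2$ into $\epsilon$-closeness of $\mathcal{K}$ to $P$ via Lemma~\ref{cylinderclose}, and then extract a single cylinder $K^*$ that is simultaneously strongly $f(k)$-regular and $\epsilon$-close to $P$ by a union bound on the weights $d(K)$, setting $W_i=V_i(K^*)$. The one genuine divergence is how you obtain $|W_i|\geq \delta'|V|$. The paper keeps Lemma~\ref{scr2} as a black box and simply adds a third bad event to the union bound: since $Q(\mathcal{K})$ has at most $S$ parts, the total weight of cylinders having some side smaller than $\frac{1}{4S}|V_i|$ is at most $\frac{1}{4}$, and $1-f(k)-\epsilon-\frac{1}{4}>0$ still leaves a good cylinder. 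You instead open up the proof of Lemma~\ref{scr2} and note that every cylinder it constructs satisfies $|V_i(K)|\geq \beta\delta_0|V_i|$ with $\beta^{-1}\delta_0^{-1}$ of strictly smaller tower height than $t_4\left((k/f(k))^c\right)\leq s_{j^*+1}\leq S$; this is valid (the $\beta$ and $\delta_0$ there depend only on $k$ and $f(k)$, and the monotonicity of $f$ and of the $s_i$ gives the comparison), but it relies on structural information that the statement of Lemma~\ref{scr2} does not record, so your argument is less modular than the paper's and would break if Lemma~\ref{scr2} were proved differently. A further small point: you correctly observe that Lemma~\ref{cylinderclose} applied with parameter $\epsilon^4/2$ requires $k\geq 4\epsilon^{-4}$, which the paper's choice $s=2\epsilon^{-1}$ does not by itself guarantee, and you enlarge $s$ accordingly; this alters the constant in the ``Furthermore'' clause only negligibly, but it does mean you are proving the lemma with a slightly different value of $\delta'$ than the one stated.
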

\begin{proof} Let $\alpha=\frac{\epsilon^4}{2}$, $s=2\epsilon^{-1}$, and $\delta'=\frac{1}{8S^2}$,
where $S=S(\alpha,s,f)$ is as in Lemma \ref{scr2}. We apply Lemma \ref{scr2} with
$\alpha$ in place of $\epsilon$. We get an equipartition $P:V=V_1 \cup \ldots
\cup V_k$ with $s \leq k \leq S$ and a strongly $f(k)$-regular partition $\mathcal{K}$ of $V_1 \times
\cdots \times V_k$ into cylinders such that the refinement $Q=Q(\mathcal{K})$
of $P$ has at most $S=S(\alpha, s, f)$ parts and satisfies $q(Q) \leq q(P)+\alpha$. 
Since $|V| \geq \delta'^{-1}=8S^2$, and $P$ is an equipartition into $k \leq S$ parts, the cardinality of each part $V_i \in P$ satisfies $|V_i| \geq \frac{|V|}{2S}$. By Lemma \ref{cylinderclose},  as $(2 \alpha)^{1/4}=\epsilon$, the cylinder
partition $\mathcal{K}$ is $\epsilon$-close to $P$. Hence, at most an
$\epsilon$-fraction of the $k$-tuples $(v_1,\ldots,v_k) \in V_1 \times \cdots
\times V_k$ belong to parts $K=W_1 \times \cdots \times W_k$ of $\mathcal{K}$
that are not $\epsilon$-close to $P$. Since $Q(\mathcal{K})$ has at most $S$
parts, the fraction of $k$-tuples $(v_1,\ldots,v_k) \in V_1 \times \cdots
\times V_k$ that belong to parts $K=W_1 \times \cdots \times W_k$ of
$\mathcal{K}$ with $|W_i|<\frac{1}{4S} |V_i|$ for at least one $i \in [k]$ is at
most $\frac{1}{4S}  \cdot S = \frac{1}{4}$. Therefore, at least a fraction $1-f(k)-\epsilon-\frac{1}{4}>0$
of the $k$-tuples  $(v_1,\ldots,v_k) \in V_1 \times \cdots \times V_k$ belong
to parts $K=W_1 \times \cdots \times W_k$ of $\mathcal{K}$ satisfying $K$ is
strongly $f(k)$-regular,  $|W_i| \geq \frac{1}{4S} |V_i| \geq \delta'|V|$ for $i \in [k]$ and $K$
is $\epsilon$-close to $P$.  Since a positive fraction of the $k$-tuples belong
to such $K$, there is at least one such $K$. This $K$ has the desired
properties. Indeed, the number of pairs $1 \leq i \not = j \leq k$ for which $|d(W_i,W_j)-d(V_i,V_j)| >
\epsilon$ is at most $\epsilon k^2$ and hence the number of pairs $1 \leq i \leq j \leq k$ for 
which $|d(W_i,W_j)-d(V_i,V_j)| > \epsilon$ is at most $\epsilon k^2/2+k \leq \epsilon k^2$. This completes the proof. 
\end{proof}

By using the induced counting lemma, Lemma \ref{lem:AFKScount}, we may now conclude the proof as in Section \ref{sec:usualinduced} to obtain the 
following quantitative version of Theorem \ref{thm:inducedremoval}.

\begin{theorem}
There exists a constant $c$ such that, for any graph $H$ on $h$ vertices and $0 < \epsilon< 1/2$, if $\delta^{-1} =
t_j(h)$, where $j= c \epsilon^{-4}$, then any graph $G$ on $n$ vertices with
at most $\delta n^h$ induced copies of $H$ may be made induced $H$-free by adding and/or deleting at most $\epsilon n^2$ edges.
\end{theorem}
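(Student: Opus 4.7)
The plan is to run the argument of Section~\ref{sec:usualinduced} essentially verbatim, but substituting Lemma~\ref{easycor2} for Lemma~\ref{strongeasycor}; the quantitative improvement will then follow mechanically from the tower-type bound on $\delta'$ that Lemma~\ref{easycor2} provides.

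First, I would set $\epsilon' := \epsilon/8$ and apply Lemma~\ref{easycor2} with parameter $\epsilon'$ and with the decreasing function $f(k) := \epsilon'^h/(4h)$, tuned so that the hypothesis of the induced counting lemma, Lemma~\ref{lem:AFKScount}, is met with deviation parameter $\eta = \epsilon'$. This yields an equipartition $V = V_1 \cup \cdots \cup V_k$ together with subsets $W_i \subset V_i$ of size at least $\delta' n$, such that every pair $(W_i, W_j)$ is $f(k)$-regular and all but at most $\epsilon' k^2$ pairs $(i,j)$ satisfy $|d(V_i, V_j) - d(W_i, W_j)| \leq \epsilon'$.

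Next, I would split into two cases, exactly as in Section~\ref{sec:usualinduced}. If there exists a map $\phi : V(H) \to [k]$ such that $d(W_{\phi(u)}, W_{\phi(v)}) > \epsilon'$ for every edge $\{u,v\}$ of $H$ and $d(W_{\phi(u)}, W_{\phi(v)}) < 1 - \epsilon'$ for every non-edge, then Lemma~\ref{lem:AFKScount} delivers at least $(\epsilon'/4)^{\binom{h}{2}} \prod_i |W_{\phi(i)}| \geq \delta n^h$ ordered induced copies of $H$, contradicting the hypothesis provided we set $\delta := \tfrac{1}{h!}(\epsilon'/4)^{\binom{h}{2}} (\delta')^h$. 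Otherwise, no such $\phi$ exists, and I would modify $G$ by deleting all edges between $V_i$ and $V_j$ whenever $d(W_i, W_j) \leq \epsilon'$ and adding all non-edges whenever $d(W_i, W_j) \geq 1 - \epsilon'$; any surviving induced copy of $H$ would then witness a forbidden $\phi$. The edit count is at most $4 \epsilon' n^2$ from the density slack on the good pairs plus $\epsilon' n^2$ from the at most $\epsilon' k^2$ exceptional pairs, which sits comfortably below $\epsilon n^2$.

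The main place where effort is required, and the sole source of the $\epsilon^{-4}$ in the tower height, is the bookkeeping. Unwinding Lemma~\ref{easycor2} gives $\delta'^{-1} = 8 S^2$ with $S = S(\epsilon'^4/2, s, f)$ from Lemma~\ref{scr2}, whose tower height is $\ell = 2(\epsilon'^4/2)^{-1} + 1 = O(\epsilon^{-4})$, and whose recursion $s_{i+1} = t_4\bigl((s_i/f(s_i))^c\bigr)$ absorbs the factor $f(s_i)^{-1}$, which is polynomial in $h/\epsilon$ and in $s_i$, into a bounded number of extra tower levels per step. This yields $S \leq t_{O(\epsilon^{-4})}(h)$, and propagating through $\delta^{-1} = h! (4/\epsilon')^{\binom{h}{2}}(\delta')^{-h}$ gives $\delta^{-1} \leq t_{c \epsilon^{-4}}(h)$ for a suitable absolute constant $c$. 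There is no conceptual obstacle beyond this bookkeeping, but one should verify carefully that the polynomial-in-$h/\epsilon$ and polynomial-in-$s_i$ factors introduced at each level of the cylinder recursion never outrun the slack afforded by going one additional tower level up.
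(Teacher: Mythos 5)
Your proposal is correct and is essentially the paper's own proof: the authors likewise conclude by feeding Lemma~\ref{easycor2} (their tower-type replacement for Lemma~\ref{strongeasycor}) into the argument of Section~\ref{sec:usualinduced} with the same choice of $f$, the same counting-lemma dichotomy, the same $5\epsilon' n^2$ edit bound with $\epsilon' = \epsilon/8$, and the same bookkeeping through the recursion of Lemma~\ref{scr2} to get tower height $O(\epsilon^{-4})$. The only nitpick is that your attribution of the $4\epsilon' n^2$ and $\epsilon' n^2$ contributions to the good and exceptional pairs is swapped, which does not affect the total.
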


\subsection{Infinite removal lemma} \label{sec:infinite}

In order to characterize the natural graph properties which are testable, the induced removal lemma was extended by Alon and Shapira \cite{AlSh08} to the following infinite version. For a family $\mathcal{H}$ of graphs, a graph $G$ is induced $\mathcal{H}$-free if $G$ does not contain any graph $H$ in $\mathcal{H}$.
 
\begin{theorem}\label{infiniteremoval} For every (possibly infinite) family of graphs $\mathcal{H}$ and $\epsilon>0$, there are $n_0$, $h_0$, and $\delta$ such that the following holds. If a graph $G=(V,E)$ on $n \geq n_0$ vertices has at most $\delta n^{h}$ induced copies of each graph $H \in \mathcal{H}$ on $h \leq h_0$ vertices, then $G$ can be made induced $\mathcal{H}$-free by adding and/or deleting at most $\epsilon n^2$ edges.
\end{theorem}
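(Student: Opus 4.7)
The plan is to reduce the (possibly infinite) family $\mathcal{H}$ to a finite sub-family by showing that, for a suitable threshold $h_0 = h_0(\mathcal{H}, \epsilon)$, it suffices to control the induced counts of members of $\mathcal{H}$ on at most $h_0$ vertices. The mechanism is the following: after applying Lemma~\ref{strongeasycor} to $G$, any induced copy of any member of $\mathcal{H}$ in a ``cleaned'' version of $G$ must be witnessed by a compatibility relation with a reduced structure on at most $k(\epsilon)$ vertices, and since there are only finitely many such reduced structures, we may replace an arbitrary witness by a minimum one drawn from $\mathcal{H}$.

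Concretely, apply Lemma~\ref{strongeasycor} with parameters $\epsilon' \ll \epsilon$ and a decreasing function $f$ (to be chosen below), obtaining an equitable partition $V_1 \cup \cdots \cup V_k$ with $k \leq k_0(\epsilon)$ and subsets $W_i \subset V_i$ of size at least $\delta' n$, where every pair $(W_i,W_j)$ with $1 \leq i \leq j \leq k$ is $f(k)$-regular and $|d(V_i,V_j)-d(W_i,W_j)| \leq \epsilon'$ holds for all but $\epsilon' k^2$ pairs. Let $R$ be the three-valued ``reduced structure'' on $[k]$ that labels each unordered pair (and each loop $(i,i)$) as \emph{edge} if the corresponding $W$-density exceeds $1-\epsilon/2$, as \emph{non-edge} if it lies below $\epsilon/2$, and as \emph{flexible} otherwise. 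Say that a graph $H$ is \emph{compatible} with $R$ if there is a map $\phi : V(H) \to [k]$ sending each edge of $H$ to an edge- or flexible-labelled slot and each non-edge of $H$ to a non-edge- or flexible-labelled slot (including the loop case, when two vertices of $H$ are sent to the same part). For each labelled structure $R$ on at most $k_0$ vertices, let $h(R)$ denote the minimum $v(H)$ over $H \in \mathcal{H}$ compatible with $R$, or $0$ if no such $H$ exists, and put $h_0 = \max_R h(R)$; this is finite because $R$ ranges over a finite set. Choose $f$ in advance so that $f(k) \leq (\epsilon/2)^{h(k)}/(4h(k))$, where $h(k) := \max\{h(R) : |V(R)| \leq k\}$, and choose $\delta$ smaller than the counting-lemma constant $\delta_0 = \delta_0(\epsilon, h_0, \delta')$ derived below.

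Now dichotomize on the reduced structure $R$ produced. If \emph{some} $H \in \mathcal{H}$ is compatible with $R$, then by construction of $h_0$ we may take $v(H) \leq h_0$, and the induced counting lemma (a direct variant of Lemma~\ref{lem:AFKScount}, using that $(W_i,W_i)$ is itself $f(k)$-regular when $\phi$ sends several vertices of $H$ to the same part) produces at least $\delta_0 n^{v(H)}$ induced copies of $H$ in $G$, contradicting the hypothesis. Otherwise, \emph{no} $H \in \mathcal{H}$ is compatible with $R$, and we form $G'$ from $G$ by deleting every edge between $V_i$ and $V_j$ whenever $d(W_i,W_j) < \epsilon/2$ and adding every missing edge whenever $d(W_i,W_j) > 1-\epsilon/2$, with the analogous operation inside each $V_i$. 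Using that $|d(V_i,V_j)-d(W_i,W_j)| \leq \epsilon'$ for all but $\epsilon' k^2$ pairs, and requiring $k \geq 2\epsilon^{-1}$ to keep the within-part contribution small, the total number of edge modifications is at most $\epsilon n^2$. Finally, $G'$ is induced $\mathcal{H}$-free: any induced copy of any $H \in \mathcal{H}$ in $G'$ would, by reading off its part assignment, supply a map $\phi$ witnessing compatibility of $H$ with $R$, contradicting the current case.

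The main obstacle, apart from parameter bookkeeping, is the interlocking choice of constants: the counting lemma demands that $f(k)$ be small in terms of $h_0$, while $h_0$ itself is defined via the reduced structures on at most $k$ vertices. This circularity is resolved by feeding the monotone envelope $h(k)$ into the definition of $f$ before running Lemma~\ref{strongeasycor}, so that $f$ is a valid decreasing function of $k$. With this in place, the quantitative dependence of $\delta^{-1}$ is governed by the tower-type bound of Lemma~\ref{easycor2} composed with the growth of $h(k)$, yielding a tower-type bound on $\delta^{-1}$ whenever $h(k)$ grows reasonably with $k$, consistent with the authors' remark preceding the statement.
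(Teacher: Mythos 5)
Your proposal is correct and follows essentially the same route as the paper: your compatibility relation is the paper's $H \rightarrow_c R$, your $h(k)$ is the paper's $\Psi_{\mathcal{H}}(k)$, and the resolution of the circularity by feeding the monotone envelope $h(k)$ into $f$ before invoking Lemma~\ref{strongeasycor} is exactly the argument given there. The remaining differences (thresholds $\epsilon/2$ versus $\epsilon$, the explicit requirement $k \geq 2\epsilon^{-1}$) are cosmetic.
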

\begin{proof} The proof is a natural extension of the proof of the induced removal lemma and similarly uses the key corollary, Lemma \ref{strongeasycor}, of the strong regularity lemma. The main new idea is to pick an appropriate function $f$ to apply Lemma \ref{strongeasycor}. The choice of the function $f$ will depend heavily on the family $\mathcal{H}$. 

For a graph $H$ and an edge-coloring $c$ of the edges of the complete graph with loops $R$ on $[k]$ with colors white, black and grey, we write $H \rightarrow_c R$ if there is a mapping $\phi:V(H) \rightarrow [k]$ such that for each edge $(u,v)$ of $H$ we have that $c(\phi(u),\phi(v))$ is black or grey and for each pair $(u,v)$ of distinct vertices of $H$ which do not form an edge we have that $c(\phi(u),\phi(v))$ is white or grey. We write $H \not \rightarrow_c R$ if $H \rightarrow_c R$  does not hold. 

Let $P:V=V_1 \cup \ldots \cup V_k$ be a vertex partition of $G$. A key observation is that if we {\it round} $G$ by the partition $P$ and the coloring $c$ to obtain a graph $G'$ on the same vertex set as $G$ by adding edges to make $(V_i,V_j)$ complete if $(i,j)$ is black, deleting edges to make $(V_i,V_j)$ empty if $(i,j)$ is white and we have that $H \not \rightarrow_c R$, then $G'$ does not contain $H$ as an induced subgraph.  

For any (possibly infinite) family of graphs $\mathcal{H}$ and any integer $r$, let $\mathcal{H}_r$ be the following set of colored complete graphs with loops: a colored complete graph with loops $R$ belongs to $\mathcal{H}_r$ if and only if it has at most $r$ vertices and there is at least one $H \in \mathcal{H}$ such that $H \rightarrow_c R$. For any family $\mathcal{H}$ of graphs and integer $r$ for which $\mathcal{H}_r \not = \emptyset$, let $$\Psi_{\mathcal{H}}(r)=\max_{R \in \mathcal{H}_r} \, \min_{H \in \mathcal{H}:H \rightarrow_c R} |V(H)|.$$
If $\mathcal{H}_r =\emptyset$, define $\Psi_{\mathcal{H}}(r)=1$. Note that $\Psi_{\mathcal{H}}(r)$ is a monotonically increasing function of $r$. 
Let $$f(r)=\frac{\epsilon^{\Psi_{\mathcal{H}}(r)}}{4\Psi_{\mathcal{H}}(r)}.$$ 
Note that the function $f$ only depends on $\epsilon$ and $\mathcal{H}$. 

Let $\delta'=\delta'(\epsilon,f)$ be as in Lemma \ref{strongeasycor}, which only depends on $\epsilon$ and $\mathcal{H}$. Also let $k_0=2\delta'^{-1}$, $h_0=\Psi_{\mathcal{H}}(k_0)$, $n_0=1/\left(\delta' f(k_0)\right)$ and $\delta=\frac{1}{h_0!}(\epsilon/4)^{h_0^2}\delta'^{\, h_0}$. We have that $k_0$, $h_0$, $n_0$ and $\delta>0$ only depend on $\epsilon$ and $\mathcal{H}$. By assumption, $G$ has $n \geq n_0$ vertices. 

We apply Lemma \ref{strongeasycor} to $G$. We get an equitable vertex partition $P:V=V_1 \cup \ldots \cup V_k$ of $G$ and subsets $W_i \subset V_i$ with $|W_i| \geq \delta'|V|$ such that, for $1 \leq i \leq j \leq k$, the pair $(W_i,W_j)$ is $f(k)$-regular and all but at most $\epsilon k^2$ pairs $1 \leq i \leq j \leq k$ satisfy $|d(V_i,V_j)-d(W_i,W_j)| \leq \epsilon$. As $\delta'|V| \leq |W_i| \leq |V_i| \leq 2n/k$, we have $k \leq 2\delta'^{-1}  \leq k_0$.

Consider the coloring $c$ of the complete graph with loops $R$ on $[k]$ where a pair $(i,j)$ of vertices is black if $d(W_i,W_j) \geq 1-\epsilon$, white if $d(W_i,W_j)\leq \epsilon$ and grey if $\epsilon<d(W_i,W_j)<1-\epsilon$. Suppose, for the sake of contradiction, that there is a graph $H$ with $H \rightarrow_c R$. From the definition of $\Psi$, there is a graph $H$ on $h \leq \Psi_{\mathcal{H}}(k)$ vertices with $H \rightarrow_c  R$.  As $k \leq k_0$, the number of vertices of $H$ satisfies $h \leq h_0$.  As each pair $(W_i,W_j)$ is $f(k)$-regular and $|W_i| \geq \delta'|V| \geq f(k)^{-1}$, applying the induced counting lemma, Lemma \ref{lem:AFKScount}, with $\gamma=f(k)$, we get at least $$\frac{1}{h!}\left(\frac{\epsilon}{4}\right)^{h \choose 2}(\delta'|V|)^h \geq \delta n^h$$ induced copies of $H$ in $G$, contradicting the supposition of the theorem. Thus, there is no graph $H$ with $H \rightarrow_c R$. 

We round the graph $G$ by the partition $P$ and the coloring $c$ as described earlier in the proof to obtain a graph $G'$. By the key observation, for each graph $H$ with $H \not \rightarrow_c R$, the graph $G'$ does not contain $H$ as an induced subgraph. Hence, $G'$ is induced $\mathcal{H}$-free.  

Moreover, not many edges were changed from $G$ to obtain $G'$. Indeed, as there are at most $\epsilon k^2$ pairs $1 \leq i \leq j \leq k$ which satisfy $|d(V_i,V_j)-d(W_i,W_j)| > \epsilon$,  the number of edge modifications made between such pairs is at most $\epsilon k^2 \cdot (2n/k)^2=4\epsilon n^2$. Between the other pairs we have made at most $2\epsilon {n \choose 2} \leq \epsilon n^2$ edge modifications. In total, at most $5\epsilon n^2$ edge modifications were made to obtain $G'$ from $G$. Replacing $\epsilon$ by $\epsilon/5$ in the above argument completes the proof.   
\end{proof}

\section{Arithmetic removal} \label{sec:arithmeticremoval}

The notion of arithmetic removal was introduced by Green \cite{G05}. By establishing an appropriate variant of the regularity lemma in the context of abelian groups, he proved the following result. 

\begin{theorem} \label{thm:greenremoval}
For any natural number $k \geq 3$ and any $\e > 0$, there exists $\d > 0$ such that if $G$ is an abelian group of order $n$ and $A_1, \dots, A_k$ are subsets of $G$ such that there are at most $\d n^{k-1}$ solutions to the equation $a_1 +  a_2 + \dots +  a_k = 0$ with $a_i \in A_i$ for all $i$ then it is possible to remove at most $\e n$ elements from each set $A_i$ to form sets $A'_i$ so that there are no solutions to the equation $a'_1 + a'_2 + \dots + a'_k = 0$ with $a'_i \in A'_i$ for all $i$.
\end{theorem}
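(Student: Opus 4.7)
The plan is to reduce to the triangle removal lemma (Theorem~\ref{thm:graphremoval}) in the case $k=3$ and to the hypergraph removal lemma (Theorem~\ref{thm:hyperremoval}) for general $k$. I describe $k=3$ in detail, since it already contains every essential idea.

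First I would build the Cayley-type tripartite graph $\Gamma$ on vertex classes $V_1, V_2, V_3$, each a disjoint copy of $G$. Place an edge from $x \in V_1$ to $y \in V_2$ iff $y - x \in A_1$, from $y \in V_2$ to $z \in V_3$ iff $z - y \in A_2$, and from $x \in V_1$ to $z \in V_3$ iff $x - z \in A_3$. Each $a \in A_i$ then determines a perfect matching $M_a$ of $n$ edges in the appropriate bipartite block. Since $(y-x)+(z-y)+(x-z)=0$ automatically, the triangles of $\Gamma$ are in bijection with pairs (solution, starting vertex $x \in V_1$), so $\Gamma$ has exactly $nS$ triangles, where $S$ is the number of solutions. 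Given the hypothesis $S \leq \d n^2$, this is at most $\d n^3 = (\d/27)(3n)^3$. Applying Theorem~\ref{thm:graphremoval} with parameter $\e_0 := \e/27$ yields a constant $\d = \d(\e) > 0$ (independent of $n$) and a set $E' \subseteq E(\Gamma)$ with $|E'| \leq \e_0(3n)^2 = \e n^2/3$ whose removal leaves $\Gamma$ triangle-free.

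The central step is converting this edge removal into an element removal, and it is driven by the averaging inequality
$$|M_{a_1} \cap E'| + |M_{a_2} \cap E'| + |M_{a_3} \cap E'| \geq n \qquad \text{for every solution } (a_1, a_2, a_3).$$
To see this, note that for each $x \in V_1$ the triangle $(x, x+a_1, x+a_1+a_2)$ must have at least one of its edges in $E'$; summing the three indicators over $x$ gives a total of at least $n$, and for each $i$ the map sending $x$ to the $i$-th edge of this triangle bijects $V_1$ with $M_{a_i}$. Now set $B_i := \{a \in A_i : |M_a \cap E'| \geq n/3\}$; any triple in $(A_1 \setminus B_1) \times (A_2 \setminus B_2) \times (A_3 \setminus B_3)$ summing to zero would violate the inequality, so removing $B_i$ from $A_i$ eliminates every solution. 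Since $(n/3)|B_i| \leq \sum_{a \in B_i} |M_a \cap E'| \leq |E'| \leq \e n^2/3$, we obtain $|B_i| \leq \e n$, as required.

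For $k \geq 4$ one would build an analogous $k$-partite $(k-1)$-uniform hypergraph on copies of $G$ in which copies of $K_k^{(k-1)}$ biject with (solution, starting vertex) pairs, apply Theorem~\ref{thm:hyperremoval}, and run the same threshold argument on ``heavily hit'' slabs in each vertex class. The main obstacle I would anticipate is designing the hypergraph so that each $a \in A_j$ corresponds to a clean matching of hyperedges respecting the translation action of $G$, and so that the averaging inequality persists in the form ``sum of hits across the $k$ slabs of a single solution is at least the orbit size''. Once the construction is in place, the threshold argument carries over verbatim, and $\d = \d(\e, k)$ inherits its shape from whichever removal lemma is invoked.
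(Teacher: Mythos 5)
Your $k=3$ argument is complete and correct: the tripartite Cayley graph, the exact count of $n S$ triangles, and the threshold argument on the matchings $M_a$ (whose pairwise disjointness for distinct $a \in A_i$ is what makes $\sum_{a \in B_i}|M_a \cap E'| \leq |E'|$ legitimate) all check out, and this is the classical Ruzsa--Szemer\'edi-style reduction. The survey, however, does not prove Theorem~\ref{thm:greenremoval} this way: it deduces it from the more general Theorem~\ref{thm:oneeqnremoval} of Kr\'al', Serra and Vena, whose proof builds a \emph{directed} graph on $G \times \{1,\dots,k\}$ with an edge from $(x,i)$ to $(xa_i,i+1)$ for each $a_i \in A_i$, so that solutions correspond to $n$ directed $k$-cycles each, and then applies the directed removal lemma (Theorem~\ref{thm:directedremoval}) followed by exactly your averaging step with threshold $n/k$. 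The point of the orientation is that it handles \emph{all} $k \geq 3$ while staying $2$-uniform: in an undirected $k$-partite graph with $k \geq 4$, a copy of $C_k$ need not wind once around the parts (e.g.\ a $4$-cycle can sit entirely between $V_1$ and $V_2$), so the clean bijection between cycles and solutions breaks, whereas a directed cycle in the layered digraph is forced to advance through the levels. This is presumably the obstruction that pushed you toward a $(k-1)$-uniform hypergraph for $k \geq 4$.

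That hypergraph route can be made to work (it is essentially how the removal lemma for \emph{systems} of equations, Theorem~\ref{thm:systemremoval}, is proved), but for a single equation it is both heavier and quantitatively much worse: invoking Theorem~\ref{thm:hyperremoval} costs Ackermann-type bounds on $\d^{-1}$, while the directed-graph route inherits the tower-of-height-$O(\log \e^{-1})$ bound from \cite{F11}. Moreover, your $k \geq 4$ case is only a plan: the linear forms defining the $k$ hyperedge types, the verification that copies of $K_k^{(k-1)}$ with one vertex per part biject with (solution, translate) pairs, and the check that no degenerate copies of $K_k^{(k-1)}$ arise are all left unspecified, and you correctly identify this as the main gap. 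If you want a complete and efficient proof for all $k$, replace the hypergraph step by the directed $k$-cycle construction above; your threshold argument then carries over verbatim with $n/3$ replaced by $n/k$.
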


It is an exercise to show that Green's result implies Roth's theorem. While Green's proof of this result relied on Fourier analytic techniques, an alternative proof was found by Kr\'al', Serra, and Vena \cite{KSV09}, who showed that the following more general result follows from an elegant reduction to the removal lemma in directed graphs. 

\begin{theorem} \label{thm:oneeqnremoval}
For any natural number $k \geq 3$ and any $\e > 0$, there exists $\d > 0$ such that if $G$ is a group of order $n$, $g \in G$ and $A_1, \dots, A_k$ are subsets of $G$ such that there are at most $\d n^{k-1}$ solutions to the equation $a_1 a_2 \cdots a_k = g$ with $a_i \in A_i$ for all $i$ then it is possible to remove at most $\e n$ elements from each set $A_i$ to form sets $A'_i$ so that there are no solutions to the equation $a'_1 a'_2 \cdots a'_k = g$ with $a'_i \in A'_i$ for all $i$.
\end{theorem}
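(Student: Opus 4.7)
My plan is to encode solutions of $a_1 a_2 \cdots a_k = g$ as copies of $C_k$ in an auxiliary Cayley-type graph and then apply Theorem~\ref{thm:graphremoval} with $H = C_k$. After replacing $A_k$ by $A_k g^{-1}$ I may assume $g = e$. I would then form the $k$-partite graph $\Gamma$ on $N := kn$ vertices with parts $V_1, \ldots, V_k$, each a labelled copy of $G$, placing an edge between $x \in V_i$ and $xa \in V_{i+1}$ for every $i \in \mathbb{Z}/k\mathbb{Z}$ and every $a \in A_i$. Because edges of $\Gamma$ run only between cyclically consecutive parts, every copy of $C_k$ in $\Gamma$ visits each $V_i$ exactly once, and such a cycle $v_1 v_2 \cdots v_k v_1$ corresponds precisely to the solution $a_i := v_i^{-1} v_{i+1} \in A_i$ (with $v_{k+1} = v_1$), automatically satisfying $a_1 \cdots a_k = e$. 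Moreover, simultaneous left-translation $x \mapsto wx$ on every $V_i$ is a free $G$-action on $\Gamma$ preserving each Cayley class $C_{a,i} := \{(x, xa) : x \in V_i\}$, so each solution contributes an orbit of exactly $n$ parallel cycles. In total $\Gamma$ carries at most $O_k(\d n^k)$ copies of $C_k$.

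Next I would apply Theorem~\ref{thm:graphremoval} to $\Gamma$ with $H = C_k$ and tolerance $\e' := \e/(2k^3)$; taking $\d$ sufficiently small in terms of $\e$ and $k$, the lemma yields a $C_k$-free subgraph $\Gamma' \subseteq \Gamma$ obtained by deleting at most $\e' N^2 = \e' k^2 n^2$ edges of $\Gamma$. I would then round to element-removal: for each $a \in A_i$ let $p_i(a) := |C_{a,i} \cap \Gamma'|/n$ be the fraction of its $n$ edges that survive, and set
\[
A_i' := \bigl\{ a \in A_i : p_i(a) > 1 - \tfrac{1}{2k} \bigr\}.
\]
Since the number of deleted edges between $V_i$ and $V_{i+1}$ equals $\sum_{a \in A_i}(1-p_i(a))\,n$ and is at most $\e' k^2 n^2$, a Markov bound gives $|A_i \setminus A_i'| \le 2k \cdot \e' k^2 n = \e n$. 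Finally, if some tuple $(a_1, \ldots, a_k) \in A_1' \times \cdots \times A_k'$ were still a solution, then for every $w \in G$ the parallel cycle $w, wa_1, wa_1 a_2, \ldots$ would use exactly one edge of $C_{a_i,i}$ in each layer, so by a union bound at most $\sum_i (1 - p_i(a_i))\,n < k \cdot n/(2k) = n/2$ of the $n$ parallel cycles could lose an edge to $\Gamma \setminus \Gamma'$, leaving more than $n/2$ parallel cycles entirely in $\Gamma'$ and contradicting $C_k$-freeness.

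The only genuinely non-routine step is the rounding. The graph removal lemma is permitted to delete \emph{arbitrary} edges, whereas I need a deletion of whole Cayley classes; the averaging above only succeeds because the free $G$-action forces every surviving solution to contribute $n$ edge-disjoint parallel cycles to $\Gamma$, so a ``most-of-the-class-survives'' threshold is strong enough to preclude any cycle at all. It is exactly the interplay between the Markov bound on class-losses and the parallel-cycle count that makes the two scales compatible and allows a theorem about deletable edges to imply a theorem about deletable group elements.
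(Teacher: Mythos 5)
Your overall strategy --- the $k$-partite auxiliary Cayley graph, the free $G$-action giving $n$ edge-disjoint parallel cycles per solution, an application of a removal lemma, and the threshold rounding from deleted edges back to deleted group elements --- is exactly the paper's argument, and your rounding step (Markov bound plus the union bound over the $n$ parallel cycles) is carried out correctly. The reduction to $g = e$ by right-translating $A_k$ is also fine, even for non-abelian $G$.

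However, there is a genuine gap in the step where you invoke the \emph{undirected} removal lemma, Theorem~\ref{thm:graphremoval}, with $H = C_k$. Your claim that ``every copy of $C_k$ in $\Gamma$ visits each $V_i$ exactly once'' is false when $k$ is even. A closed walk of length $k$ in the cycle of parts $V_1, \dots, V_k$ need not have winding number $\pm 1$ when $k$ is even: for $k = 4$, for instance, a $4$-cycle $u_1 u_2 u_3 u_4$ with $u_1, u_3 \in V_1$ and $u_2, u_4 \in V_2$ uses only edges between two consecutive parts and corresponds to no solution of the equation. These degenerate copies can be overwhelmingly numerous --- taking $A_1 = G$ makes the bipartite graph between $V_1$ and $V_2$ complete, which alone contains $\Theta(n^4) = \Theta(N^4)$ copies of $C_4$ --- so the hypothesis of Theorem~\ref{thm:graphremoval} (at most $\delta'' N^k$ copies of $C_k$) simply fails, and even if it did not, making $\Gamma$ genuinely $C_k$-free would require destroying all these spurious cycles, which cannot be done with $o(N^2)$ edge deletions. (For odd $k$ your parity argument does go through, since $k - 2j \equiv 0 \pmod{k}$ forces $j \in \{0, k\}$, so the argument is only incomplete for even $k$.) This is precisely why the paper works with a \emph{directed} auxiliary graph and appeals to the directed removal lemma, Theorem~\ref{thm:directedremoval}: a directed $k$-cycle must follow the edge orientations $(x,i) \to (y,i+1)$ and hence winds around the parts exactly once. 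To repair your proof, either orient the edges and cite Theorem~\ref{thm:directedremoval}, or use a colored/partite version of the removal lemma that counts only part-respecting copies of $C_k$.
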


This is stronger than Theorem \ref{thm:greenremoval} in two ways. Firstly, it applies to all groups and not just to abelian groups. Secondly, it applies to non-homogeneous equations, that is, $a_1 a_2 \cdots a_k = g$ for a general $g$, whereas Green only treats the homogeneous case where $g = 1$. To give some idea of their proof, we will need the following definition.

A directed graph is a graph where each edge has been given a direction. Formally, the edge set may be thought of as a collection of ordered pairs. We will always assume that the directed graph has no loops and does not contain parallel directed edges, though we do allow anti-parallel edges, that is, both the edge $\vec{uv}$ and the edge $\vec{vu}$. The following analogue of the graph removal lemma for directed graphs was proved by Alon and Shapira \cite{AlSh04} as part of their study of property testing in directed graphs.

\begin{theorem} \label{thm:directedremoval}
For any directed graph $H$ and any $\e > 0$, there exists $\d > 0$ such that any directed graph on $n$ vertices which contains at most $\d n^{v(H)}$ copies of $H$ may be made $H$-free by removing at most $\e n^2$ edges. 
\end{theorem}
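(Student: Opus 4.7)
My plan is to mimic the standard proof of the undirected graph removal lemma (Theorem~\ref{thm:graphremoval}), extending both Szemer\'edi's regularity lemma and the associated counting lemma to the directed setting. For ordered disjoint subsets $S,T\subset V(G)$, let $\vec d(S,T)$ denote the fraction of pairs in $S\times T$ forming a directed edge of $G$, and call an ordered pair $(S,T)$ directed $\epsilon$-regular if for all $S'\subset S$, $T'\subset T$ with $|S'|\geq \epsilon|S|$ and $|T'|\geq \epsilon|T|$ we have both $|\vec d(S',T')-\vec d(S,T)|\leq \epsilon$ and $|\vec d(T',S')-\vec d(T,S)|\leq \epsilon$. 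A directed regularity lemma, asserting the existence of an equitable partition $V=V_1\cup\cdots\cup V_k$ with $k$ bounded by a tower in $\epsilon^{-1}$ such that all but $\epsilon k^2$ of the ordered pairs $(V_i,V_j)$ are directed $\epsilon$-regular, can be proved by the usual mean-square-density increment argument of Lemma~\ref{keyregclaim}, where at each step witness subsets for both orientations of each irregular pair are incorporated into the refinement simultaneously.

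Armed with this directed regularity lemma, I would apply it to $G$ with parameter $\gamma=\epsilon^{v(H)}/(4v(H))$ and form $G'$ from $G$ by deleting every directed edge from $V_i$ to $V_j$ whenever $(V_i,V_j)$ is not directed $\gamma$-regular or satisfies $\vec d(V_i,V_j)\leq \epsilon$. Just as in the undirected case, fewer than $\epsilon n^2$ directed edges are removed, so if $G'$ is $H$-free we are done. Otherwise a copy of $H$ in $G'$ yields a map $\phi:V(H)\to[k]$ such that, for every directed edge $\vec{ij}\in H$, the ordered pair $(V_{\phi(i)},V_{\phi(j)})$ is directed $\gamma$-regular with $\vec d(V_{\phi(i)},V_{\phi(j)})>\epsilon$. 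A directed counting lemma, proved by the standard greedy embedding argument (embedding vertices of $H$ one at a time and using directed regularity to maintain a large candidate set for each unembedded vertex, filtering separately for the already-placed in- and out-neighbours), then produces at least $2^{-v(H)}\epsilon^{e(H)}\prod_i|V_{\phi(i)}|$ labelled copies of $H$ in $G$, which exceeds $\delta n^{v(H)}$ and contradicts the hypothesis provided $\delta$ is chosen sufficiently small in terms of $\epsilon$ and $H$.

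The only step that requires genuine (if routine) verification is the directed regularity lemma itself: one must check that the mean-square-density increment argument still yields a density jump of the same order when irregularity may come from either of the two orientations of a pair, so that a single equitable partition can be made regular for both directions at once. Once this is in hand, nothing else is specific to directed graphs, and the argument in fact adapts without change to any fixed finite alphabet of edge colours, giving a removal lemma for edge-coloured graphs as well.
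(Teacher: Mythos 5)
Your argument is correct, and it is essentially the standard proof. Note that the survey does not actually prove Theorem~\ref{thm:directedremoval} --- it only cites Alon and Shapira \cite{AlSh04} --- so there is no in-paper proof to compare against; but your route (directed regularity lemma by energy increment, cleaning, greedy counting) is the natural adaptation and is sound. One remark that makes the step you flag as ``requiring genuine verification'' even more routine: since $\vec d(T',S')=\vec d_{\mathrm{rev}}(S',T')$ is just the forward density with the roles of the two sets swapped, regularity of the asymmetric density function $\vec d$ over all \emph{ordered} pairs $(V_i,V_j)$ already controls both orientations of every unordered pair, so the increment argument of Lemma~\ref{keyregclaim} goes through verbatim with $q(P)=\sum_{i,j}\vec d(V_i,V_j)^2|V_i||V_j|/|V|^2$ summed over ordered pairs --- nowhere does the proof of Szemer\'edi's lemma use symmetry of the adjacency relation. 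An equivalent and equally standard alternative is to encode the digraph as a $4$-colouring of the pairs of vertices (no edge, $u\to v$, $v\to u$, both) and invoke a coloured regularity and removal lemma, which is the reduction underlying your closing observation about edge-coloured graphs. The only small technicalities worth keeping in mind are the usual ones: start the iteration from an initial partition into at least $\epsilon^{-1}$ parts so that edges inside parts (and copies of $H$ using two vertices from the same part) are negligible, and state the counting lemma for not necessarily disjoint sets $W_i$, exactly as in Lemma~\ref{lem:AFKSusualcount}.
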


We will show how to prove Theorem \ref{thm:oneeqnremoval} with $g = 1$ using Theorem \ref{thm:directedremoval}. Suppose that $G$ is a group of order $n$ and $A_1, \dots, A_k$ are subsets of $G$ such that there are at most $\d n^{k-1}$ solutions to the equation $a_1 a_2 \cdots a_k = 1$ with $a_i \in A_i$ for all $i$. Consider the auxiliary directed graph $\Gamma$ whose vertex set is $G \times \{1, 2, \dots, k\}$. We place an edge from $(x, i)$ to $(y, i+1)$, where addition is taken modulo $k$, if there exists $a_i \in A_i$ such that $x a_i = y$. It is easy to see that any directed cycle in $\Gamma$ corresponds to a solution of the equation $a_1 a_2 \cdots a_k = 1$. Moreover, every such solution will result in $n$ different directed cycles in $\Gamma$, namely, those with vertices $(x, 1), (x a_1, 2), (xa_1a_2,3),\dots, (x a_1 \cdots a_{k-1}, k)$.

Since $G$ has at most $\d n^{k-1}$ solutions to $a_1 a_2 \cdots a_k = 1$, this implies that there are at most $\d n^k$ directed cycles in $\Gamma$. By Theorem \ref{thm:directedremoval}, for an appropriately chosen $\d$, we may therefore remove at most $\frac{\e}{k} n^2$ edges to make it free of directed cycles of length $k$. In $A_i$, we now remove the element $a_i$ if at least $\frac{n}{k}$ edges of the form $(x, i) (x a_i, i+1)$ have been removed. Note that this results in us removing at most $\e n$ elements from each $A_i$. Suppose now that the remaining sets $A'_i$ are such that there is a solution $a'_1 a'_2 \dots a'_k = 1$ with $a'_i \in A'_i$ for all $i$. Then, as above, there are at least $n$ cycles $(x, 1), (x a'_1, 2), \dots, (x a'_1 \cdots a'_{k-1}, k)$ corresponding to this solution. Since we must have removed one edge from each of these cycles, we must have removed at least $\frac{n}{k}$ edges of the form $(y, i) (y a'_i, i+1)$ for some $i$. But this implies that $a'_i \not\in A'_i$, yielding the required contradiction.

It was observed by Fox \cite{F11} that $\d^{-1}$ in Theorem \ref{thm:directedremoval} may, like the graph removal lemma, be taken to be at most a tower of twos of height logarithmic in $\epsilon^{-1}$. This may in turn be used to give a similar bound for $\d^{-1}$ in Theorem \ref{thm:oneeqnremoval}.

In \cite{KSV09}, Kr\'al', Serra and Vena also showed how to prove a removal lemma for systems of equations which are graph representable, in the sense that they can be put in a natural correspondence with a directed graph. An example of such a system is 
\begin{align*}
x_1 x_2 x_4^{-1} x_3^{-1} & = 1\\
x_1 x_2 x_5^{-1} & = 1.
\end{align*}
This idea of associating a system of linear equations with a directed graph representation was extended to hypergraphs independently by Kr\'al', Serra and Vena \cite{KSV12} and by Shapira \cite{Sh09, Sh10} in order to prove the following theorem (some partial results had been obtained earlier by Kr\'al', Serra and Vena \cite{KSV08}, Szegedy \cite{Sz10} and Candela \cite{Can09}). 

\begin{theorem} \label{thm:systemremoval}
For any natural numbers $k$ and $\ell$ and any $\e > 0$, there exists $\d > 0$ such that if $F$ is the field of size $n$, $M$ is an $\ell \times k$ matrix with coefficients in $F$, $b \in F^\ell$ and $A_1, \dots, A_k$ are subsets of $F$ such that there are at most $\d n^{k - \ell}$ solutions $a = (a_1, \dots, a_k)$ of the system $Ma = b$ then it is possible to remove at most $\e n$ elements from each set $A_i$ to form sets $A'_i$ so that there are no solutions $a' = (a'_1, \dots, a'_k)$ to the equation $M a' = b$ with $a'_i \in A'_i$ for all $i$.
\end{theorem}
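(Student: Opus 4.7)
The plan is to reduce Theorem~\ref{thm:systemremoval} to the hypergraph removal lemma (Theorem~\ref{thm:hyperremoval}), exactly as Theorem~\ref{thm:oneeqnremoval} was reduced to Theorem~\ref{thm:directedremoval} in the single equation case. This is the approach of Kr\'al', Serra and Vena~\cite{KSV12} and, independently, Shapira~\cite{Sh09, Sh10}. Without loss of generality I would assume $M$ has rank $\ell$ (otherwise delete redundant rows, or note that the system is infeasible, in which case the theorem is vacuous), so that solutions of $Mx=b$ form an affine subspace of dimension $d := k - \ell$.

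I would then construct an auxiliary $(\ell+1)$-uniform $k$-partite hypergraph $\Gamma$ on a vertex set $V_1 \sqcup \cdots \sqcup V_k$ with each $|V_i| = n^{d-1}$, together with a fixed pattern hypergraph $\mathcal{K}$ on $k$ vertices (one in each part, with hyperedges indexed by suitable $(\ell+1)$-subsets of $[k]$), designed so that labelled copies of $\mathcal{K}$ in $\Gamma$ are in $n^{d-1}$-to-one correspondence with solutions $(a_1,\dots,a_k)$ of $Mx=b$ with $a_i \in A_i$. Intuitively, each hyperedge of $\Gamma$ is indexed by an element $a_i \in A_i$ together with auxiliary coordinates, and the pattern $\mathcal{K}$ encodes the $\ell+1$ local linear relations imposed by $M$ on each admissible $(\ell+1)$-subset of variables (generalising the directed $k$-cycle that appears in the single equation reduction). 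Provided this correspondence is established, the hypothesis of at most $\delta n^d$ solutions translates to at most $\delta' |V(\Gamma)|^{v(\mathcal{K})}$ copies of $\mathcal{K}$ in $\Gamma$ for some $\delta' = \delta'(\delta,k,\ell)$.

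Applying Theorem~\ref{thm:hyperremoval} with a suitably small $\epsilon' = \epsilon'(\epsilon, k, \ell)$, I would obtain a set of at most $\epsilon' |V(\Gamma)|^{\ell+1}$ hyperedges whose removal destroys every copy of $\mathcal{K}$ in $\Gamma$. The key point now is that the hyperedges of $\Gamma$ are partitioned into ``parallel classes'', each of size proportional to $n^{d-1}$, indexed by pairs $(i, a_i)$ with $a_i \in A_i$. Declare $a_i \in A_i$ to be bad if more than half of its parallel class was deleted, and let $A_i' \subseteq A_i$ consist of the good elements; an averaging argument then shows that $|A_i \setminus A_i'| \leq \epsilon n$ for each $i$, provided $\epsilon'$ is a small enough multiple of $\epsilon$. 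If a solution $(a_1',\dots,a_k')$ with $a_i' \in A_i'$ were to survive, it would lift to $n^{d-1}$ distinct copies of $\mathcal{K}$, each of which would need to have a hyperedge deleted; but since every $a_i'$ is good, each contributes at most $\tfrac{1}{2} n^{d-1}$ deletions across the hyperedges containing its vertex, and a pigeonhole count of surviving copies against the total number of available deletions yields the required contradiction.

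The main obstacle in carrying out this plan is the construction of $\Gamma$ and $\mathcal{K}$ for a general coefficient matrix $M$: one must verify a clean $n^{d-1}$-to-one correspondence between copies of $\mathcal{K}$ in $\Gamma$ and solutions of $Mx=b$, and ensure that every element $a_i$ contributes exactly the same number of hyperedges across the admissible $(\ell+1)$-subsets of $[k]$ containing $i$, so that the averaging argument is valid. This is precisely the content of the hypergraph representability framework of~\cite{KSV12, Sh09, Sh10} and depends on careful linear-algebraic bookkeeping over $F$; the remaining steps, once this combinatorial encoding is in place, are essentially mechanical.
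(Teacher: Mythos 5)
Your overall strategy---encode the system $Ma=b$ as a fixed pattern hypergraph $\mathcal{K}$ inside an auxiliary partite hypergraph $\Gamma$ whose hyperedges are grouped into parallel classes indexed by pairs $(i,a_i)$, apply hypergraph removal, and then declare an element bad if more than half of its class is deleted---is indeed the route of Kr\'al', Serra and Vena \cite{KSV12} and of Shapira \cite{Sh09,Sh10}, which is all the survey itself offers for this theorem. However, as a proof your proposal has a genuine gap, and you say so yourself: the construction of $\Gamma$ and $\mathcal{K}$ for a general $\ell \times k$ matrix $M$ is never carried out, and this construction \emph{is} the proof. Everything you do actually write down (the translation of ``at most $\delta n^{k-\ell}$ solutions'' into ``few copies of $\mathcal{K}$'', the averaging over parallel classes, the final pigeonhole) is the easy, mechanical part that already appears in the single-equation argument in Section~\ref{sec:arithmeticremoval}. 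What must be verified---and is nontrivial---is that for an arbitrary $M$ one can choose the uniformity, the parts, and the hyperedge rule so that (i) every labelled copy of $\mathcal{K}$ in $\Gamma$, not just the intended ones, arises from a solution, (ii) each solution lifts to exactly the same number of copies, and (iii) each element $a_i$ contributes a parallel class of the same size. Asserting specific parameters such as $|V_i| = n^{d-1}$ and uniformity $\ell+1$ without exhibiting the hyperedge rule does not establish any of this.

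A second, related issue: you invoke the plain hypergraph removal lemma (Theorem~\ref{thm:hyperremoval}), whereas both published proofs use a \emph{colored} variant due to Austin and Tao \cite{AT10}. This is not cosmetic. In the single-equation case the reduction already needs the \emph{directed} graph removal lemma rather than Theorem~\ref{thm:graphremoval}, precisely because the orientation is what forces every copy of the pattern to traverse the parts in the intended order and hence to correspond to a genuine solution. In the hypergraph setting the analogous bookkeeping (which parallel class, equivalently which variable $x_i$, a hyperedge encodes) must be enforced by colors or by a partite version of removal; without it, point (i) above fails and the count of copies of $\mathcal{K}$ need not be controlled by the number of solutions. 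So the proposal should either prove or cite a colored/partite hypergraph removal lemma and then supply the representation; as written, the argument is an accurate description of the shape of the known proofs rather than a proof.
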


An easy application of this result shows that a removal lemma for systems of linear equations holds in the set $[n]$, confirming a conjecture of Green \cite{G05}. We remark that this result easily implies 
Szemer\'edi's theorem. Both proofs use a colored variant of the hypergraph removal lemma due to Austin and Tao \cite{AT10}, though the representations which they use to transfer the problem to hypergraphs are different. 

It would be interesting to know whether an analogous statement holds for all groups. A partial extension of these results to abelian groups is proved in \cite{KSV122} (see also \cite{Sz10}) but already in this case there are technical difficulties which do not arise for finite fields. 

\section{Sparse removal} \label{sec:sparseremoval}

Given graphs $\Gamma$ and $H$, let $N_H(\Gamma)$ be the number of copies of $H$ in $\Gamma$. A possible generalization of the graph removal lemma, which corresponds to the case $\Gamma = K_n$, could state that if $G$ is a subgraph of $\Gamma$ with $N_H(G) \leq \d N_H(\Gamma)$ then $G$ may be made $H$-free by deleting at most $\e e(\Gamma)$ edges. Unfortunately, this is too much to hope in general. However, if the graph $\Gamma$ is sufficiently well-behaved, such an extension does hold. We will discuss two such results here. 

\subsection{Removal in random graphs} \label{sec:random}

The binomial random graph $G_{n,p}$ is formed by taking $n$ vertices and considering each pair of vertices in turn, choosing each connecting edge to be in the graph independently with probability $p$. These graphs were introduced by Erd\H{o}s and R\'enyi \cite{ER59, ER60} in the late fifties\footnote{The notion was also introduced independently by several other authors at about the same time but, quoting Bollob\'as \cite{B01}, ``Erd\H{o}s and R\'enyi introduced the methods which underlie the probabilistic treatment of random graphs. The other authors were all concerned with enumeration problems and their techniques were essentially deterministic."} and their study has grown enormously since then (see, for example, the monographs \cite{B01, JLR00}). 

Usually, one is interested in finding a threshold function $p^* := p^*(n)$ where the probability that the random graph $G_{n,p}$ has a particular property $\mathcal{P}$ changes from $o(1)$ to $1-o(1)$ as we pass from random graphs chosen with probability $p \ll p^*$ to those chosen with probability $p \gg p^*$. For example, the threshold for the random graph to be connected is at $p^*(n) = \frac{\ln n}{n}$. 

One theme that has received a lot of attention in recent years is the question of determining thresholds for the appearance of certain combinatorial properties. One well-studied example is the Ramsey property. Given a graph $H$ and a natural number $r \geq 2$, we say that a graph $G$ is {\it $(H,r)$-Ramsey} if in any $r$-coloring of the edges of $G$ there is guaranteed to be a monochromatic copy of $H$. Ramsey's theorem \cite{R30} is itself the statement that $K_n$ is $(H, r)$-Ramsey for $n$ sufficiently large. The following celebrated result of R\"odl and Ruci\'nski \cite{RR93, RR95} from 1995 (see also \cite{JLR00}, Chapter 8) determines the threshold for the appearance of the Ramsey property in random graphs.

\begin{theorem} \label{thm:RR}
For any graph $H$ that is not a forest consisting of stars and paths of length $3$ and every positive integer~$r \geq 2$, there exist constants $c, C > 0$ such that 
\[
\lim_{n \rightarrow \infty} \mathbb{P} \big(G_{n,p} \mbox{ is $(H,r)$-Ramsey}\big) =
\begin{cases}
0, & \text{if $p < c n^{-1/m_2(H)}$}, \\
1, & \text{if $p > C n^{-1/m_2(H)}$},
\end{cases}
\]
where 
\[m_2(H) = \max\left\{\frac{e(H') - 1}{v(H') - 2}: H' \subseteq H \mbox{ and } v(H') \geq 3\right\}.\]
\end{theorem}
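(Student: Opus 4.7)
The plan splits into a zero-statement (typically non-Ramsey when $p < cn^{-1/m_2(H)}$) and a one-statement (typically Ramsey when $p > Cn^{-1/m_2(H)}$); these require completely different techniques.

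For the zero-statement, I would use the deletion method to exhibit an $r$-coloring of $G_{n,p}$ with no monochromatic copy of $H$. Let $H^\ast$ be a subgraph of $H$ attaining the maximum in the definition of $m_2(H)$, so that $(e(H^\ast)-1)/(v(H^\ast)-2) = m_2(H)$. A direct calculation shows that at $p = cn^{-1/m_2(H)}$ the expected number of copies of $H^\ast$ in $G_{n,p}$ is of the same order as the expected number of edges (the exponents match precisely by the choice of $m_2$), and can be made small relative to the edge count by taking $c$ small. I would then take a uniformly random $r$-coloring of the edges, observe that the expected number of monochromatic copies of $H^\ast$ is $o(pn^2)$, and, via a concentration plus local recolouring argument (recolouring one edge per monochromatic copy and absorbing the modification by running the argument at a slightly larger auxiliary $p'$), conclude that $G_{n,p}$ itself has, with high probability, a proper $r$-coloring avoiding monochromatic copies of $H^\ast$, and therefore of $H$. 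The excluded forests of stars and short paths must be handled separately because for those $H$ the extremal subgraph $H^\ast$ is degenerate and the deletion argument fails at the claimed threshold.

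For the one-statement, I would invoke sparse regularity. The sparse regularity lemma of Kohayakawa and R\"odl guarantees that with high probability $G_{n,p}$ admits an equipartition in which almost all pairs are $(\epsilon,p)$-regular, that is, densities normalised by $p$ are stable under restriction to large subsets. Given any $r$-coloring of $E(G_{n,p})$, a pigeonhole argument yields a majority colour whose restriction to a positive fraction of the regular pairs has normalised density at least $1/r$. To conclude that this majority colour class contains a copy of $H$, one needs a sparse counting lemma asserting that every sufficiently $(\epsilon, p)$-regular subgraph of $G_{n,p}$ of positive normalised density contains the expected number of copies of $H$. This is precisely the Kohayakawa--\L uczak--R\"odl conjecture.

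The hard part of the plan is this sparse counting lemma. In the dense setting the counting lemma is a few-line consequence of regularity, but for sparse graphs one must rule out the anomalous possibility that a regular-looking subgraph of $G_{n,p}$ contains no copies of $H$ whatsoever. The original R\"odl--Ruci\'nski proof sidesteps the KLR conjecture with a delicate hands-on embedding argument tailored to the randomness of $G_{n,p}$, whereas the cleanest modern proof invokes the recent full resolution of KLR, obtained via the transference principle of Conlon--Gowers and Schacht and via the hypergraph container method of Balogh--Morris--Samotij and Saxton--Thomason. A secondary technical point is aligning the constants $c$ and $C$ to the same threshold $n^{-1/m_2(H)}$, which in both directions is forced by the same extremal subgraph $H^\ast$.
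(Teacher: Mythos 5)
The paper does not actually prove Theorem~\ref{thm:RR}: it is a survey, and this theorem is stated as a cited result of R\"odl and Ruci\'nski \cite{RR93, RR95}, accompanied only by the heuristic that the threshold sits where the count of the densest subgraph $H^*$ (in the $m_2$ sense) matches the edge count. Your outline reproduces that heuristic and the correct two-part architecture. For the $1$-statement, your route (sparse regularity plus a sparse embedding lemma supplied by the resolution of the K\L R conjecture, or by the transference machinery of Conlon--Gowers and Schacht) is a legitimate modern proof, though not the original one, which is a bespoke embedding argument. The one step you elide there: after pigeonhole gives each good pair of parts a majority colour, you still need Ramsey's theorem applied to the (almost complete) reduced graph to extract $v(H)$ parts that are pairwise dense and regular in the \emph{same} colour before the embedding lemma can be invoked.

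The genuine gap is in the $0$-statement. The first-moment computation is fine: at $p = cn^{-1/m_2(H)}$ the expected number of copies of $H^*$ is $\Theta(c^{e(H^*)-1})\cdot pn^2$, so a uniformly random $r$-colouring leaves, on average, few monochromatic copies per edge. But ``recolouring one edge per monochromatic copy'' is not a proof step: recolouring an edge can create new monochromatic copies of $H^*$ in its new colour, and the process can cascade precisely because copies of $H^*$ overlap --- there is no monotone quantity guaranteeing termination, and no deletion is available since the Ramsey property requires a colouring of \emph{all} edges. This is exactly where the length and difficulty of \cite{RR93} lives: the actual argument analyses the hypergraph of copies of $H$ and shows that w.h.p.\ every edge-connected cluster of overlapping copies is small and structurally sparse (any union of copies that is too dense has vanishing expected count, by the choice of $m_2$ and of small $c$), so each cluster admits a proper colouring in isolation; the excluded forests of stars and paths are exactly the cases where this clustering analysis breaks down. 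As written, your absorption step would not close, so the core of the $0$-statement is missing.
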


The threshold occurs at the largest value of $p^*$ such that there is some subgraph $H'$ of $H$ for which the number of copies of $H'$ is approximately the same as the number of edges. For $p$ significantly smaller than $p^*$, the number of copies of $H'$ will also be significantly smaller than the number of edges. This property allows us (by a rather long and difficult argument \cite{RR93}) to show that the edges of the graph may be colored in such a way as to avoid any monochromatic copies of $H'$. For $p$ significantly larger than $p^*$, every edge of the random graph is contained in many copies of every subgraph of $H$. The intuition, which takes substantial effort to make rigorous \cite{RR95}, is that these overlaps are enough to force the graph to be Ramsey.

Many related questions were studied in the late nineties. In particular, people were interested in determining the threshold for the following Tur\'an property. Given a graph $H$ and a real number $\e > 0$, we say that a graph $G$ is {\it $(H, \e)$-Tur\'an} if every subgraph of $G$ with at least 
\[\left(1 - \frac{1}{\chi(H) - 1} + \e\right) e(G)\]
edges contains a copy of $H$. The classical Erd\H{o}s-Stone-Simonovits theorem \cite{ESi66, ES46, T41} states that the graph $K_n$ is $(H, \e)$-Tur\'an for $n$ sufficiently large. Resolving a conjecture of Haxell, Kohayakawa, \L uczak and R\"odl \cite{HKL96, KLR97}, Conlon and Gowers \cite{CG12} and, independently, Schacht \cite{S12} proved the following theorem. It is worth noting that the result of Conlon and Gowers applies in the strictly balanced case, that is, when $m_2(H') < m_2(H)$ for all $H' \subset H$, while Schacht's result applies to all graphs. However, the class of strictly balanced graphs includes most of the graphs one would naturally consider, such as cliques or cycles.

\begin{theorem} \label{thm:randomTuran}
For any graph $H$\footnote{Note that if $H = K_2$, we take $m_2(H) = \frac{1}{2}$.} and any $\e > 0$, there exist positive
constants $c$ and $C$ such that
\[
\lim_{n \rightarrow \infty} \mathbb{P} \big(G_{n,p} \mbox{ is $(H,\e)$-Tur{\'a}n}\big) =
\begin{cases}
0, & \text{if $p < c n^{-1/m_2(H)}$}, \\
1, & \text{if $p > C n^{-1/m_2(H)}$}.
\end{cases}
\]
\end{theorem}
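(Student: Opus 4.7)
The proof naturally splits into the \emph{0-statement} (failure of the Tur\'an property when $p < cn^{-1/m_2(H)}$) and the significantly harder \emph{1-statement} (the property holds when $p > Cn^{-1/m_2(H)}$). For the 0-statement, I would pick a subgraph $H' \subseteq H$ achieving the maximum defining $m_2(H)$, so that $m_2(H) = (e(H')-1)/(v(H')-2)$. The expected number of copies of $H'$ in $G_{n,p}$ is of order $n^{v(H')} p^{e(H')} = n^2 p \cdot (n p^{m_2(H)})^{e(H')-1}$, which for $p = c n^{-1/m_2(H)}$ with $c$ small is only a tiny fraction of $\mathbb{E}[e(G_{n,p})] = \binom{n}{2} p$. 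Using Janson-type tail bounds to control the number of copies of $H'$, one can with high probability delete one edge from each such copy, destroying all copies of $H$, at the cost of only an $o(1)$ fraction of edges. The resulting $H$-free subgraph comfortably exceeds the $(1 - 1/(\chi(H)-1) + \e) e(G_{n,p})$ threshold, certifying that $G_{n,p}$ is not $(H, \e)$-Tur\'an.

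For the 1-statement I would follow the Kohayakawa--\L uczak--R\"odl blueprint, reducing the problem to a sparse regularity lemma together with a sparse counting lemma. First, apply the \emph{sparse regularity lemma} of Kohayakawa and R\"odl to an arbitrary subgraph $G \subseteq G_{n,p}$; this produces an equitable partition of $V(G)$ into boundedly many parts in which most pairs are $\e$-regular \emph{relative to} $p$, meaning density deviations on large subsets are at most $\e p$. Next, assuming $G$ has at least $(1 - 1/(\chi(H)-1) + \e) e(G_{n,p})$ edges, a weighted Erd\H{o}s--Stone--Simonovits argument inside the reduced graph on the parts locates $\chi(H)$ parts which are pairwise sparse-regular with relative density bounded below by some $d > 0$. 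If this configuration is guaranteed to contain a copy of $H$, the 1-statement follows.

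The main obstacle is precisely this last step, historically known as the K\L R conjecture: showing that, with high probability in $G_{n,p}$, every subgraph presenting sparse-regular pairs of relative density at least $d$ on a copy of $K_{\chi(H)}$ already contains the expected number of copies of $H$. In the sparse regime, regularity alone does not yield a counting lemma, since $G_{n,p}$ can host locally atypical structure that wipes out copies of $H$. To handle it I would pursue the transference approach of Conlon--Gowers: prove an abstract theorem asserting that any subgraph of a sufficiently pseudorandom sparse host graph is close, in an appropriate cut-type norm, to a \emph{dense model} on the same vertex set, so that the classical dense counting lemma applies to the model and can then be pulled back to $G_{n,p}$. This needs a Cauchy--Schwarz-driven dense-model construction (in the spirit of the Green--Tao theorem) combined with Janson-type concentration to establish the required pseudorandomness of $G_{n,p}$. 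The strict-balancedness hypothesis $m_2(H') < m_2(H)$ for $H' \subsetneq H$ enters at this stage to guarantee that no proper subgraph of $H$ is overrepresented in $G_{n,p}$, a property needed to control error terms during transference; Schacht's alternative extremal argument bypasses explicit counting via a greedy stability estimate but must confront the same essential difficulty.
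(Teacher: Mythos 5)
The survey does not actually prove Theorem \ref{thm:randomTuran}: it is quoted as a black box from Conlon--Gowers \cite{CG12} and Schacht \cite{S12}, so your sketch has to stand on its own as an outline of those proofs rather than be compared to an in-paper argument. Your 0-statement is the standard deletion argument and is essentially right: at $p = cn^{-1/m_2(H)}$ the expected number of copies of the $2$-densest subgraph $H'$ is $\Theta(c^{e(H')-1}n^2p)$, and concentration of both the copy count and the edge count lets you a.a.s.\ delete one edge per copy while keeping a $(1-O(c^{e(H')-1}))$-fraction of the edges of an $H$-free subgraph. (Two small points: one must implicitly assume $\e < 1/(\chi(H)-1)$, since otherwise the Tur\'an property is vacuous and the 0-statement is false; and your factorization should read $(n^{1/m_2(H)}p)^{e(H')-1}$ rather than $(np^{m_2(H)})^{e(H')-1}$, though both are $\Theta(1)$ at the threshold so the conclusion is unaffected.)

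The genuine gap is in the 1-statement, which is where all of the content of the theorem lives. You correctly identify the two available engines --- transference to a dense model \`a la Conlon--Gowers, or Schacht's extremal/multi-round-exposure argument --- but you only name them: the dense-model construction, the verification of the pseudorandomness hypotheses of $G_{n,p}$ at $p = Cn^{-1/m_2(H)}$, and the pull-back of the counting lemma are each substantial arguments, and ``apply transference'' is no more a proof of the K\L R-type embedding step than ``apply K\L R'' would be. More seriously, the route you commit to is known to yield the theorem only for strictly balanced $H$, i.e.\ when $m_2(H')<m_2(H)$ for every proper subgraph $H'$; the theorem as stated holds for \emph{all} graphs $H$, and there is no easy reduction to the strictly balanced case, since the subgraphs realizing $\chi(H)$ and $m_2(H)$ need not coincide. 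Covering general $H$ requires either Schacht's argument, which you mention only in passing without engaging its mechanism, or the later container-based proofs of the K\L R conjecture (Balogh--Morris--Samotij, Saxton--Thomason, and the variant in \cite{CGSS12}). As written, therefore, your proposal establishes at best the strictly balanced case of the stated theorem.
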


The results of \cite{CG12} and \cite{S12} (see also \cite{FRS10}) allow one to prove thresholds for the appearance of many different combinatorial properties. For example, the results extend without difficulty to prove analogues of Theorems \ref{thm:RR} and \ref{thm:randomTuran} for hypergraphs. The results also apply to give thresholds in different contexts - one example is an extension of Szemer\'edi's theorem to random subsets of the integers. 

Perhaps surprisingly, the methods used in \cite{CG12} and \cite{S12} are very different and have different strengths and weaknesses. We have already mentioned that Schacht's results applied to all graphs while the results of Conlon and Gowers only applied to strictly balanced graphs. On the other hand, the results of \cite{CG12} also allowed one to transfer structural statements to the sparse setting, including the stability version of the Erd\H{o}s-Stone-Simonovits theorem \cite{Si68} and the graph removal lemma. More recently, Samotij \cite{Sj12} modified Schacht's method to extend this sparse stability theorem to all graphs. The result is the following theorem.

\begin{theorem} \label{thm:randomstab}
For any graph $H$ and any $\e > 0$, there exist positive constants $\d$ and $C$ such that if $p \geq C n^{-1/m_2(H)}$ then the following holds a.a.s.~in $G_{n,p}$. Every $H$-free subgraph of $G_{n,p}$ with at least $\left(1 - \frac{1}{\chi(H) - 1} - \d\right) p \binom{n}{2}$ edges may be made $(\chi(H) - 1)$-partite by deleting at most $\e p n^2$ edges.
\end{theorem}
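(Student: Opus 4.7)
The plan is to transfer the Erd\H{o}s--Simonovits stability theorem from the dense setting to $G_{n,p}$ via the probabilistic framework of Schacht \cite{S12}, incorporating the refinements of Samotij \cite{Sj12} that push the method beyond the strictly balanced case. The input is the dense stability theorem: for every $H$ with $\chi(H) \geq 3$ and every $\e > 0$ there exists $\delta_0 > 0$ such that every $H$-free graph on $n$ vertices with at least $(1 - \frac{1}{\chi(H)-1} - \delta_0)\binom{n}{2}$ edges can be made $(\chi(H)-1)$-partite by removing at most $\e n^2$ edges. The target is the sparse analogue with $\binom{n}{2}$ replaced by $p\binom{n}{2}$ and the graph living inside $G_{n,p}$.

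I would argue by contradiction: suppose with probability bounded away from zero there is an $H$-free subgraph $G' \subseteq G_{n,p}$ of density at least $(\pi(H) - \delta) p$ with edit distance to $(\chi(H)-1)$-partite exceeding $\e p n^2$, where $\pi(H) = 1 - 1/(\chi(H)-1)$. As a first reduction, Theorem \ref{thm:randomTuran} pins the density of such $G'$ to a narrow band around $\pi(H)p$, so that the relevant bad event is a stability failure rather than a density failure. The core task is then to show this bad event has probability $o(1)$, via a union bound over a suitably coarsened family of ``types'' of bad $G'$.

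The central technical step is a sparse transference of the dense stability statement. For each potential bad $G'$, I would extract a ``certificate'' consisting of a bounded collection of copies of $H$ that must appear in any dense supergraph of $G'$ on $[n]$; the dense stability theorem supplies these certificates in abundance because any graph $\e n^2$-far from $(\chi(H)-1)$-partite with the stipulated density contains $\Omega(n^{v(H)})$ copies of $H$ by a standard supersaturation strengthening of stability. Janson's inequality, applied at $p = Cn^{-1/m_2(H)}$, bounds the probability that all the edges needed to realize such a certificate are present in $G_{n,p}$, and the union bound over the coarsened families of certificates closes provided $C$ is sufficiently large and $\delta$ sufficiently small.

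The main obstacle is handling $H$ that are not strictly balanced, that is, graphs with some proper subgraph $H'$ satisfying $m_2(H') < m_2(H)$. At the threshold $p = \Theta(n^{-1/m_2(H)})$, copies of $H'$ are vastly more numerous than copies of $H$, so the concentration of $N_H(G_{n,p})$ required by the naive Janson bound degrades, and contributions from less balanced subgraphs swamp the signal. The Conlon--Gowers approach in \cite{CG12} sidesteps this by restricting to strictly balanced $H$; Samotij's innovation \cite{Sj12} is a reweighted enumeration of bad graphs that properly isolates the contribution of $H$ itself, essentially running an induction on the densest subgraphs of $H$. Making this reweighting compatible with the stability output from the dense theorem, while retaining quantitative control strong enough for the union bound to close, is the most delicate part of the argument.
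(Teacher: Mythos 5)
The paper itself offers no proof of Theorem~\ref{thm:randomstab}: it is stated as a cited result, due to Samotij \cite{Sj12} building on Schacht's method \cite{S12}, with the strictly balanced case following from Conlon and Gowers \cite{CG12}. So there is no in-paper argument to compare yours against, and your proposal must be judged on its own. You have correctly identified the inputs (the dense Erd\H{o}s--Simonovits stability theorem together with its supersaturation form, the density confinement coming from Theorem~\ref{thm:randomTuran}, and the fact that non-strictly-balanced $H$ is the real obstruction that \cite{Sj12} was written to remove), and the overall shape of the argument---transference by contradiction at $p = \Theta(n^{-1/m_2(H)})$---is the right one.

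The gap is that the decisive step is named but not performed. A union bound over bad subgraphs $G' \subseteq G_{n,p}$ cannot be taken literally: there are $2^{\Theta(pn^2)}$ candidate subgraphs, and Janson's inequality applied to a single certificate gives nothing close to compensating for that. The entire content of Schacht's method (and likewise of the container approach of \cite{BMS12, ST12}) is the construction of the ``suitably coarsened family of types''---a family of fingerprints of size $\exp(o(pn^2))$, each determining a small set of forced copies of $H$---and you invoke this coarsening without supplying it. Your appeal to ``a bounded collection of copies of $H$ that must appear in any dense supergraph of $G'$'' is also not quite the right object: $G'$ is sparse, so dense stability cannot be applied to $G'$ or to any supergraph of it directly; the dense graph to which stability and supersaturation are applied is an auxiliary majority/container graph associated with $G'$, and relating farness-from-partite of $G'$ (measured against $pn^2$) to farness of that dense object (measured against $n^2$) is itself a nontrivial part of \cite{Sj12}. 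As it stands, your text is a correct map of where the proof lives rather than a proof: the reweighted enumeration you attribute to Samotij, which you yourself identify as ``the most delicate part,'' is exactly what would need to be written out.
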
 

Recently, a third method was developed by Balogh, Morris and Samotij \cite{BMS12} and, simultaneously and independently, by Saxton and Thomason \cite{ST12} for proving sparse random analogues of combinatorial theorems. One of the results of their research is a proof of the K\L R conjecture of Kohayakawa, \L uczak and R\"odl \cite{KLR97}. This is a technical statement which allows one to prove an embedding lemma complementing the sparse regularity lemma of Kohayakawa \cite{K97} and R\"odl. A variant of this conjecture has also been proved by Conlon, Gowers, Samotij and Schacht \cite{CGSS12} using the methods of \cite{CG12, S12}. One of the applications of this latter result is the following sparse random analogue of the graph removal lemma (this was already proved for triangles in \cite{KLR96} and for strictly balanced graphs in \cite{CG12}). 

\begin{theorem} \label{thm:removal-Gnp}
For any graph $H$ and any $\e > 0$, there exist positive constants $\d$ and $C$ such that if $p \geq Cn^{-1/m_2(H)}$ then the following holds a.a.s.~in $G_{n,p}$. Every subgraph of $G_{n,p}$ which contains at most $\d p^{e(H)} n^{v(H)}$ copies of $H$ may be made $H$-free by removing at most $\e p n^2$ edges.
\end{theorem}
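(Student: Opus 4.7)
The plan is to mimic the standard proof of the graph removal lemma (Section~\ref{sec:usualremoval}), substituting the Kohayakawa--R\"odl sparse regularity lemma for Szemer\'edi's regularity lemma and the K\L R-type embedding/counting lemma of \cite{CGSS12} for the usual counting lemma (Lemma~\ref{lem:AFKSusualcount}). The K\L R conjecture, once available, is exactly the tool that compensates for the fact that edge densities in a subgraph of $G_{n,p}$ are measured relative to $p$, so copies of $H$ in a typical sample will come in roughly the expected count $\Theta(p^{e(H)} n^{v(H)})$ rather than $\Theta(n^{v(H)})$.

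First, fix an auxiliary parameter $\gamma = \gamma(\e, H) > 0$ much smaller than $\e$, to be tuned at the end. Let $C_0$ and $\xi > 0$ be the constants produced by applying the K\L R counting lemma to $H$ with density parameter $\gamma$ and regularity parameter $\gamma$; this yields a.a.s.\ the property that any collection of subsets $W_1,\dots,W_{v(H)}$ of $G_{n,p}$, each of linear size, such that the pairs $(W_i,W_j)$ corresponding to edges of $H$ are $(\gamma,p)$-regular with $G$-density at least $\gamma p$, contain at least $\xi\, p^{e(H)} \prod_i |W_i|$ copies of $H$ in $G$. I take $C \ge C_0$ large enough, and $\delta > 0$ small enough, so that $\delta \le \tfrac{1}{2}\xi\gamma^{e(H)} K^{-v(H)}$, where $K = K(\gamma)$ is the number of parts in the sparse regularity lemma.

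Given $G \subseteq G_{n,p}$, apply the Kohayakawa--R\"odl sparse regularity lemma to $G$ (with regularity $\gamma$, measured relative to $p$) to obtain an equipartition $V_1 \cup \dots \cup V_k$, $k \le K$, all but $\gamma k^2$ of whose pairs are $(\gamma,p)$-regular. Form $G'$ by deleting every edge of $G$ that lies in an irregular pair or in a pair of $G$-density less than $\gamma p$. The number of removed edges is at most $\gamma k^2 \cdot p(n/k)^2(1+o(1)) + \gamma p\binom{k}{2}(n/k)^2 \le \e p n^2$ a.a.s., using standard upper-tail estimates on edge counts between parts in $G_{n,p}$ (so the ``at most $p|V_i||V_j|(1+o(1))$'' bound applies uniformly). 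If $G'$ is $H$-free we are done; otherwise pick a copy of $H$ in $G'$, which gives an assignment of $V(H)$ into the partition so that every edge of $H$ lands in a pair that is both $(\gamma,p)$-regular and has $G$-density at least $\gamma p$. Applying the K\L R counting lemma to the corresponding tuple of parts yields at least $\xi p^{e(H)}\prod_i|V_i| \ge \xi K^{-v(H)} p^{e(H)} n^{v(H)} \ge 2\delta\, p^{e(H)} n^{v(H)}$ copies of $H$ in $G$, contradicting the hypothesis.

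The two main obstacles are the following. First, and most importantly, one needs the K\L R-type counting lemma in $G_{n,p}$ as a black box, and this is precisely the deep input supplied by \cite{CGSS12} (as well as by \cite{BMS12, ST12}); without it, one cannot close the loop between regular partitions of $G$ and actual copies of $H$ in the sparse setting. Second, care is required to verify that the ``cleaning step'' really removes at most $\e p n^2$ edges: this uses the fact that a.a.s.\ $G_{n,p}$ has roughly the expected number of edges between every pair of reasonably large disjoint vertex sets, an event one imposes on $G_{n,p}$ at the outset (valid for $p \gg n^{-1}$, which is much weaker than $p \ge Cn^{-1/m_2(H)}$). Once these two points are in hand, the argument is structurally identical to the dense removal lemma.
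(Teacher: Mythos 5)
Your proposal is correct and follows exactly the route the survey attributes for this theorem: the paper does not prove Theorem~\ref{thm:removal-Gnp} itself but cites it as an application of the counting version of the K\L R conjecture from \cite{CGSS12}, deduced via the Kohayakawa--R\"odl sparse regularity lemma in precisely the way you describe (cleaning irregular and sparse pairs, then contradicting the hypothesis with the sparse counting lemma). The only points worth tightening are routine: one also discards edges inside the parts $V_i$ (or invokes a counting lemma allowing non-disjoint $W_i$, as in Lemma~\ref{lem:AFKSusualcount}), and one needs the \emph{counting} (not merely embedding) form of K\L R, which you correctly flag as the essential black box.
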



Note that for any $\e$ there exists a positive constant $c$ such that if $p \leq c n^{-1/m_2(H)}$, the removal lemma is trivial. This is because, for $c$ sufficiently small, the number of copies of the densest subgraph $H'$ of $H$ will a.a.s.~be smaller than $\e p n^2$. Theorem \ref{thm:removal-Gnp} shows that it also holds for $p \geq C n^{-1/m_2(H)}$. This leaves a small intermediate range of $p$ where it might also be expected that a sparse removal lemma a.a.s.~holds. That this is so was conjectured by \L uczak \cite{L06}. 

For balanced graphs $H$, we may close the gap by letting $\delta$ be sufficiently small depending on $C, \e$ and $H$. Indeed, as $p \leq
Cn^{-1/m_2(H)}$, the number of copies of $H$ is a.a.s.~on
the order of $p^{e(H)}n^{v(H)} \leq C^{e(H)} p n^2$. Therefore, taking $\delta < \epsilon C^{-e(H)}$, we see that the number of copies of $H$ is a.a.s.~less than $\e p n^2$. Deleting one edge from each copy of $H$ in the graph then makes it H-free.

A sparse random analogue of the hypergraph removal lemma was proved in \cite{CG12} when $\mathcal{H} = K_{k+1}^{(k)}$. This result also extends to cover all strictly balanced hypergraphs.\footnote{We note that for $k$-uniform hypergraphs the relevant function is $m_k(\mathcal{H}) = \max\left\{\frac{e(\mathcal{H}') - 1}{v(\mathcal{H}') - k}\right\}$, where the maximum is taken over all subgraphs $\mathcal{H}'$ of $\mathcal{H}$ with at least $k+1$ vertices.} It would be interesting to extend this result to all hypergraphs.

It is worth noting that the sparse random version of the triangle removal lemma does not imply a sparse random version of Roth's theorem. This is because the reduction which allows us to pass from a subset of the integers with no arithmetic progressions of length $3$ to a graph containing few triangles gives us a graph with dependencies between its edges. This issue does not occur with pseudorandom graphs, which we discuss in the next section.

\subsection{Removal in pseudorandom graphs} \label{sec:pseudorandom}

Though there have long been explicit examples of graphs which behave like the random graph $G_{n,p}$, the first systematic study of what it means for a given graph to be like a random graph was initiated by Thomason \cite{Th, Th2}. Following him,\footnote{Strictly speaking, Thomason considered a slightly different notion, namely, that $|e(X) - p \binom{|X|}{2}| \leq \beta |X|$ for all $X \subseteq V$, but the two are closely related.} we say that a graph on vertex set $V$ is {\it $(p, \beta)$-jumbled} if, for all vertex subsets $X, Y \subseteq V$, 
\[|e(X,Y) - p|X||Y|| \leq \beta \sqrt{|X||Y|}.\]
The random graph $G_{n,p}$ is, with high probability, $(p, \beta)$-jumbled with $\beta = O(\sqrt{pn})$. This is also optimal in that a graph on $n$ vertices with $p \leq 1/2$ cannot be $(p, \beta)$-jumbled with $\beta = o(\sqrt{pn})$. The Paley graph is an example of an explicit graph which is optimally jumbled. This graph has vertex set $\mathbb{Z}_p$, where $p \equiv 1 (\mbox{mod } 4)$ is prime, and edge set given by connecting $x$ and $y$ if their difference is a quadratic residue. It is $(p, \beta)$-jumbled with $p = \frac{1}{2}$ and $\beta = O(\sqrt{n})$. Many more examples are given in the excellent survey \cite{KrSu}.

A fundamental result of Chung, Graham and Wilson \cite{CGW} states that for graphs of density $p$, where $p$ is a fixed positive constant, the property of being $(p, o(n))$-jumbled is equivalent to a number of other properties that one would typically expect in a random graph. For example, if the number of cycles of length $4$ is as one would expect in a binomial random graph then, surprisingly, this is enough to imply that the edges are very well-spread. 

For sparser graphs, the equivalences are less clear cut, but the notion of jumbledness defined above is a natural property to study. Given a graph property $\mathcal{P}$ that one would expect of a random graph, one can ask for the range of $p$ and $\beta$ for which a $(p, \beta)$-jumbled graph satisfies $\mathcal{P}$.

To give an example, it is known that there is a constant $c$ such that if $\beta \leq c p^2 n$ then any $(p, \beta)$-jumbled graph contains a triangle. It is also known that this is sharp, since an example of Alon \cite{A94} gives a triangle-free graph with $p = \Omega(n^{-1/3})$ which is optimally jumbled, so that $\beta = O(\sqrt{p n}) = O(p^2 n)$.

As in the previous section, one can ask for conditions on $p$ and $\beta$ which guarantee that a $(p, \beta)$-jumbled graph satisfies certain combinatorial properties. For the property of being $(K_3, \e)$-Tur\'an, this question was addressed by Sudakov, Szab\'o and Vu \cite{SSV05} (see also \cite{C05}), who showed that it was enough that $\beta \leq c p^2 n$ for an appropriate $c$. This is clearly sharp, since for larger values of $\beta$ we cannot even guarantee that the graph contains a triangle. More generally, they proved the following theorem.\footnote{Their results were only stated for the special class of $(p, \beta)$-jumbled graphs known as $(n,d,\lambda)$-graphs. These are graphs on $n$ vertices which are $d$-regular and such that all eigenvalues of the adjacency matrix, save the largest, have absolute value at most $\lambda$. The expander mixing lemma implies that these graphs are $(p, \beta)$-jumbled with $p = \frac{d}{n}$ and $\beta = \lambda$. However, it is not hard to verify that their method applies in the more general case.}

\begin{theorem} \label{thm:pseudoTuran}
For any natural number $t \geq 3$ and any $\epsilon > 0$, there exists $c > 0$ such that if $\beta \leq c p^{t-1} n$ then any $(p, \beta)$-jumbled graph is $(K_t, \epsilon)$-Tur\'an.
\end{theorem}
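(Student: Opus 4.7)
I plan to prove Theorem \ref{thm:pseudoTuran} by induction on $t$. The base case $t=3$ is the triangle theorem, which is already discussed in the surrounding text: under $\beta \leq cp^2n$, any subgraph $G'$ of $G$ with at least $(\tfrac{1}{2}+\epsilon)e(G)$ edges contains a triangle, shown by a direct jumbledness-based count of triangles via codegrees.

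For the inductive step, let $c_{t-1}(\eta)$ be the constant guaranteed by the theorem for $K_{t-1}$, and choose $c = c_t(\epsilon)$ small enough in terms of $c_{t-1}(\epsilon/2)$ and $\epsilon$. Let $G$ be $(p,\beta)$-jumbled with $\beta \leq cp^{t-1}n$, and let $G' \subseteq G$ have at least $(1 - \tfrac{1}{t-1}+\epsilon)e(G)$ edges. The goal is to find a single vertex $v$ such that (i) $|N_G(v)| = pn(1 \pm o(1))$; (ii) the induced graph $G[N_G(v)]$ is $(p,\beta)$-jumbled with parameters satisfying the inductive hypothesis for $K_{t-1}$, which reduces to $\beta \leq c_{t-1}(\epsilon/2)\, p^{t-2}|N_G(v)|$ and is automatic since $\beta \leq cp^{t-1}n \approx cp^{t-2}|N_G(v)|$; and (iii) the number of $G'$-edges inside $N_G(v)$ is at least $(1 - \tfrac{1}{t-2}+\tfrac{\epsilon}{2})\, p\binom{|N_G(v)|}{2}$. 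Given such a $v$, the inductive hypothesis applied inside $N_G(v)$ produces a copy of $K_{t-1}$ in $G'[N_G(v)]$, which together with $v$ forms a $K_t$ in $G'$.

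Conditions (i) and (ii) hold for all but a small fraction of $v$ by standard consequences of jumbledness. The heart of the proof is (iii), which I would establish by an averaging argument. Starting from the identity
$$\sum_{v \in V(G)} e_{G'}(N_G(v)) = \sum_{uw \in E(G')} |N_G(u) \cap N_G(w)|,$$
and using jumbledness to show $|N_G(u) \cap N_G(w)|$ concentrates around $p^2n$ for typical pairs, the right-hand side is approximately $|E(G')|\, p^2 n$. Hence the average of $e_{G'}(N_G(v))$ over $v$ is roughly $(1 - \tfrac{1}{t-1}+\epsilon)\, p^3 n^2/2$, which exceeds the inductive threshold $(1 - \tfrac{1}{t-2}+\tfrac{\epsilon}{2})\, p^3 n^2/2$ by a positive gap of at least $\bigl(\tfrac{1}{(t-1)(t-2)}+\tfrac{\epsilon}{2}\bigr) p^3 n^2/2$. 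A positive proportion of typical vertices $v$ therefore satisfy (iii).

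The main obstacle is the codegree control: one must show that the total deviation of $\sum_{uw \in E(G')}|N_G(u)\cap N_G(w)|$ from $|E(G')|\, p^2 n$ is much smaller than $\epsilon\, p^3 n^2 \cdot |E(G')|/e(G)$. This is established via a second-moment (i.e., $C_4$-counting) estimate for codegrees in a $(p,\beta)$-jumbled graph, and this is precisely where the hypothesis $\beta \leq cp^{t-1}n$ enters: it makes the jumbledness error terms smaller than the inductive gap, so that a weaker jumbledness (say $\beta \leq cp^{t-2}n$) would not suffice.
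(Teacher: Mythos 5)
A preliminary remark: the survey does not prove Theorem~\ref{thm:pseudoTuran}; it quotes it from Sudakov, Szab\'o and Vu \cite{SSV05}. Your outline --- induction on $t$, descending into the $G$-neighborhood of a well-chosen vertex, with the density threshold improving from $1-\frac{1}{t-1}+\e$ to $1-\frac{1}{t-2}+\frac{\e}{2}$ and the exponent $t-1$ forced by the fact that each descent shrinks the ambient vertex set by a factor of roughly $p$ --- is indeed the strategy of \cite{SSV05}, and your steps (i) and (ii) are sound: jumbledness is inherited by induced subgraphs, and $\beta \leq c p^{t-1} n \approx c p^{t-2}|N_G(v)|$ is exactly the inductive hypothesis for $K_{t-1}$ on $|N_G(v)| \approx pn$ vertices. (You should, however, let the induction bottom out at the trivial case $t=2$ rather than asserting $t=3$ as known, since the surrounding text states but does not prove it.)

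The genuine gap is in your justification of (iii). Even in the ideal $(n,d,\lambda)$ setting the second-moment/$C_4$ estimate gives $\sum_{u,w}\bigl(|N_G(u)\cap N_G(w)|-p^2n\bigr)^2 \leq n\beta^4$, whence Cauchy--Schwarz yields
\[
\Bigl|\sum_{uw\in E(G')}\bigl(|N_G(u)\cap N_G(w)|-p^2n\bigr)\Bigr| \leq \beta^2\sqrt{2e(G')\,n} = O\bigl(\beta^2 p^{1/2}n^{3/2}\bigr).
\]
You need this to be $\ll \e p^3 n^3$, which under $\beta \leq cp^{t-1}n \leq cp^2n$ amounts to $p^{3/2}n^{1/2} \ll \e/c^2$, i.e.\ $p \lesssim n^{-1/3}$. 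For denser $p$ the estimate is vacuous, and this is precisely the regime where the theorem does not already follow from Tur\'an's theorem, so the heart of your argument fails there. The repair is to avoid the global second moment entirely: writing the codegree sum vertex by vertex as $\sum_{w\in N_{G'}(u)}|N_G(u)\cap N_G(w)| = e_G\bigl(N_{G'}(u),N_G(u)\bigr)$ and applying the jumbledness inequality to each pair $\bigl(N_{G'}(u),N_G(u)\bigr)$ of sets of size $O(pn)$, the total error is at most $\sum_u \beta\sqrt{|N_{G'}(u)||N_G(u)|} \leq \beta\sum_u d_G(u) = 2\beta e(G) = O(cp^t n^3) = O(cp^3n^3)$, which is $\ll \e p^3n^3$ once $c \ll \e$; the main term is $p\sum_u d_{G'}(u)d_G(u) \geq (1-o(1))\,p^2n\cdot 2e(G')$ after discarding the (controllably few) vertices of atypical $G$-degree. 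Relatedly, your closing claim that the exponent $t-1$ ``enters'' through the second-moment estimate is off: a single averaging step only requires $\beta \ll \e p^2 n$, and the full exponent comes from the nesting of neighborhoods, as your own step (ii) correctly indicates.
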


Except in the case of triangles, there are no known constructions which demonstrate that this theorem is tight. However, it is conjectured \cite{SSV05} that the bound on $\beta$ in Theorem \ref{thm:pseudoTuran} is the correct condition for finding copies of $K_t$ in a $(p, \beta)$-jumbled graph. This would in turn imply that Theorem~\ref{thm:pseudoTuran} is tight.

For the triangle removal lemma, the following pseudorandom analogue was recently proved by Kohayakawa, R\"odl, Schacht and Skokan \cite{KRSS10}.

\begin{theorem} \label{thm:KRSS}
For any $\e > 0$, there exist positive constants $\delta$ and $c$ such that if $\beta \leq c p^3 n$ then any $(p, \beta)$-jumbled graph $G$ on $n$ vertices has the following property. Any subgraph of $G$ containing at most $\delta p^3 n^3$ triangles may be made triangle-free by removing at most $\e p n^2$ edges.
\end{theorem}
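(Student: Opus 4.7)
The approach parallels the standard proof of the triangle removal lemma, but replaces Szemer\'edi's regularity lemma with the Kohayakawa-R\"odl sparse regularity lemma, and replaces the dense counting lemma with a sparse counting lemma suitable for jumbled graphs. The plan is to partition the vertex set, throw away edges lying in ``bad'' pairs of parts (those that are irregular or of low relative density), and then argue that any surviving triangle forces many triangles in the original subgraph via a jumbledness-based counting lemma, contradicting the hypothesis.

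More concretely, let $H$ denote the spanning subgraph of $G$ in the statement and let $\epsilon'>0$ be a small parameter to be chosen in terms of $\epsilon$. I would first apply the sparse regularity lemma to $H$ at scale $p$: since the jumbledness bound $\beta \leq c p^3 n \ll pn$ easily implies the $D$-boundedness hypothesis for some absolute constant $D$, we obtain an equitable partition $V = V_1 \cup \cdots \cup V_k$ into $k = k(\epsilon')$ parts such that all but $\epsilon' k^2$ pairs $(V_i, V_j)$ are $(\epsilon', p)$-regular in $H$, meaning $|d_H(S,T) - d_H(V_i, V_j)| \leq \epsilon' p$ for all $S \subseteq V_i$, $T \subseteq V_j$ with $|S| \geq \epsilon'|V_i|$, $|T|\geq \epsilon'|V_j|$. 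Next form $H'$ from $H$ by deleting every edge in an irregular pair or in a pair with $d_H(V_i, V_j) \leq (\epsilon/4) p$. The jumbledness of $G$ bounds $e_G(V_i, V_j) \leq p|V_i||V_j| + \beta(n/k) = O(p n^2/k^2)$, so the total number of deleted edges is at most $\epsilon' p n^2 \cdot O(1) + (\epsilon/4) p n^2 \leq \epsilon p n^2$ for $\epsilon'$ sufficiently small.

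If $H'$ still contains a triangle, then by deletion of trivial intra-part and low-multiplicity contributions there exist three parts $V_i, V_j, V_k$ (not necessarily distinct, but for clarity assume distinct) such that each of the three pairs between them is $(\epsilon', p)$-regular in $H$ with relative density at least $\epsilon/4$. Here we invoke the sparse triangle counting lemma in jumbled graphs: under the hypothesis $\beta \leq c p^3 n$ for a sufficiently small absolute $c$, any such regular triple contains at least $\tfrac12 (\epsilon/4)^3 p^3 |V_i||V_j||V_k|$ triangles in $H$. Summing gives at least $c' \epsilon^3 p^3 n^3$ triangles, which contradicts the hypothesis once $\delta < c' \epsilon^3$.

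The main obstacle, as one should expect, is the sparse counting lemma for jumbled triples. In the dense case one counts triangles by a simple greedy embedding using only pairwise regularity, but in the sparse setting regularity alone does not control triangle counts: one needs the ambient graph $G$ to distribute edges uniformly enough that typical neighborhoods $N_G(v) \cap V_j$ and $N_G(v) \cap V_k$ have size $(1 \pm o(1)) p |V_j|$ and $(1 \pm o(1)) p |V_k|$ respectively, and moreover that the bipartite $H$-graph between these neighborhoods inherits enough regularity to support a second-moment count. The condition $\beta \leq c p^3 n$ is sharp for this: the relevant error terms scale like $\beta/(p^2 n)$ and $\beta/(p^{5/2} n)$ after two applications of the jumbledness inequality, and both must be $o(1)$, which forces $\beta = o(p^{5/2} n)$ at least, with the full $p^3 n$ threshold coming from the third application needed to close the triangle. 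Making this argument precise, for instance by the densification-and-expansion strategy pioneered in the triangle case, is the technical heart of the proof.
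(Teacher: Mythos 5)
Your outline follows exactly the strategy of Kohayakawa, R\"odl, Schacht and Skokan, whose paper is what this survey cites for the statement (the survey itself gives no proof): apply the sparse regularity lemma to the subgraph $H$ (which is upper-uniform because it sits inside a jumbled graph), clean out edges in irregular pairs, sparse pairs and within parts at a total cost of at most $\e p n^2$, and then argue that a surviving triangle forces a regular triple of relative density at least $\e/4$, which by a counting lemma yields $\Omega(\e^3 p^3 n^3)$ triangles, contradicting the hypothesis for small $\delta$. The reduction half of this is correct and routine, and your bookkeeping for the deleted edges is fine.

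The genuine gap is the one you name yourself in the last paragraph: the sparse triangle counting lemma for $(\e',p)$-regular triples inside a $(p,\beta)$-jumbled host with $\beta \leq c p^3 n$ is asserted, not proved, and it is the entire content of the theorem. Everything before it would go through verbatim with the weaker hypothesis $\beta \leq c p^2 n$ (or indeed with no jumbledness at all beyond upper-uniformity), and it is well known that sparse regularity alone does \emph{not} imply a triangle count --- there are regular triples of positive relative density in sparse graphs containing no triangles whatsoever, which is exactly why the K\L R conjecture was a conjecture for so long. So the statement you ``invoke'' cannot be treated as a black box analogous to Lemma~\ref{lem:AFKSusualcount}; it is a theorem whose proof (densification of the pair $(N_G(v)\cap V_j, N_G(v)\cap V_k)$, control of the inherited regularity via two further applications of jumbledness, and a second-moment argument) is precisely where the exponent $3$ in $\beta \leq cp^3 n$ is consumed. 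Your heuristic accounting of the error terms $\beta/(p^2 n)$ and $\beta/(p^{5/2}n)$ gestures at the right mechanism but is not a proof, and without it the argument reduces the theorem to an unproved lemma that is strictly harder than the routine part you have written out.
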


The condition on $\beta$ in this theorem is stronger than that employed for triangles in Theorem~\ref{thm:pseudoTuran}. As a result, Alon's construction does not apply and it is an open problem to determine whether the condition $\beta \leq c p^3 n$ is optimal or if it can be improved to $\beta \leq c p^2 n$. Kohayakawa, R\"odl, Schacht and Skokan conjecture the latter, though we feel that the former is a genuine possibility.

In a recent paper, Conlon, Fox and Zhao \cite{CFZ12} found a way to prove a counting lemma for embedding any fixed small graph into a regular subgraph of a sufficiently pseudorandom host graph. Like the K\L R conjecture for random graphs, this serves to complement the sparse regularity lemma of Kohayakawa \cite{K97} and R\"odl in the pseudorandom context. As corollaries, they extended Theorems~\ref{thm:pseudoTuran} and \ref{thm:KRSS} to all graphs and proved sparse pseudorandom extensions of several other theorems, including Ramsey's theorem and the Erd\H{o}s-Simonovits stability theorem.

To state these theorems, we define the {\it degeneracy} $d(H)$ of a graph $H$ to be the smallest nonnegative integer $d$ for which there exists an ordering of the vertices of $H$ such that each vertex has at
most $d$ neighbors which appear earlier in the ordering. Equivalently, it may be defined as $d(H) = \max\{\delta(H'): H' \subseteq H\}$, where
$\delta(H)$ is the minimum degree of $H$.\footnote{In \cite{CFZ12}, a slightly different parameter, the $2$-degeneracy $d_2(H)$, is used. Though there are many cases in which this parameter is more appropriate, the degeneracy will be sufficient for the purposes of our discussion here.} 


The pseudorandom analogue of the graph removal lemma proved in \cite{CFZ12} is now as follows.\footnote{For other properties, such as that of being $(H,r)$-Ramsey or that of being $(H, \e)$-Tur\'an, an exactly analogous theorem holds with the same condition $\beta \leq c p^{d(H) + \frac{5}{2}} n$. Any of the improvements subsequently discussed for specific graphs $H$ also apply for these properties.}

\begin{theorem} \label{thm:pseudoremoval}
For any graph $H$ and any $\e > 0$, there exist positive constants $\delta$ and $c$ such that if $\beta \leq c p^{d(H) + \frac{5}{2}}n$ then any $(p, \beta)$-jumbled graph $G$ on $n$ vertices has the following property. Any subgraph of $G$ containing at most $\delta p^{e(H)} n^{v(H)}$ copies of  $H$ may be made $H$-free by removing at most $\e p n^2$ edges.
\end{theorem}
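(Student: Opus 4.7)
The plan is to follow the standard regularity-based proof of the graph removal lemma, but carried out inside the pseudorandom host $G$ and using sparse analogues of each ingredient. Given a subgraph $G' \subseteq G$ with at most $\delta p^{e(H)} n^{v(H)}$ copies of $H$, the first step is to apply the sparse regularity lemma of Kohayakawa and R\"odl to $G'$ (normalized by the ambient density $p$), obtaining an equitable partition $V(G)=V_1\cup\cdots\cup V_k$ which is $\epsilon'$-regular at scale $p$. Because $G$ is $(p,\beta)$-jumbled, the sparse regularity lemma applies cleanly and the number of parts $k$ depends only on $\epsilon'$.

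Next I would perform the usual cleaning step: delete from $G'$ all edges lying inside a single part, all edges between pairs $(V_i,V_j)$ that are not $\epsilon'$-regular in the sparse sense, and all edges between pairs of relative density less than a small threshold $\eta$ (measured against $p$). Jumbledness of $G$ lets us upper bound $e_G(V_i,V_j)$ by roughly $p|V_i||V_j|$, so the total number of deleted edges is at most $\epsilon p n^2$, provided $\epsilon'$ and $\eta$ are chosen suitably small in terms of $\epsilon$ and $k$. Call the resulting graph $G''$.

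If $G''$ still contains a copy of $H$, then there exists a map $\phi:V(H)\to\{1,\dots,k\}$ such that for every edge $uv$ of $H$, the pair $(V_{\phi(u)},V_{\phi(v)})$ is $\epsilon'$-regular at scale $p$ and has relative density at least $\eta$. At this point the essential tool is the pseudorandom counting lemma of Conlon, Fox and Zhao~\cite{CFZ12}: under the hypothesis $\beta \leq c p^{d(H)+5/2}n$, any such configuration of $v(H)$ parts sitting inside a $(p,\beta)$-jumbled host $G$ contains at least
\[
\Omega\!\left((\eta p)^{e(H)} \prod_{i\in V(H)} |V_{\phi(i)}|\right)
\]
copies of $H$, which is at least a constant (depending on $\eta$, $\epsilon'$, $k$ and $H$) times $p^{e(H)} n^{v(H)}$. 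Choosing $\delta$ sufficiently small relative to these quantities, this contradicts the hypothesis that $G'$ contains at most $\delta p^{e(H)}n^{v(H)}$ copies of $H$. Hence $G''$ is $H$-free, as desired.

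The main obstacle, and the genuinely new ingredient imported from~\cite{CFZ12}, is the counting lemma used in the final step. In the dense setting the counting lemma (Lemma~\ref{lem:AFKSusualcount}) is proved by a one-line greedy embedding that uses only pairwise regularity. In the sparse pseudorandom setting every embedding step consumes some of the jumbledness, and the exponent of $p$ in the required jumbledness condition must accommodate all the conditioning incurred by previously embedded vertices. The degeneracy $d(H)$ enters because one embeds along a degeneracy ordering, so that each newly placed vertex has at most $d(H)$ already-embedded neighbors to account for; the exponent $d(H)+\tfrac{5}{2}$ is what comes out of balancing the jumbledness error against the $p$-factor lost at each backward neighbor, plus the overhead needed to show that a positive density of valid extensions survives at every step. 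Making this rigorous---propagating good sets of candidate images through all $v(H)$ embedding stages with errors that remain negligible against the main term---is the technical heart of the argument.
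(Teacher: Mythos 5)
Your outline is correct and is precisely the route the paper itself indicates: the theorem is deduced by running the dense removal argument of Section~\ref{sec:usualremoval} with the sparse regularity lemma of Kohayakawa \cite{K97} and R\"odl in place of Szemer\'edi's lemma and the pseudorandom counting lemma of Conlon, Fox and Zhao \cite{CFZ12} in place of Lemma~\ref{lem:AFKSusualcount}. As you rightly observe, all of the genuine difficulty is concentrated in that counting lemma (where the degeneracy ordering produces the exponent $d(H)+\frac{5}{2}$), which both you and this survey treat as a black box imported from \cite{CFZ12}.
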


It is not hard to show, by using the random graph, that there are $(p, \beta)$-jumbled graphs with $\beta = O(p^{(d(H) + 2)/4} n)$ which contain no copies of $H$. We therefore see that the exponent of $p$ is sharp up to a multiplicative constant. However, in many cases, we expect it to be sharp up to an additive constant. 

For certain classes of graph, Theorem~\ref{thm:pseudoremoval} can be improved. For example, if we know that the degeneracy of the graph is the same as the maximum degree, such as what happens for the complete graph $K_t$, it is sufficient that $\beta \leq c p^{d(H) + 1} n$. In particular, for $K_3$, we reprove Theorem~\ref{thm:KRSS}. For cycles, the improvement is even more pronounced, since $\beta \leq c p^{t_{\ell}} n$, where $t_3 = 3$, $t_4 = 2$, $t_\ell = 1 + \frac{1}{\ell - 3}$ if $\ell \geq 5$ is odd and  $t_\ell = 1 + \frac{1}{\ell - 4}$ if $\ell \geq 6$ is even, is sufficient for removing the cycle $C_{\ell}$.

By following the proof of Kr\'al', Serra and Vena \cite{KSV09}, these bounds on the cycle removal lemma in pseudorandom graphs\footnote{Rather, a colored or directed version of this theorem.} allow us to prove an analogue of Theorem~\ref{thm:oneeqnremoval} for pseudorandom subsets of any group $G$. The {\it Cayley graph} $G(S)$ of a subset $S$ of a group
$G$ has vertex set $G$ and $(x,y)$ is an edge of $G$ if $x^{-1} y \in
S$. We say that a subset $S$ of a group $G$ is $(p,\beta)$-jumbled if
the Cayley graph $G(S)$ is $(p,\beta)$-jumbled. When $G$ is abelian, if
$\abs{\sum_{x \in S} \chi(x)} \leq \beta$ for all nontrivial
characters $\chi \colon G \to \CC$, then $S$ is $(\frac{\abs{S}}{\abs{G}},\beta)$-jumbled
(see \cite[Lemma 16]{KRSS10}). 

\begin{theorem} \label{thm:removal-groups} For any natural number $k \geq 3$ and any $\epsilon>0$, there exist positive constants $\delta$ and $c$ such that the following
  holds. Suppose $B_1,\ldots,B_k$ are subsets of a group $G$ of order $n$ such that
  each $B_i$ is $(p,\beta)$-jumbled with $\beta \leq cp^{t_k}n$. If
  subsets $A_i \subseteq B_i$ for $i=1,\ldots,k$ are such that there
  are at most $\delta |B_1|\cdots|B_k|/n$ solutions to the equation
  $x_1x_2 \cdots x_k=1$ with $x_i \in A_i$ for all $i$, then it is
  possible to remove at most $\epsilon |B_i|$ elements from each set
  $A_i$ so as to obtain sets $A_i'$ for which there are no solutions
  to $x_1x_2 \cdots x_k=1$ with $x_i \in A'_i$ for all $i$.
\end{theorem}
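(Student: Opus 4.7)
The plan is to follow the reduction of Kr\'al', Serra and Vena \cite{KSV09} that was used to derive Theorem~\ref{thm:oneeqnremoval} from Theorem~\ref{thm:directedremoval} in Section~\ref{sec:arithmeticremoval}, but with the dense directed cycle removal lemma replaced by a colored/directed pseudorandom variant of Theorem~\ref{thm:pseudoremoval} applied to $H=C_k$. This is the reason the exponent $t_k$ appears in the jumbledness hypothesis, matching the bound for cycles recorded just after Theorem~\ref{thm:pseudoremoval}.

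First I form the auxiliary directed $k$-partite graph $\Gamma$ on vertex set $V_1 \cup \cdots \cup V_k$, where each $V_i$ is a labelled copy of $G$, placing a directed edge from $(x,i)$ to $(xb,i+1)$ (indices mod $k$) whenever $b \in B_i$. Since $B_i$ is $(p,\beta)$-jumbled, for any $X \subseteq V_i$ and $Y \subseteq V_{i+1}$, the number of edges of $\Gamma$ in $X \times Y$ equals $|\{(x,y)\in X\times Y : x^{-1}y\in B_i\}|$, which is precisely the corresponding edge count in the Cayley graph of $B_i$. Hence each bipartite slice of $\Gamma$ is $(p,\beta)$-jumbled, and together the slices make $\Gamma$, viewed as a graph on $kn$ vertices, jumbled with the parameters needed for $C_k$-removal. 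Now let $\Gamma' \subseteq \Gamma$ be the subgraph whose layer-$i$ edges use only $a_i \in A_i$. A direct computation shows that every solution $a_1a_2\cdots a_k = 1$ with $a_i \in A_i$ produces exactly $n$ directed $k$-cycles in $\Gamma'$ (one per choice of starting vertex in $V_1$), and every directed $k$-cycle in $\Gamma'$ arises this way. Hence $\Gamma'$ has at most $n\cdot \delta|B_1|\cdots|B_k|/n = \delta|B_1|\cdots|B_k|$ directed $k$-cycles, which for $|B_i| = (1+o(1))pn$ is bounded by a constant multiple of $\delta\, p^k n^k$. Taking $\delta$ small enough, the colored/directed analogue of Theorem~\ref{thm:pseudoremoval} for $C_k$ now yields a set $F$ of at most $\epsilon' p n^2$ edges whose removal kills every directed $k$-cycle of $\Gamma'$. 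For each $i$ and $a_i \in A_i$ the fibre $E_i(a_i) = \{((x,i),(xa_i,i+1)) : x \in G\}$ has exactly $n$ edges, and I delete $a_i$ from $A_i$ precisely when at least $n/k$ edges of $E_i(a_i)$ lie in $F$. Double-counting gives at most $k|F|/n \leq k\epsilon' p n \leq \epsilon|B_i|$ deletions per $A_i$ for $\epsilon'$ chosen appropriately, while if a solution $a_1'\cdots a_k' = 1$ with $a_i' \in A_i'$ survived, each of its $n$ associated $k$-cycles would need a deleted edge, forcing some fibre $E_i(a_i')$ to contribute $\geq n/k$ edges to $F$, contradicting $a_i' \in A_i'$.

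The main technical obstacle is the jumbledness bookkeeping: one has to pass from the coordinate-wise $(p,\beta)$-jumbledness of each $B_i$ (equivalently, of each pair of consecutive layers in $\Gamma$) to a joint jumbledness statement for $\Gamma$ that is strong enough to feed the $C_k$ counting lemma of \cite{CFZ12}, and to do so without losing more than constant factors in the exponent of $p$. This is precisely the step that forces the hypothesis $\beta \leq c p^{t_k} n$, since $t_k$ is exactly the exponent at which Theorem~\ref{thm:pseudoremoval} and its accompanying counting lemma kick in for the cycle $C_k$. A secondary issue is that Theorem~\ref{thm:pseudoremoval} is stated for undirected graphs, so one actually invokes a directed (or edge-coloured) version; fortunately, the counting lemma of \cite{CFZ12} does not distinguish between cyclic orientations and extends to this setting essentially verbatim, so once the jumbledness transfer is established the rest of the argument is the routine Kr\'al'--Serra--Vena translation outlined above.
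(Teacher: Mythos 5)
Your proposal is correct and is essentially the argument the paper has in mind: Theorem~\ref{thm:removal-groups} is presented there as following from the colored/directed pseudorandom cycle removal lemma of \cite{CFZ12} via exactly the Kr\'al'--Serra--Vena layered Cayley-graph reduction you carry out, with the host graph built from the $B_i$ and the subgraph from the $A_i$. Your identification of the jumbledness transfer between consecutive layers (and the need for a directed/colored version of Theorem~\ref{thm:pseudoremoval}) as the only nonroutine step matches the paper's own footnoted caveat.
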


This result easily implies a Roth-type theorem in quite sparse pseudorandom subsets of a group. We say that a subset $B$ of a group $G$ is {\it $(\epsilon,k)$-Roth} if, for all integers $a_1,\ldots,a_k$ which satisfy $a_1+\cdots+a_k = 0$ and $\gcd(a_i,|G|)=1$ for $1 \leq i \leq k$, every subset $A \subseteq B$ which has no nontrivial solution to $x_1^{a_1}x_2^{a_2}\cdots x_k^{a_k}=1$ has $|A| \leq \epsilon |B|$.

\begin{corollary}\label{rothtype} For any natural number $k \geq 3$ and any $\epsilon>0$, there exists $c>0$ such that the following holds. If $G$ is a group of order $n$ and $B$ is a $(p,\beta)$-jumbled subset of $G$ with $\beta \leq cp^{t_k}n$, then $B$ is $(\epsilon,k)$-Roth.
\end{corollary}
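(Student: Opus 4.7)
The plan is to reduce Corollary~\ref{rothtype} to Theorem~\ref{thm:removal-groups} via the change of variables $y_i = x_i^{a_i}$, which converts the equation $x_1^{a_1} x_2^{a_2} \cdots x_k^{a_k} = 1$ into the standard form $y_1 y_2 \cdots y_k = 1$. Given $\epsilon > 0$, I would apply Theorem~\ref{thm:removal-groups} with parameter $\epsilon' := \epsilon/(2k)$ to extract constants $\delta' > 0$ and $c > 0$, and take this $c$ as the constant required by the corollary.

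Fix integers $a_1, \ldots, a_k$ with $\sum a_i = 0$ and $\gcd(a_i, n) = 1$. Since $g^n = 1$ for every $g \in G$, each map $\sigma_i(x) = x^{a_i}$ is a bijection on $G$, with inverse the $a_i'$-th power map where $a_i a_i' \equiv 1 \pmod n$. Assuming $G$ is abelian, each $\sigma_i$ is a group automorphism, so the Cayley graph of $B_i := \sigma_i(B)$ is isomorphic to that of $B$ via $\sigma_i$; in particular, $B_i$ is also $(p, \beta)$-jumbled with $\beta \leq c p^{t_k} n$, and $B_1, \ldots, B_k$ satisfy the jumbledness hypothesis of Theorem~\ref{thm:removal-groups}.

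Suppose for contradiction that $A \subseteq B$ has no nontrivial solution to $x_1^{a_1} \cdots x_k^{a_k} = 1$ yet satisfies $|A| > \epsilon |B|$. Set $A_i := \sigma_i(A) \subseteq B_i$; since each $\sigma_i$ is a bijection, the substitution $y_i = \sigma_i(x_i)$ provides a bijection between solutions $(x_1, \ldots, x_k) \in A^k$ of the original equation and solutions $(y_1, \ldots, y_k) \in A_1 \times \cdots \times A_k$ of $y_1 \cdots y_k = 1$. By hypothesis the only such solutions are the $|A|$ trivial ones with $x_1 = \cdots = x_k$, so there are exactly $|A| \leq |B|$ solutions in $A_1 \times \cdots \times A_k$. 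Provided $n$ is large enough that $|B| \leq \delta' |B|^k / n$, Theorem~\ref{thm:removal-groups} produces $A_i' \subseteq A_i$ with $|A_i \setminus A_i'| \leq \epsilon' |B|$ such that no solutions survive in $A_1' \times \cdots \times A_k'$. But then $A' := \{x \in A : \sigma_i(x) \in A_i' \text{ for all } i\}$ satisfies, by a union bound, $|A \setminus A'| \leq \sum_i |A_i \setminus A_i'| \leq k \epsilon' |B| = \epsilon |B|/2 < |A|$, so $A'$ is nonempty; any $x \in A'$ yields a surviving trivial solution $(\sigma_1(x), \ldots, \sigma_k(x)) \in A_1' \times \cdots \times A_k'$, the desired contradiction.

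The main obstacle is the non-abelian case: when $G$ is non-abelian, the power map $\sigma_i$ need not be a homomorphism, so it is not automatic that $B_i = \{b^{a_i} : b \in B\}$ inherits the jumbledness of $B$, and the clean reduction above breaks down. A secondary technical check is the size requirement $|B|^{k-1} \geq n/\delta'$ needed to invoke Theorem~\ref{thm:removal-groups}, which should follow from the standard lower bound $\beta = \Omega(\sqrt{p(1-p)n})$ for any $(p, \beta)$-jumbled set combined with the hypothesis $\beta \leq c p^{t_k} n$, forcing $p$ to be bounded below by a negative power of $n$ and thus $|B|$ to be polynomially large in $n$.
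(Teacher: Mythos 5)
Your reduction is the intended one: the survey offers no written proof of Corollary~\ref{rothtype}, only the assertion that it follows easily from Theorem~\ref{thm:removal-groups}, and the substitution $y_i=x_i^{a_i}$ with $B_i=\{b^{a_i}:b\in B\}$, $A_i=\{x^{a_i}:x\in A\}$ is exactly how that implication is meant to go. Your execution is correct: the $|A|$ trivial solutions are the only ones, so the solution count is at most $|B|=pn\le \delta' p^k n^{k-1}$ once $\delta' p^{k-1}n^{k-2}\ge 1$, which your final remark justifies (the lower bound $\beta=\Omega(\sqrt{pn})$ forces $p\ge Cn^{-1/(2t_k-1)}$, and since $2t_k-1>(k-1)/(k-2)$ the exponent of $n$ is positive; shrinking $c$ then makes the hypothesis vacuous for the remaining small $n$). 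The union-bound step over the $k$ coordinates of each trivial solution is also the right way to finish. In the abelian case your observation that $x\mapsto x^{a_i}$ is an automorphism, hence a Cayley-graph isomorphism $G(B)\cong G(B_i)$ preserving $(p,\beta)$-jumbledness, is the cleanest way to transfer the hypothesis; it also meshes with the character-sum criterion quoted just before Theorem~\ref{thm:removal-groups}, since $\chi\mapsto\chi^{a_i}$ permutes the nontrivial characters when $\gcd(a_i,n)=1$.

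The non-abelian point you raise is a genuine one and not something you are missing: for non-abelian $G$ the power map is neither a homomorphism nor a Cayley-graph isomorphism, and jumbledness of $B$ does not formally yield jumbledness of each $B^{a_i}$, which is what any reduction to Theorem~\ref{thm:removal-groups} (or to the directed-cycle removal lemma behind it) actually consumes. The same feature is present in the source of this corollary \cite{CFZ12}; the honest reading is that the corollary is proved as stated in the abelian setting (which covers the advertised application to Roth's theorem with $B=G=\mathbb{Z}_n$), while in general one should either add the hypothesis that each $B^{a_i}$ is $(p,\beta)$-jumbled or restrict to automorphisms in place of power maps. So: your proof is the paper's proof, written out, with the one caveat correctly identified rather than glossed over.
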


Note that Roth's theorem on $3$-term arithmetic progressions in dense sets of integers follows from the special case of this result with $B=G=\mathbb{Z}_n$, $k=3$ and $a_1=a_2=1$, $a_3=-2$. The rather weak pseudorandomness condition in Corollary \ref{rothtype} shows that even quite sparse pseudorandom subsets of a group have the Roth property. 




\section{Further topics} \label{sec:topics}

\subsection{The Erd\H{o}s-Rothschild problem}

A problem of Erd\H{o}s and Rothschild \cite{E87} asks one to estimate the maximum number $h(n, c)$ such that every $n$-vertex graph with at least $c n^2$ edges, each of which is contained in at least one triangle, must contain an edge that is in at least $h(n, c)$ edges. Here, and throughout this subsection, we assume $c>0$ is a fixed absolute constant. The fact that $h(n, c)$ tends to infinity already follows from the triangle removal lemma.\footnote{Even the statement that $h(n, c) > 1$ is already enough to imply Roth's theorem.} 

To see this, suppose that $G$ is an $n$-vertex graph with $c n^2$ edges such that every edge is in at least one and at most $h := h(n,c)$ triangles. The total number of triangles in $G$ is at most $h c n^2/3$. Therefore, if $h$ does not tend to infinity, the triangle removal lemma tells us that there is a collection $E$ of $o(n^2)$ edges such that every triangle contains at least one of them. Since each edge in $G$ is in at least one triangle, we know that there are at least $c n^2/3$ triangles. It follows that some edge in $E$ is contained in at least $\omega(1)$ edges.

Using Fox's bound \cite{F11} for the triangle removal lemma, this implies that $h(n,c) \geq e^{a \log^* n}$, where $\log^* n$ is the iterated logarithm. This is defined by $\log^* x = 0$ if $x \leq 1$ and $\log^* x = \log^* (\log x) + 1$ otherwise. This improves on the bound $h(n, c) \geq (\log^* n)^a$ which follows from Ruzsa and Szemer\'edi's original proof of the triangle removal lemma.

On the other hand, Alon and Trotter (see \cite{E92}) showed that for any positive $c < \frac{1}{4}$ there is $c' > 0$ such that $h(n,c) < c' \sqrt{n}$. The condition $c < \frac{1}{4}$ is easily seen to be best possible since any $n$-vertex graph with more than $n^2/4$ edges contains an edge in at least $n/6$ triangles \cite{Ed77, KN79}. Erd\H{o}s conjectured that perhaps this behaviour is correct. That is, that for any positive $c < \frac{1}{4}$ there exists $\e > 0$ such that $h(n, c) > n^{\e}$ for all sufficiently large $n$. This was recently disproved by Fox and Loh \cite{FL12} as follows.

\begin{theorem} \label{thm:FoxLoh}
For $n$ sufficiently large, there is an $n$-vertex graph with $\frac{n^2}{4} (1 - e^{-(\log n)^{1/6}})$ edges such that every edge is in a triangle and no edge is in more than $n^{14/\log\log n}$ triangles.
\end{theorem}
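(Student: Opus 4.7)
I would prove this by explicit construction, starting from the complete bipartite graph $K_{n/2,n/2}$ and perturbing it minimally so that every edge lies in a triangle while keeping the triangle count per edge as small as possible. Identify the two parts with copies of $\mathbb{Z}_m$, $m=n/2$, let $S\subset \mathbb{Z}_m$ be a sparse set of density $\rho=e^{-(\log m)^{1/6}}$ of the kind produced by Schoen--Shkredov-type constructions, and take the bipartite edge $((x,0),(y,1))$ whenever $y-x\notin S$, together with intra-part edges $((x,i),(x',i))$ whenever $x-x'\in S$. The edge count is then $m(m-|S|)+O(m|S|)\approx \frac{n^2}{4}(1-\rho)$, matching the target density deficit. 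A pigeonhole argument on the largeness of $\mathbb{Z}_m\setminus S$ shows every bipartite edge lies in a triangle: given $y-x\notin S$, some $s\in S$ satisfies $(y-x)-s\notin S$, producing a triangle through $(x+s,0)$. Every intra-part edge $((x,i),(x+s,i))$ is also in many triangles (too many, as discussed below), so in particular each edge of this graph sits in at least one triangle.

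The central obstacle in this naive construction is that each intra-part edge carries roughly $m-2|S|\approx m$ triangles: the common bipartite neighbors of $x$ and $x+s$ form the set $\{y:y-x\notin S,\ y-x-s\notin S\}$ of size $m-|S\cup(S+s)|=\Theta(m)$. Since $|S|$ must stay small for the edge count to approach $n^2/4$, the inequality $|S\cup(S+s)|\le 2|S|$ forbids us from ever shrinking this figure below $\Theta(m)$ within a single-scale construction. The plan is therefore to go multi-scale: introduce a nested family $S=S_0\supset S_1\supset\cdots\supset S_\ell$ in $\mathbb{Z}_m$ with $\ell\sim\log\log n$ and geometrically decreasing densities, and build the edge set so that an intra-part edge of difference $s\in S_{i-1}\setminus S_i$ is witnessed only by bipartite edges omitted on scale $S_i$. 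Then its triangle count is governed by $|S_{i-1}|/|S_i|$ rather than by $m$.

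Balancing the densities so that $\sum_i\log(|S_{i-1}|/|S_i|)\approx \log n$ across $\log\log n$ scales, the maximum triangle count per edge should come out to $n^{O(1/\log\log n)}$, which matches the claimed bound $n^{14/\log\log n}$. The density deficit at the finest scale is governed by the Schoen--Shkredov threshold for sets avoiding the short linear equation $x_1+\cdots+x_5=5x_6$, which is precisely what produces the exponent $1/6$ in place of Behrend's $1/2$. The covering condition (every edge in a triangle) is inherited scale-by-scale from the bipartite pigeonhole above, applied to the relevant $S_i$.

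The main hurdle in executing this plan is the multi-scale coordination: a priori, triangles through an intra-part edge can accumulate across \emph{all} scales, and one has to design the hierarchy so that each such edge's triangle count is dominated by a single level. This requires additive sets $S_i$ with very finely controlled translation-intersections $|S_i\cap(S_i+s)|$ for $s\in S_{i-1}$, and the verification that such sets exist with the prescribed densities (inherited from Schoen--Shkredov) will be the technical heart of the proof. Once the sets are in place, the three conditions of the theorem reduce to routine additive-energy estimates at each scale.
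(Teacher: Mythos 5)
Your plan has a structural flaw that the multi-scale idea cannot repair. In any graph on two parts of size $m=n/2$ each, an intra-part edge $uv$ in part $0$ lies in at least $|N_1(u)\cap N_1(v)|$ triangles, where $N_1$ denotes the bipartite neighborhood, and by inclusion-exclusion $|N_1(u)\cap N_1(v)|\ge \deg_1(u)+\deg_1(v)-m$. Since the bipartite graph must carry $(1-\rho)m^2$ edges with $\rho=o(1)$, a typical vertex has $\deg_1\approx(1-\rho)m$, and your Cayley-type construction is vertex-transitive, so \emph{every} vertex has exactly this degree; hence every intra-part edge is in at least $(1-2\rho)m=\Theta(n)$ triangles. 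This count is the size of a common neighborhood and is forced by the degrees alone; it does not depend on the fine additive structure of $S$ or of any nested family $S_0\supset S_1\supset\cdots$, so no choice of the $S_i$ and no control of $|S_i\cap(S_i+s)|$ can reduce it. (The phrase ``witnessed only by bipartite edges omitted on scale $S_i$'' has no content here: every common bipartite neighbor of $u$ and $v$ produces a triangle, whether or not you intended it to.) The only way out is to place all intra-part (``helper'') edges so that each has an endpoint of bipartite degree at most $m/2+n^{o(1)}$; the edge-count constraint then forces these low-degree vertices to form a set of size $O(\rho m)$, and at that point you have reconstructed a small third part $C$. That is exactly the shape of the actual construction, due to Fox and Loh, sketched in the survey: a tripartite graph on $A$, $B$, $C$ with $A$, $B$ lattice cubes joined when their distance lies in a thin annulus around the expected distance (giving density $1-o(1)$ by concentration), $C$ a small set joined to $A$ and $B$ at roughly half that distance, so that every $AB$-edge gets a triangle through the midpoint while common neighborhoods in $C$ stay tiny.

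Separately, your sourcing of the exponent $1/6$ from Schoen--Shkredov is not right. The Schoen--Shkredov theorem is a \emph{positive} result (any subset of $[n]$ of density $e^{-c(\log n/\log\log n)^{1/6}}$ contains a solution of $x_1+\cdots+x_5=5x_6$); it constructs nothing, its exponent is $(\log n/\log\log n)^{1/6}$ rather than $(\log n)^{1/6}$, and it plays no role in Theorem~\ref{thm:FoxLoh}. In the Fox--Loh construction the $1/6$ comes out of a Behrend-type optimization of the cube dimension $d$ and sidelength $r$ against the additional constraint that the third part be small and the annuli thin enough to control codegrees; the coincidence of the numeral $1/6$ appears to have misled you. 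If you want to pursue an arithmetic rather than geometric version, the right move is still a genuinely tripartite construction with a small third part, not a refinement of the difference set $S$ inside two large parts.
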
 

To give some idea of the construction, consider a tripartite graph between sets $A$, $B$ and $C$, each of which is a copy of a lattice cube with appropriate sidelength $r$ and dimension $d$. We join points in $A$ and $B$ if their distance is close to the expected distance between random points in $A$ and $B$. By concentration, this implies that the density of edges between $A$ and $B$ is close to $1$. We join points in $C$ to points in $A$ or $B$ if their distance is close to half the expected distance. It is not hard to see that every edge between $A$ and $B$ is then contained in few triangles. At the same time, every edge will be in at least one triangle, as can be seen by considering the midpoint of any two connected points $a$ and $b$. This yields a construction with roughly $\frac{n^2}{9}$ edges but the result of Fox and Loh may be obtained by shrinking the vertex set $C$ (or blowing up $A$ and $B$) in an appropriate fashion.

\subsection{Induced matchings}

Call a graph $G=(V,E)$ an {\it $(r,t)$-Ruzsa-Szemer\'edi graph} ($(r,t)$-RS graph for short) if its edge set can be partitioned into $t$ induced matchings in $G$, each of size $r$. The total number of edges of such a graph is $rt$. 
The most interesting problem concerns the existence of such graphs when $r$ and $t$ are both relatively large as a function of the number of vertices. The construction of Ruzsa and Szemer\'edi \cite{RuSz} using Behrend's construction demonstrates that such a graph on $n$ vertices exists with $r= e^{-c \sqrt{\log n}}n$ and $t=n/3$. The Ruzsa-Szemer\'edi result on the $(6,3)$-problem is equivalent to showing that no $(r,t)$-RS graph on $n$ vertices  exists with $r$ and $t$ linear in $n$. 

For $r$ linear in the number $n$ of vertices, it is still an open problem if there exists an $(r,t)$-RS graph with $t=n^{\epsilon}$. The best known construction in this case, due to Fischer et al. \cite{FNRR}, is an example with $r=n/3$ and $t=n^{c/\log \log n}$. However, for $r=n^{1-o(1)}$, substantial progress was made recently by Alon, Moitra and Sudakov \cite{AMS} by extending ideas used in the construction of Fox and Loh \cite{FL12} discussed in the previous subsection. They give a construction of $n$-vertex graphs with $rt=(1-o(1){n \choose 2}$ and $r=n^{1-o(1)}$. That is, there are nearly complete graphs, with edge density $1-o(1)$, such that its edge set can be partitioned into large induced matchings, each of order $n^{1-o(1)}$. They give several applications of this construction to combinatorics, complexity theory and information theory.   

\subsection{Testing small graphs}

A {\it property} of graphs is a family of graphs closed under isomorphism. A graph $G$ on $n$ vertices is {\it $\epsilon$-far} 
from satisfying a property $P$ if no graph which can be constructed from $G$ by adding and/or removing at most $\epsilon n^2$ edges satisfies $P$. 
An {\it $\epsilon$-tester} for $P$ is a randomized algorithm which, given the quantity
$n$ and the ability to make queries whether a desired pair of vertices spans an edge in $G$, distinguishes with probability at least $2/3$ 
between the case that $G$ satisfies $P$ and the case that $G$ is $\epsilon$-far from satisfying $P$. 
Such an $\epsilon$-tester is a {\it one-sided $\epsilon$-tester} if when $G$ satisfies $P$ the $\epsilon$-tester determines that this is the case. The property $P$ is called {\it testable} if, for every fixed $\epsilon > 0$, there exists a one-sided $\epsilon$-tester for $P$ whose total number of queries is bounded only by a function of $\epsilon$ 
which is independent of the size of the input graph. This means that the running time of the algorithm is also bounded by a function of
$\epsilon$ only and is independent of the input size. We measure query-complexity by the number of vertices sampled, 
assuming we always examine all edges spanned by them. The infinite removal lemma, Theorem \ref{infiniteremoval}, of Alon and Shapira \cite{AlSh08} shows that every hereditary graph property, 
that is, a graph property closed under taking induced subgraphs, is testable. Many of the best studied graph properties are hereditary. 

If the query complexity of an $\epsilon$-tester is polynomial in $\epsilon^{-1}$, we say that the property is {\it easily testable}. It is an interesting open problem to characterize the 
easily testable hereditary properties. Alon \cite{A02} considered the case where $P=P_{H}$ is the property that the graph does not contain $H$ as a subgraph. He showed that $P_H$ is easily testable if and only if $H$ is bipartite. 
Alon and Shapira \cite{AlSh06} considered the case where $P=P^*_{H}$ is the property that the graph does not contain $H$ as an {\it induced} subgraph. 
They showed that for any graph $H$ except for the path with at most four vertices, the cycle of length four and their complements, the property $P_H^*$ is not easily testable. The problem of determining whether the property $P_H^*$ is easily testable for the path with four vertices or the cycle of length four (or equivalently its complement) was left open. The case where $H$ is a path with four vertices was recently shown to be easily testable by Alon and Fox \cite{AF12}. The case where $H$ is a cycle of length four is still open. Alon and Fox also showed that if $P$ is the family of perfect graphs, then $P$ is not easily testable and, in a certain sense, testing for $P$ is at least as hard as testing triangle-freeness. 

\subsection{Local repairability}

The standard proof of the regularity lemma contains a procedure for turning a graph which is almost triangle-free into a graph which is triangle-free. We simply delete the edges between all vertex sets of low density and between all vertex sets which do not form a regular pair. This procedure can be made more explicit still by using an algorithmic version of the regularity lemma \cite{ADLRY94}. 

A surprising observation of Austin and Tao \cite{AT10} is that this repair procedure can be determined in a local fashion. They show that for any graph $H$ and any $\e > 0$ there exists $\d > 0$ and a natural number $m$ such that if $G$ is a graph containing at most $\d n^{v(H)}$ copies of $H$ then there exists a set $A$ of size at most $m$ such that $G$ may be made $H$-free by removing at most $\e n^2$ edges and the decision of whether to delete a given edge $uv$ may be determined solely by considering the restriction of $G$ to the set $A \cup \{u, v\}$.\footnote{Strictly speaking, Austin and Tao \cite{AT10} consider two forms of local repairability. Here we are considering only the weak version.}

The key point, first observed by Ishigami \cite{Ish}, is that the regular partition can be determined in a local fashion by randomly selecting vertex neighborhoods to create the partition. Since a finite set of points determine the partition, this may in turn be used to create a local modification rule which results in an $H$-free graph.

Similar ideas may also be applied to show that any hereditary graph property, including the property of being induced $H$-free, is locally repairable in the same sense. This again follows from the observation that random neighborhoods can be used to construct the partitions arising in the strong regularity lemma. 

Surprisingly, Austin and Tao show that, even though all hereditary hypergraph properties are testable, there are hereditary properties which are not locally repairable. On the other hand, they show that many natural hypergraph properties, including the property of being $\mathcal{H}$-free, are locally repairable. 


\subsection{Linear hypergraphs}

A {\it linear hypergraph} is a hypergraph where any pair of edges overlap in at most one vertex. For this special class of hypergraphs, it is not necessary to apply the full strength of hypergraph regularity to prove a corresponding removal lemma \cite{KNRS10}. Instead, a straightforward analogue of the usual regularity lemma is sufficient. This results in bounds for $\delta^{-1}$ in the linear hypergraph removal lemma which are of tower-type in a power of $\epsilon^{-1}$. 

While this is already a substantial improvement on general hypergraphs, where the best known bounds are Ackermannian,\footnote{We have already seen two levels of the Ackermann function, the tower function and the wowzer function. Generally, the $k$th level is defined by $A_k(1) = 2$ and $A_k(i+1) = A_{k-1}(A_k(i))$. Taking $A_1(i) = 2^i$, we see that $A_2(i) = T(i)$ and $A_3(i) = W(i)$. The upper bound on $\delta^{-1}$ in the $k$-uniform hypergraph removal lemma  given by the hypergraph regularity proofs are of the form $A_k(\epsilon^{-O(1)})$ or worse.} it can be improved further by using the ideas of \cite{F11}. This results in a bound of the form $T(a_{\mathcal{H}} \log \e^{-1})$.

A similar reduction does not exist for induced removal of linear hypergraphs. Because we need to consider all edges, whether present or not, between the vertices of the hypergraph, we must apply the full strength of the strong hypergraph regularity lemma. This results in Ackermannian bounds.

It is plausible that an extension of the methods of Section \ref{sec:improvedremoval} could be used to give a primitive recursive, or even tower-type, bound for hypergraph removal. We believe that such an improvement would be of great interest, not least because it would give the first primitive recursive bound for the multidimensional extension of Szemer\'edi's theorem. Such an improvement would also be likely to lead to an analogous improvement of the bounds for induced hypergraph removal.

\vspace{1mm}
{\bf Acknowledgements.} The authors would like to thank Noga Alon, Zoltan F\"uredi, Vojta R\"odl and Terry Tao for helpful comments regarding the history of the removal lemma.


\begin{thebibliography}{}


\bibitem{AjSz}
M. Ajtai and E. Szemer\'edi, Sets of lattice points that form no squares, {\it
Stud. Sci. Math. Hungar.} {\bf 9} (1974), 9--11.

\bibitem{A94}
{N. Alon,} {Explicit Ramsey graphs and orthonormal labellings,} {\it Electron. J. Combin.} {\bf 1} (1994), R12, 8pp.

\bibitem{A02}
N. Alon, Testing subgraphs in large graphs, {\it Random Structures Algorithms}
{\bf 21} (2002), 359--370.

\bibitem{ADLRY94} N. Alon, R. A. Duke, H. Lefmann, V. R\"odl and R. Yuster, The
algorithmic aspects of the regularity lemma,
{\it J. Algorithms} {\bf 16} (1994), 80--109.

\bibitem{AFKK} N. Alon, W. Fernandez de la Vega, R. Kannan and M. Karpinski,
Random sampling and approximation of MAX-CSPs, {\it J. Comput. System Sci.}
{\bf 67} (2003), 212--243.

\bibitem{AFKS} N. Alon, E. Fischer, M. Krivelevich and M. Szegedy,
Efficient testing of large graphs, {\it Combinatorica} {\bf 20} (2000),
451--476.


\bibitem{AF12} 
N. Alon and J. Fox, Testing perfectness is hard, submitted. 


\bibitem{AMS} 
N. Alon A. Moitra and B. Sudakov, Nearly complete graphs decomposable into large induced matchings and their applications, submitted.

\bibitem{AlSh04}
{N. Alon and A. Shapira,} {Testing subgraphs in directed graphs,}
{\it J. Comput. System Sci.} {\bf 69} (2004), 353--382. 

\bibitem{AlSh06}
N. Alon and A. Shapira, A characterization of easily testable induced
subgraphs, {\it Combin. Probab. Comput.} {\bf 15} (2006), 791--805.

\bibitem{AlSh08a}
N. Alon and A. Shapira, 
Every monotone graph property is testable, 
in Proc. of STOC 2005, 128--137, {\it SIAM J. Comput.} (Special Issue of STOC'05) {\bf 38} (2008), 505--522. 

\bibitem{AlSh08}
N. Alon and A. Shapira, A characterization of the (natural) graph properties testable with one-sided error, in Proc. of FOCS 2005, 429--438, {\it SIAM J. Comput.} (Special Issue on FOCS '05) {\bf 37} (2008), 1703--1727.




\bibitem{AT10}
{T. Austin and T. Tao,} {Testability and repair of hereditary hypergraph properties,}
{\it Random Structures Algorithms} {\bf 36} (2010), 373--463. 

\bibitem{ARS07}
{C. Avart, V. R\"odl and M. Schacht,} {Every monotone 3-graph property is testable,}
{\it SIAM J. Discrete Math.} {\bf 21} (2007), 73--92. 


\bibitem{BMS12}
{J. Balogh, R. Morris and W. Samotij,} {Independent sets in hypergraphs,} {\it submitted}.


\bibitem{Be46}
{F. Behrend,} {On sets of integers which contain no three terms in arithmetic progression,} {\it Proc. Nat. Acad. Sci.} {\bf 32} (1946), 331--332.


\bibitem{B01}
{B. Bollob\'as,} {\bf Random graphs}, second edition, Cambridge Studies in Advanced Mathematics 73, Cambridge University Press, Cambridge, 2001.

\bibitem{BCLSV} C. Borgs, J. T. Chayes, L. Lov\'asz, V. T. S\'os and K. Vesztergombi, Convergent sequences of
dense graphs I: subgraph frequencies, metric properties and testing, {\it Adv. Math.} {\bf 219} (2008), 1801--1851.

\bibitem{BES73}
{W. G. Brown, P. Erd\H{o}s and V. T. S\'os,} {On the existence of triangulated spheres in 3-graphs, and related problems,}
{\it Period. Math. Hungar.} {\bf 3} (1973), 221--228. 

\bibitem{Can09}
{P. Candela,} {Developments at the interface between combinatorics and Fourier analysis,} {PhD thesis,} {University of Cambridge,} 2009.

\bibitem{C05}
{F. R. K. Chung,} {A spectral Tur\'an theorem,} {\it Combin. Probab. Comput.} {\bf 14} (2005), 755--767.

\bibitem{CGW}
F. R. K. Chung, R. L. Graham and R. M. Wilson, Quasi-random graphs, {\it
Combinatorica} {\bf 9} (1989), 345--362.

\bibitem{CEMcCSz91}
{L. H. Clark, R. C. Entringer, J. E. McCanna and L. A. Sz\'ekely,} {Extremal problems for local properties of graphs,} in
Combinatorial mathematics and combinatorial computing (Palmerston North, 1990), 
{\it Australas. J. Combin.} {\bf 4} (1991), 25--31. 

\bibitem{CF12} 
D. Conlon and J. Fox, Bounds for graph regularity and removal lemmas,  {\it Geom. Funct. Anal.} {\bf 22} (2012), 1192--1256.

\bibitem{CG12}
{D. Conlon and W. T. Gowers,} {Combinatorial theorems in sparse random sets,} {\it submitted.}

\bibitem{CGSS12}
{D. Conlon, W. T. Gowers, W. Samotij and M. Schacht,} {On the K\L R conjecture in random graphs,} {\it submitted}.

\bibitem{CFZ12}
{D. Conlon, J. Fox and Y. Zhao,} {Extremal results in sparse pseudorandom graphs,} {\it submitted.}

\bibitem{DLR}
R. A. Duke,  H. Lefmann and V. R\"odl, A fast approximation algorithm for
computing the frequencies of subgraphs in a given graph, {\it SIAM J. Comput.}
{\bf 24} (1995), 598--620.


\bibitem{Ed77}
{C. S. Edwards,} {A lower bound for the largest number of triangles with a common edge,} 1977, unpublished manuscript.


\bibitem{E87}
{P. Erd\H{o}s,} {Some problems on finite and infinite graphs,} in {Logic and combinatorics (Arcata, Calif., 1985),} 223--228, Contemp. Math. 65, Amer. Math. Soc., Providence, RI, 1987.

\bibitem{E92}
{P. Erd\H{o}s,} {Some of my favourite problems in various branches of combinatorics,}
in Combinatorics 92 (Catania, 1992), {\it Matematiche (Catania)} {\bf 47} (1992), 231--240.

\bibitem{EFR86}
P. Erd\H{o}s, P. Frankl and  V. R\"odl, The asymptotic number of graphs not
containing a fixed subgraph and a problem for hypergraphs having no exponent,
{\it Graphs Combin.} {\bf 2} (1986), 113--121.

\bibitem{ER59}
{P. Erd\H{o}s and A. R\'enyi,} {On random graphs I,} {\it Publ. Math. Debrecen} {\bf 6} (1959), 290--297.

\bibitem{ER60}
{P. Erd\H{o}s and A. R\'enyi,} {On the evolution of random graphs,} {\it Magyar Tud. Akad. Mat. Kutat\'o Int. K\"ozl.} {\bf 5} (1960), 17--61. 

\bibitem{ESi66}
{P. Erd\H{o}s and M. Simonovits,} {A limit theorem in graph theory,} {\it Studia Sci. Math. Hungar.} {\bf 1} (1966), 51--57.

\bibitem{ES46}
{P. Erd\H{o}s and A. H. Stone,} {On the structure of linear graphs,} {\it Bull. Amer. Math. Soc.} {\bf 52} (1946), 1087--1091.


\bibitem{FNRR} E. Fischer, I. Newman, S. Raskhodnikova, R. Rubinfeld and A. Samorodnitsky, Monotonicity
testing over general poset domains, in Proceedings of the 2002 ACM Symposium on Theory of Computing, 474--483, ACM, New York, 2002.
 
\bibitem{F11}
J. Fox, A new proof of the graph removal lemma, {\it Ann. of Math.} {\bf 174} (2011), 561--579.

\bibitem{FL12}
{J. Fox and P. Loh,} {On a problem of Erd\H{o}s and Rothschild on edges in triangles,} to appear in {\it Combinatorica}.

\bibitem{FF87}
{P. Frankl and Z. F\"uredi,} {Exact solution of some Tur\'an-type problems,}
{\it J. Combin. Theory Ser. A} {\bf 45} (1987), 226--262.

\bibitem{FGR87}
{P. Frankl, R. L. Graham and V. R\"odl,} {On subsets of abelian groups with no $3$-term arithmetic progression,}
{\it J. Combin. Theory Ser. A} {\bf 45} (1987), 157--161.


\bibitem{FR02}
{P. Frankl and V. R\"odl,} {Extremal problems on set systems,}
{\it Random Structures Algorithms} {\bf 20} (2002), 131--164. 

\bibitem{FRS10}
{E. Friedgut, V. R\"{o}dl and M. Schacht,} {Ramsey properties of discrete random structures}, {\it Random Structures Algorithms},
{\bf 37} (2010), 407--436.

\bibitem{FrKa}
A. Frieze and R. Kannan, The regularity lemma and approximation schemes for
dense problems, {\it Proceedings of the 37th IEEE FOCS} (1996), 12--20.

\bibitem{FrKa1}
A. Frieze and R. Kannan, Quick approximation to matrices and applications, {\it
Combinatorica} {\bf 19} (1999), 175--220.

\bibitem{Fu92}
{Z. F\"uredi,} {The maximum number of edges in a minimal graph of diameter $2$,}
{\it J. Graph Theory} {\bf 16} (1992), 81--98. 

\bibitem{Fu95}
{Z. F{\"u}redi,} {Extremal hypergraphs and combinatorial geometry}, {in Proceedings of the {I}nternational {C}ongress of {M}athematicians, {V}ol.\ 1, 2 ({Z}\"urich, 1994)},
{1343--1352}, {Birkh\"auser}, {Basel}, {1995}.

\bibitem{F77}
{H. Furstenberg,} {Ergodic behavior of diagonal measures and a theorem of Szemer\'edi on arithmetic progressions,}
{\it J. Analyse Math.} {\bf 31} (1977), 204--256.

\bibitem{FK78}
{H. Furstenberg and Y. Katznelson,} {An ergodic Szemer\'edi theorem for commuting transformations,} 
{\it J. Analyse Math.} {\bf 34} (1978), 275--291. 

\bibitem{FKO82}
{H. Furstenberg, Y. Katznelson and D. Ornstein,} {The ergodic theoretical proof of Szemer\'edi's theorem,}
{\it Bull. Amer. Math. Soc.} {\bf 7} (1982), 527--552. 

\bibitem{GGR}
O. Goldreich, S. Goldwasser and D. Ron, Property testing and its applications
to learning and approximation, {\it J. ACM} {\bf 45} (1998), 653--750.

\bibitem{G97} W. T. Gowers, Lower bounds of tower type for Szemer\'edi's
uniformity lemma, {\it Geom. Funct. Anal.} {\bf 7} (1997), 322--337.

\bibitem{G98}
{W. T. Gowers,} {A new proof of Szemer\'edi's theorem for arithmetic progressions of length four,}
{\it Geom. Funct. Anal.} {\bf 8} (1998), 529--551. 

\bibitem{G01}
{W.T. Gowers,} {A new proof of Szemer\'edi's theorem,} {\it Geom. Funct. Anal.}
{\bf 11} (2001), 465--588.

\bibitem{G06}
{W.T. Gowers,} {Quasirandomness, counting and regularity for 3-uniform hypergraphs,}
{\it Combin. Probab. Comput.} {\bf 15} (2006), 143--184.

\bibitem{G07}
{W.T. Gowers,} {Hypergraph regularity and the multidimensional
Szemer\'edi theorem,} {\it Ann. of Math.} {\bf 166} (2007),
897--946.


\bibitem{G05}
{B. Green,} {A Szemer\'edi-type regularity lemma in abelian groups, with applications,} 
{\it Geom. Funct. Anal.} {\bf 15} (2005), 340--376.



\bibitem{HKL96}
{P. E. Haxell, Y. Kohayakawa and T.  \L uczak,} {Tur\'an's extremal problem in random graphs: forbidding odd cycles,}
{\it Combinatorica} {\bf 16} (1996), 107--122. 


\bibitem{Ish}
{Y. Ishigami,} {A simple regularization of hypergraphs,} {\it submitted}.

\bibitem{JLR00}
{S. Janson, T. \L uczak and A. Ruci\'nski,} {\bf Random graphs,} Wiley-Interscience Series in Discrete Mathematics and Optimization, Wiley-Interscience, New York, 2000.

\bibitem{KSh12}
{S. Kalyanasundaram and A. Shapira,} {A wowzer-type lower bound for the strong regularity lemma,}
{\it Proc. London Math. Soc.,} to appear.

\bibitem{KN79}
{N. Khad\v{z}iivanov and V. Nikiforov,} {Solution of a problem of P. Erd\H{o}s about the maximum number of triangles with a common edge in a graph,} {\it C. R. Acad. Bulgare Sci.} {\bf 32} (1979), 1315--1318.

\bibitem{K97}
{Y. Kohayakawa,} {Szemer\'edi's regularity lemma for sparse
graphs,} in {Foundations of computational mathematics} (Rio de
Janeiro, 1997), {Springer, Berlin}, 1997, 216--230.

\bibitem{KLR96}
{Y. Kohayakawa, T. \L uczak and V. R\"odl,} {Arithmetic progressions of length three in subsets of a random set,} {\it Acta Arith.} {\bf 75} (1996), 133--163.

\bibitem{KLR97}
{Y. Kohayakawa, T. \L uczak and V. R\"odl,} {On $K^4$-free subgraphs of random graphs,}
{\it Combinatorica} {\bf 17} (1997), 173--213.

\bibitem{KNRS10}
{Y. Kohayakawa, B. Nagle, V. R\"odl and M. Schacht,} {Weak regularity and linear hypergraphs,} {\it J. Combin. Theory Ser. B} {\bf 100} (2010), 151--160.

\bibitem{KNRSS05}
{Y. Kohayakawa, B. Nagle, V. R\"odl, J. Skokan and M. Schacht,} {The hypergraph regularity method and its applications,}
{\it Proc. Natl. Acad. Sci. USA} {\bf 102} (2005), 8109--8113. 


\bibitem{KRSS10}
{Y. Kohayakawa, V. R\"odl, M. Schacht and J. Skokan,} {On the triangle removal lemma for subgraphs of sparse pseudorandom graphs,} {in An Irregular Mind (Szemer\'edi is 70),} {Bolyai Society Math. Studies 21,} {Springer,} 2010, 359--404.



\bibitem{KSV08}
{D. Kr\'al', O. Serra and L. Vena,} {A removal lemma for linear systems over finite fields,} in Sixth conference on discrete mathematics and computer science (Spanish), 417--423, Univ. Lleida, Lleida, 2008.

\bibitem{KSV09}
{D. Kr\'al', O. Serra and L. Vena,} {A combinatorial proof of the removal lemma for groups,}
{\it J. Combin. Theory Ser. A} {\bf 116} (2009), 971--978. 

\bibitem{KSV12}
{D. Kr\'al', O. Serra and L. Vena,} {A removal lemma for systems of linear equations over finite fields,}
{\it Israel J. Math.} {\bf 187} (2012), 193--207.

\bibitem{KSV122}
{D. Kr\'al', O. Serra and L. Vena,} {On the removal lemma for linear systems over abelian groups,} {\it European J. Combin.} {\bf 34} (2013), 248--259.

\bibitem{KrSu}
M. Krivelevich and B. Sudakov, Pseudo-random graphs, in {More sets, graphs and
numbers,} Bolyai Soc. Math. Stud. 15, Springer, Berlin, 2006, 199--262.

\bibitem{LS07}
L. Lov\'asz and B. Szegedy, Szemer\'edi's lemma for the analyst, {\it Geom.
Funct. Anal.} {\bf 17} (2007), 252--270.

\bibitem{LS10}
{L. Lov\'asz and B. Szegedy,} {Testing properties of graphs and functions,}
{\it Israel J. Math.} {\bf 178} (2010), 113--156. 

\bibitem{L06}
{T. \L uczak,} {Randomness and regularity,} in International Congress of Mathematicians, Vol. III, 899--909, Eur. Math. Soc., Z\"urich, 2006. 


\bibitem{NR03}
{B. Nagle and V. R\"odl,} {Regularity properties for triple systems,}
{\it Random Structures Algorithms} 23 (2003), 264--332. 

\bibitem{NRS06}
{B. Nagle, V. R\"{o}dl and M. Schacht,} {The counting lemma for
regular $k$-uniform hypergraphs,} {\it Random Structures Algorithms}
{\bf 28} (2006), 113--179.

\bibitem{PRR}
Y. Peng, V. R\"odl, and A. Ruci\'nski, Holes in graphs, {\it Electron. J.
Combin.} {\bf 9} (2002), R1, 18pp.

\bibitem{PRS05}
{Y. Peng, V. R\"odl and J. Skokan,}
{Counting small cliques in 3-uniform hypergraphs,}
{\it Combin. Probab. Comput.} {\bf 14} (2005), 371--413. 

\bibitem{polymath} 
{D. H. J. Polymath,} 
{A new proof of the density Hales-Jewett theorem,} 
{\it Ann. of Math.} {\bf 175} (2012), 1283--1327.

\bibitem{R30}
{F. P. Ramsey,} {On a problem of formal logic,} {\it Proc. London Math. Soc.} {\bf 30} (1930), 264--286.

\bibitem{RR93}
{V. R\"odl and A. Ruci\'nski,} {Lower bounds on probability thresholds for Ramsey properties,} in {Combinatorics, Paul Erd\H{o}s is eighty, Vol. 1,} 317--346, Bolyai Soc. Math. Stud., J\'anos Bolyai Math. Soc., Budapest, 1993.

\bibitem{RR95}
{V. R\"odl and A. Ruci\'nski,} {Threshold functions for Ramsey properties,} {\it J. Amer. Math. Soc.} {\bf 8} (1995), 917--942.

\bibitem{RS07} V. R\"odl and M. Schacht, Regular partitions of hypergraphs:
regularity lemmas, {\it Combin. Probab. Comput.} {\bf 16} (2007), 833--885.

\bibitem{RS09}
{V. R\"odl and M. Schacht,} {Generalizations of the removal lemma,}
{\it Combinatorica} {\bf 29} (2009), 467--501. 

\bibitem{RS10} V. R\"odl and M. Schacht, Regularity lemmas for graphs,
in Fete of Combinatorics and Computer Science, Bolyai Soc. Math.
Stud. 20, Springer, 2010, 287--325.

\bibitem{RSk04}
{V. R\"odl and J. Skokan,} {Regularity lemma for uniform hypergraphs,} {\it Random Structures Algorithms} {\bf 25}  (2004), 1--42.

\bibitem{RSk05}
{V. R\"odl and J. Skokan,} {Counting subgraphs in quasi-random 4-uniform hypergraphs,}
{Random Structures Algorithms} {\bf 26} (2005), 160--203. 

\bibitem{RSk06}
{V. R\"odl and J. Skokan,} {Applications of the regularity lemma for uniform hypergraphs,}
{\it Random Structures Algorithms} {\bf 28} (2006), 180--194. 

\bibitem{Ro}
K. F. Roth, On certain sets of integers, {\it J. London Math. Soc.} {\bf 28}
(1953), 104--109.

\bibitem{RuSu}
R. Rubinfield and M. Sudan, Robust characterization of polynomials with applications to program
testing, {\it SIAM J. Comput.} {\bf 25} (1996), 252--271.

\bibitem{RuSz}
I. Z. Ruzsa and E. Szemer\'edi, Triple systems with no six points carrying
three triangles, in
Combinatorics (Keszthely, 1976), Coll. Math. Soc. J. Bolyai 18, Volume II,
939--945.

\bibitem{Sj12}
{W. Samotij,} {Stability results for random discrete structures,} to appear in {\it Random Structures Algorithms}.

\bibitem{San11}
{T. Sanders,} {On Roth's theorem on progressions,}
{\it Ann. of Math.} {\bf 174} (2011), 619--636.

\bibitem{San12}
{T. Sanders,} {On the Bogolyubov-Ruzsa lemma,} {\it Anal. PDE}, to appear.

\bibitem{ST12}
{D. Saxton and A. Thomason,} {Hypergraph containers,} {\it submitted}.

\bibitem{S12}
{M. Schacht,} {Extremal results for random discrete structures,}
{\it submitted}.

\bibitem{SchShk}
{T. Schoen and I. Shkredov,} {Roth's theorem in many variables,} {\it submitted}.

\bibitem{Sh09}
{A. Shapira,} {Green's conjecture and testing linear-invariant properties,} in Proceedings of the 2009 ACM International Symposium on Theory of Computing, 159--166, ACM, New York, 2009. 

\bibitem{Sh10}
{A. Shapira,} {A proof of Green's conjecture regarding the removal properties of sets of linear equations,}
{\it J. London Math. Soc.} {\bf 81} (2010),  355--373. 


\bibitem{Shk06}
{I. Shkredov,} {On a generalization of Szemer\'edi's theorem,} 
{\it Proc. London Math. Soc.} {\bf 93} (2006), 723--760. 

\bibitem{Si68}
{M. Simonovits,} {A method for solving extremal problems in graph theory, stability problems,} in Theory of graphs (Proc. Colloq. Tihany,1966), Academic Press, New York, 1968, 279--319.

\bibitem{So} J. Solymosi, Note on a generalization of Roth's theorem, in
Discrete and computational geometry, Algorithms Combin. Vol. 25, Springer, 2003, 825--827.

\bibitem{So04}
{J. Solymosi,} {A note on a question of Erd\H{o}s and Graham,}
{\it Combin. Probab. Comput.} {\bf 13} (2004), 263--267. 

\bibitem{So05}
{J. Solymosi,} {Regularity, uniformity, and quasirandomness,}
{\it Proc. Natl. Acad. Sci. USA} {\bf 102} (2005), 8075--8076

\bibitem{SSV05}
{B. Sudakov, T. Szab\'o and V.H. Vu,} {A generalization of Tur\'an's theorem,} {\it J. Graph Theory} {\bf 49} (2005), 187--195.

\bibitem{Sz10}
{B. Szegedy,} {The symmetry preserving removal lemma,}
{\it Proc. Amer. Math. Soc.} {\bf 138} (2010), 405--408.


\bibitem{Sz1} E. Szemer\'edi, Integer sets containing no $k$ elements in
arithmetic progression, {\it Acta
Arith.} {\bf 27} (1975), 299--345.

\bibitem{Sz76} E. Szemer\'edi, Regular partitions of graphs, in Colloques
Internationaux CNRS 260 - Probl\`emes
Combinatoires et Th\'eorie des Graphes, Orsay, (1976), 399--401.

\bibitem{T06} T. Tao, Szemer\'edi's regularity lemma revisited, {\it Contrib.
Discrete Math.} {\bf 1} (2006), 8--28.

\bibitem{T062}
{T. Tao,} {A variant of the hypergraph removal lemma,}
{\it J. Combin. Theory Ser. A} {\bf 113} (2006), 1257--1280. 


\bibitem{Th}
A. Thomason, Pseudorandom graphs, in {Random graphs '85} (Pozna\'n, 1985), 307--331, 
North-Holland Math. Stud. 144, North-Holland, Amsterdam, 1987.

\bibitem{Th2}
A. Thomason, Random graphs, strongly regular graphs and pseudorandom graphs, Surveys in combinatorics 1987 (New Cross, 1987), 173--195, 
London Math. Soc. Lecture Note Ser. 123, Cambridge Univ. Press, Cambridge, 1987. 

\bibitem{T41}
{P. Tur\'an,} {Eine Extremalaufgabe aus der Graphentheorie,} {\it Mat. Fiz. Lapok} {\bf 48} (1941), 436--452.

\end{thebibliography}
\end{document}